\pgfplotsset{compat=1.15}
\newcommand*{\mailto}[1]{\href{mailto:#1}{\nolinkurl{#1}}}
\newcommand{\arxiv}[1]{\href{http://arxiv.org/abs/#1}{arXiv:#1}}
\newcommand{\R}{{\mathbb R}}
\newcommand{\N}{{\mathbb N}}
\newcommand{\Z}{{\mathbb Z}}
\newcommand{\C}{{\mathbb C}}
\newcommand{\bbC}{{\mathbb{C}}}
\newcommand{\bbN}{{\mathbb{N}}}
\newcommand{\bbP}{{\mathbb{P}}}
\newcommand{\bbR}{{\mathbb{R}}}
\newcommand{\cA}{{\mathcal A}}
\newcommand{\cB}{{\mathcal B}}
\newcommand{\cF}{{\mathcal F}}
\newcommand{\cH}{{\mathcal H}}
\newcommand{\cS}{{\mathcal S}}
\newcommand{\beq}{\begin{align}}
\newcommand{\enq}{\end{align}}
\renewcommand{\a}{\alpha}
\renewcommand{\b}{\beta}
\newcommand{\g}{\gamma}
\DeclareMathOperator{\dom}{dom}
\DeclareMathOperator{\tr}{tr}
\DeclareMathOperator{\Ai}{Ai}
\DeclareMathOperator{\Bi}{Bi}
\DeclareMathOperator{\erf}{erf}
\newcommand{\Res}{\text{\rm Res}}
\renewcommand{\Re}{\text{\rm Re}}
\renewcommand{\Im}{\text{\rm Im}}
\renewcommand{\ln}{\text{\rm ln}}
\newcommand{\no}{\notag}
\newcommand{\lb}{\label}
\newcommand{\f}{\frac}
\newcommand{\ol}{\overline}
\newcommand{\wti}{\widetilde}
\newcommand{\Oh}{O}
\newcommand{\oh}{o}
\newcommand{\dott}{\,\cdot\,}
\renewcommand{\dot}{\overset{\textbf{\large.}}}
\renewcommand{\ddot}{\overset{\textbf{\large.\large.}}}
\newcommand{\bi}{\bibitem}
\renewcommand{\le}{\leqslant}
\let\geq\geqslant
\let\leq\leqslant
\newcommand{\al}{\a}
\newcommand{\be}{\b}
\newcommand{\Lr}{{L^2((a,b);rdx)}} 
\newcommand{\AC}{{AC([a,b])}}
\newcommand{\ACl}{{AC_{loc}((a,b))}}
\def\theequation{\@arabic\c@equation}
\numberwithin{equation}{section}
\newtheorem{theorem}{Theorem}[section]
\newtheorem{corollary}[theorem]{Corollary}
\newtheorem{hypothesis}[theorem]{Hypothesis}
\newtheorem{example}[theorem]{Example}
\theoremstyle{remark}
\newtheorem{remark}[theorem]{Remark}
\begin{document}

\title[Spectral $\zeta$-function for Regular Sturm--Liouville Operators]{Spectral $\boldsymbol{\zeta}$-Functions and $\boldsymbol{\zeta}$-Regularized Functional Determinants for Regular Sturm--Liouville Operators} 

\author[G.\ Fucci]{Guglielmo Fucci}
\address{Department of Mathematics, 
East Carolina University, 331 Austin Building, East Fifth Street,
Greenville, NC 27858-4353, USA}
\email{fuccig@ecu.edu}
\urladdr{http://myweb.ecu.edu/fuccig/}

\author[F.\ Gesztesy]{Fritz Gesztesy}
\address{Department of Mathematics, 
Baylor University, Sid Richardson Bldg., 1410 S.~4th Street, Waco, TX 76706, USA}
\email{Fritz$\_$Gesztesy@baylor.edu}
\urladdr{http://www.baylor.edu/math/index.php?id=935340}

\author[K.\ Kirsten]{Klaus Kirsten}
\address{Department of Mathematics, 
Baylor University, Sid Richardson Bldg., 1410 S.~4th Street, Waco, TX 76706, USA, and Mathematical
Reviews, American Mathematical Society, 416 4th Street, Ann Arbor, MI 48103, USA}
\email{Klaus$\_$Kirsten@baylor.edu}
\urladdr{http://www.baylor.edu/math/index.php?id=54012}

\author[J.\ Stanfill]{Jonathan Stanfill}
\address{Department of Mathematics, 
Baylor University, Sid Richardson Bldg., 1410 S.~4th Street,
Waco, TX 76706, USA}
\email{Jonathan$\_$Stanfill@baylor.edu}
\urladdr{http://sites.baylor.edu/jonathan-stanfill/}


\date{\today}
\thanks{Res. Math. Sci. {\bf 8}, No.~61 (2021), 46 pp.} 
\@namedef{subjclassname@2020}{\textup{2020} Mathematics Subject Classification}
\subjclass[2020]{Primary: 47A10, 47B10, 47G10. Secondary: 34B27, 34L40.}
\keywords{$\zeta$-function, Sturm--Liouville operators, Traces, (modified) Fredholm determinants, zeta regularized functional determinants.}

\begin{abstract} 
The principal aim in this paper is to employ a recently developed unified approach to the computation of traces of resolvents and $\zeta$-functions to efficiently compute values of spectral $\zeta$-functions at positive integers associated to regular (three-coefficient) self-adjoint Sturm--Liouville differential expressions $\tau$. Depending on the underlying boundary conditions, we express the $\zeta$-function values in terms of a fundamental system of solutions of $\tau y = z y$ and their expansions about the spectral point $z=0$. Furthermore, we give the full analytic continuation of the $\zeta$-function through a Liouville transformation and provide an explicit expression for the $\zeta$-regularized functional determinant in terms of a particular set of this fundamental system of solutions.

An array of examples illustrating the applicability of these methods is provided, including regular Schr\"{o}dinger operators with zero, piecewise constant, and a linear potential on a compact interval.
\end{abstract}

\maketitle

\tableofcontents



\section{Introduction} \lb{s1} 

The principal motivation for this paper is to illustrate how a recently developed unified approach to the computation of Fredholm determinants, traces of resolvents, and $\zeta$-functions in \cite{GK19} can be used to efficiently compute certain values of spectral $\zeta$-functions associated to regular Sturm--Liouville operators as well as give the full analytic continuation of the $\zeta$-function through a Liouville transformation and finally provide an explicit expression for the $\zeta$-regularized functional determinant.

In Section \ref{s2} we begin by outlining the background for regular self-adjoint Sturm--Liouville operators on bounded intervals, that is, operators in $L^2((a,b);rdx)$ with separated and coupled boundary conditions and the associated spectral $\zeta$-functions. Under appropriate hypotheses on the Sturm--Liouville operator associated with three-coefficient differential expressions of the type $\tau=r^{-1}[-(d/dx)p(d/dx)+q]$, certain values of the spectral $\zeta$-function can be found via complex contour integration techniques to be equal to residues of explicit functions involving a canonical system of fundamental solutions $\phi(z,\dott,a)$ and $\theta(z,\dott,a)$ of $\tau y=zy$ for separated or coupled boundary conditions. Moreover, the zeros with respect to the parameter $z$ of $\phi$, $\theta$, and some of their (boundary condition dependent) linear combinations are precisely the eigenvalues corresponding to the underlying operator, including multiplicity.

In Section \ref{s3} we provide a series expansion for $\phi(z,\dott,a)$ and $\theta(z,\dott,a)$ about $z=0$ using the Volterra integral equation associated with the general three-coefficient regular self-adjoint Sturm--Liouville operator. This method leads to an expansion in powers of $z$ of the fundamental solutions and their $z$-derivative involving their values at $z=0$ and the appropriate Volterra Green's function. We also investigate the $|z|\to\infty$ asymptotic expansion of the characteristic function appearing in the complex integral representation of the spectral $\zeta$-function given in Section \ref{s2}. This asymptotic expansion is then exploited in order to construct the analytic continuation of the spectral $\zeta$-function and to obtain an explicit expression for the zeta regularized functional determinant.

Section \ref{s4} contains the main theorems that allow for the calculation of the values of spectral $\zeta$-functions of general regular Sturm--Liouville operators on bounded intervals as ratios of series expansions of (boundary condition dependent) solutions
of $\tau y = z y$ about $z=0$. In particular, we consider separated boundary conditions when zero is not an eigenvalue, or, when it is (necessarily) a simple eigenvalue, and coupled boundary conditions when either zero is not an eigenvalue, or, an eigenvalue of multiplicity (necessarily) at most two. (For more details in this context see \cite{GK19} as well as \cite[Ch. 3]{GZ21}, \cite[Sect. 8.4]{We80}, \cite[Sect. 13.2]{We03}, and \cite[Ch. 4]{Ze05}.)

We continue by providing some examples in Section \ref{s5} illustrating the main theorems and corollaries of Section \ref{s4} and the zeta regularized functional determinant given in Section \ref{s3}. In particular, we present the case of  Schr\"odinger operators with zero potential imposing Dirichlet, Neumann, periodic, antiperiodic, and Krein--von Neumann  boundary conditions. We then consider positive (piecewise) constant and negative constant potentials for Dirichlet boundary conditions, and finally the case of a linear potential.

Here we summarize some of the basic notation used in this manuscript. If $A$ is a linear operator mapping (a subspace of) a Hilbert space into another, then $\dom(A)$ and $\ker(A)$ denote the domain and the kernel (i.e., null space) of $A$. The spectrum, point spectrum, and resolvent set of a closed linear operator in a separable complex Hilbert space, $\cH$, will be denoted by $\sigma(\dott),\ \sigma_p(\dott),$ and $\rho(\dott)$ respectively. If $S$ is self-adjoint in $\cH$, the multiplicity of an eigenvalue $z_0\in\sigma_p(S)$ is denoted $m(z_0;S)$ (the geometric and algebraic multiplicities of $S$ coincide in this case). The proper setting for our investigations is the Hilbert space $L^2((a,b);rdx)$, which we will occasionally abbreviate as $L^2_r((a,b))$. The spectral $\zeta$-function of a self-adjoint linear operator $S$ is denoted 
by $\zeta(s;S)$. In addition, ${\tr}_{\cH}(T)$ denotes the trace of a trace class operator $T \in\cB_1(\cH)$ and 
$\det_\cH(I_{\cH} - T)$ the Fredholm determinant of $I_{\cH} - T$. 

For consistency of notation, throughout this manuscript we will follow the conventional notion that derivatives annotated with superscripts are understood as with respect to $x$ and derivatives with respect to $\xi $ will be abbreviated by 
$\dot\ =d/d \xi$. We also employ the notation $\bbN_0= \bbN \cup \{0\}$.

\section{Background on Self-Adjoint Regular Sturm--Liouville Operators} \lb{s2} 

In the first part of this section we briefly recall basic facts on 
regular Sturm--Liouville operators and their self-adjoint boundary conditions. This material is standard and well-known, hence we just refer to some of the standard monographs on this subject, such as, 
\cite[Sect.~6.3]{BHS20}, \cite[Ch.~3]{GZ21}, \cite[Sect.~II.5]{JR76}, 
\cite[Ch.~V]{Na68}, \cite[Sect.~8.4]{We80}, \cite[Sect.~13.2]{We03}, \cite[Ch.~4]{Ze05}. In the second part we discuss Fredholm determinants, traces of resolvents, and spectral $\zeta$-functions associated with these regular Sturm--Liouville problems. For background as well as relevant material in this context we refer to \cite{Am20}, \cite{Aw18}, 
\cite{BFK95}, \cite{BF60}, \cite{DU11}, \cite{Di61}, \cite{DD78}, \cite{FFG17}, \cite{Fo87}, \cite{Fo92}, \cite{FL19}, \cite{FL20}, \cite{FGK15}, \cite{GK19}, \cite{GKM-AS18}, \cite{HS18}, \cite{JP51}, \cite{Le98}, \cite{LT98}, \cite{LV11}, \cite{LS77}, \cite{MS18}, \cite{Mu98}, \cite{MCKB15}, \cite{OY12}, \cite{RS97}, \cite{Sp20}, \cite[Sects.~5.4, 5.5, 6.3]{Ta08}, \cite{Ve18}.

Throughout our discussion of regular Sturm--Liouville operators we make the following assumptions:

\begin{hypothesis} \lb{h2.1}
Let $(a,b) \subset \bbR$ be a finite interval and suppose that $p,q,r$ are $($Lebesgue\,$)$ measurable functions on $(a,b)$  
such that the following items $(i)$--$(iii)$ hold: \\[1mm] 
$(i)$ $r > 0$ a.e.~on $(a,b)$, $r \in L^1((a,b);dx)$. \\[1mm]
$(ii)$ $p > 0$ a.e.~on $(a,b)$, $1/p \in L^1((a,b);dx)$. \\[1mm]
$(iii)$ $q$ is real-valued a.e.~on $(a,b)$, $q \in L^1((a,b);dx)$.  
\end{hypothesis}

Given Hypothesis \ref{h2.1}, we now study Sturm--Liouville operators associated with the general, 
three-coefficient differential expression $\tau$ of the type,
\begin{align}\lb{2.1}
\tau=\f{1}{r(x)}\left[-\f{d}{dx}p(x)\f{d}{dx} + q(x)\right] \, \text{ for a.e.~$x\in(a,b) \subseteq \R$.} 
\end{align} 

We start with the notion of minimal and maximal $\Lr$-realizations associated with the regular differential 
expression $\tau$ on the finite interval $(a,b) \subset \bbR$.  Here, and elsewhere throughout this manuscript, the inner product in $\Lr$ is defined by
\begin{align}
(f,g)_{\Lr} = \int_a^b r(x) dx \, \ol{f(x)}g(x), \quad f,g\in \Lr.
\end{align}

Assuming Hypothesis \ref{h2.1}, the differential expression $\tau$ of the form \eqref{2.1} on the finite interval 
$(a,b) \subset \bbR$ is called {\it regular on} $[a,b]$. The corresponding \textit{maximal operator} $T_{max}$ 
in $\Lr$ associated with $\tau$ is defined by 
\begin{align}
&T_{max} f = \tau f,     \no 
\\
& f \in \dom(T_{max})=\big\{g\in\Lr  \, \big| \,  g,g^{[1]}\in\AC; \\
& \hspace*{6cm} \tau g\in\Lr\big\},   \no 
\end{align}
and the corresponding \textit{minimal operator} $T_{min}$ in $\Lr$ associated with $\tau$ is given by 
\begin{align}
&T_{min} f = \tau f, \no 
\\
& f \in \dom(T_{min})=\big\{g\in\Lr  \, \big| \,  g,g^{[1]}\in\AC; \\ 
&\hspace*{3cm} g(a)=g^{[1]}(a)=g(b)=g^{[1]}(b)=0; \; \tau g\in\Lr\big\}.  \no  
\end{align}

Here (with $\prime := d/dx$)
\begin{align}
y^{[1]}(x) = p(x) y'(x),
\end{align}
denotes the first quasi-derivative of a function $y$ on $(a,b)$, assuming that $y, py' \in AC_{loc}((a,b))$. 

Assuming Hypothesis \ref{h2.1} so that $\tau$ is regular on $[a,b]$, the following is well-known (see, e.g., 
\cite[Sect.~6.3]{BHS20}, \cite[Sect.~3.2]{GZ21}, \cite[Sect.~II.5]{JR76}, \cite[Ch.~V]{Na68}, 
\cite[Sect.~8.4]{We80}, 
\cite[Sect.~13.2]{We03}, \cite[Ch.~4]{Ze05}): $T_{min}$ is a densely defined, closed operator in $\Lr$, moreover, $T_{max}$ is densely defined and closed in $\Lr$, and 
\begin{align}
T_{min}^* = T_{max}, \quad T_{min} = T_{max}^*.
\end{align}
Moreover, $T_{min} \subset T_{max} = T_{min}^*$, and hence $T_{min}$ is symmetric, while 
$T_{max}$ is not. 

The next theorem describes all self-adjoint extensions of $T_{min}$ (cf., e.g., \cite[Sect.~13.2]{We03}, \cite[Ch.~4]{Ze05}).

\begin{theorem}\lb{t2.2}
Assume Hypothesis \ref{h2.1} so that $\tau$ is regular on $[a,b]$. Then the following items $(i)$--$(iii)$ hold: \\[1mm] 
$(i)$ All self-adjoint extensions $T_{\al,\be}$ of $T_{min}$ with separated boundary conditions are of the form
\begin{align}
& T_{\al,\be} f = \tau f, \quad \al,\be\in[0,\pi),   \no  \\
& f \in \dom(T_{\al,\be})=\big\{g\in\dom(T_{max}) \, \big| \, g(a)\cos(\al)+g^{[1]}(a)\sin(\al)=0;  \lb{2.7} \\ 
& \hspace*{5.6cm} g(b)\cos(\be)-g^{[1]}(b)\sin(\be) = 0 \big\}.    \no  
\end{align}
Special cases: $\al=0$ $($i.e., $g(a)=0$$)$ is called the Dirichlet boundary condition at $a$; $\al=\f\pi2$,  
$($i.e., $g^{[1]}(a)=0$$)$ is called the Neumann boundary condition at $a$ $($analogous facts hold at the 
endpoint $b$$)$. \\[1mm]
$(ii)$ All self-adjoint extensions $T_{\varphi,R}$ of $T_{min}$ with coupled boundary conditions are of the type
\begin{align}
\begin{split} 
& T_{\varphi,R} f = \tau f,    \\
& f \in \dom(T_{\varphi,R})=\bigg\{g\in\dom(T_{max}) \, \bigg| \begin{pmatrix}g(b)\\g^{[1]}(b)\end{pmatrix} 
= e^{i\varphi}R \begin{pmatrix}
g(a)\\g^{[1]}(a)\end{pmatrix} \bigg\},     \lb{2.8} 
\end{split}
\end{align}
where $\varphi\in[0,\pi)$, and $R$ is a real $2\times2$ matrix with $\det(R)=1$ 
$($i.e., $R \in SL(2,\bbR)$$)$.
Special cases: $\varphi = 0$, $R=I_2$ $($i.e., $g(b)=g(a)$, $g^{[1]}(b)=g^{[1]}(a)$$)$ are called {\it periodic boundary conditions}; 
similarly, $\varphi = 0$, $R= - I_2$ $($i.e., $g(b)=-g(a)$, $g^{[1]}(b)=-g^{[1]}(a)$$)$ are called {\it antiperiodic boundary conditions}. 
\\[1mm] 
$(iii)$ Every self-adjoint extension of $T_{min}$ is either of type $(i)$ $($i.e., separated\,$)$ or of type 
$(ii)$ $($i.e., coupled\,$)$.
\end{theorem}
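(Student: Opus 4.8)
The plan is to place the problem inside the standard theory of symmetric operator extensions and to reduce it, via Lagrange's identity, to a finite-dimensional classification of admissible boundary conditions. First I would record Green's formula: for $f,g\in\dom(T_{max})$, integrating by parts twice in \eqref{2.1} gives
\[
(T_{max}f,g)_{\Lr} - (f,T_{max}g)_{\Lr} = [f,g](b) - [f,g](a), \qquad [f,g](x) := \overline{f(x)}\,g^{[1]}(x) - \overline{f^{[1]}(x)}\,g(x).
\]
Since $\tau$ is regular on $[a,b]$ by Hypothesis \ref{h2.1}, for every $z\in\bbC$ both solutions of $\tau y = zy$ belong to $\Lr$, so $\dim\ker(T_{max}-z)=2$ for $z\in\bbC\setminus\bbR$; hence $T_{min}$ has deficiency indices $(2,2)$ and self-adjoint extensions exist. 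Because $\dom(T_{max})/\dom(T_{min})$ is four-dimensional with the boundary values furnishing coordinates, every self-adjoint $T$ obeys $T_{min}\subseteq T\subseteq T_{max}$ and is the restriction of $T_{max}$ to a maximal subspace on which the boundary form $[f,g](b)-[f,g](a)$ vanishes.

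Next I would invoke the Glazman--Krein--Naimark characterization. Writing $Y_a(g)=(g(a),g^{[1]}(a))^\top$ and $Y_b(g)=(g(b),g^{[1]}(b))^\top$, the self-adjoint extensions are exactly the restrictions of $T_{max}$ to domains
\[
\dom(T)=\big\{g\in\dom(T_{max}) : A\,Y_a(g) + B\,Y_b(g) = 0\big\},
\]
where $(A\mid B)$ is a $2\times4$ matrix of rank two satisfying the self-adjointness relation $A\,\cJ\,A^* = B\,\cJ\,B^*$ with $\cJ=\begin{pmatrix}0&1\\-1&0\end{pmatrix}$; in vector form the boundary form reads $\overline{Y_b(f)}^\top\cJ\,Y_b(g)-\overline{Y_a(f)}^\top\cJ\,Y_a(g)$, and this relation is precisely the requirement that it vanish on $\dom(T)$. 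The problem thus becomes the classification, up to left multiplication by invertible $2\times2$ matrices (which does not alter the solution set), of all admissible pairs $(A,B)$.

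I would then split according to the invertibility of the blocks. When the conditions can be arranged so that one row involves only $Y_a$ and the other only $Y_b$, the endpoints decouple; a single condition $a_1 g(a)+a_2 g^{[1]}(a)=0$ is self-adjoint exactly when $(a_1,a_2)\cJ(a_1,a_2)^*=0$, i.e. $a_1\overline{a_2}\in\bbR$, so after normalizing the phase and length it becomes $\cos(\al)g(a)+\sin(\al)g^{[1]}(a)=0$ with $\al\in[0,\pi)$, and similarly at $b$ with $\be\in[0,\pi)$, yielding $(i)$. Otherwise both blocks are nonsingular; reducing $(A\mid B)$ to $(-M\mid I_2)$ shows the conditions read $Y_b(g)=M\,Y_a(g)$, and $A\cJ A^*=B\cJ B^*$ forces $M\cJ M^*=\cJ$. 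To identify these $M$ I would extract a phase $e^{i\vp}$ so that $R:=e^{-i\vp}M$ has $\det(R)=1$; then $R\cJ R^*=\cJ$, while the algebraic identity $R\cJ R^\top=\det(R)\cJ=\cJ$ holds for every $2\times2$ matrix, so $R\cJ R^*=R\cJ R^\top$, and invertibility gives $R^*=R^\top$, i.e. $R$ is real. Hence $M=e^{i\vp}R$ with $\vp\in[0,2\pi)$ and $R\in SL(2,\bbR)$, which is $(ii)$. Exhaustiveness $(iii)$ follows because the separated and coupled configurations are mutually exclusive and together exhaust all rank-two admissible pairs $(A,B)$.

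The step I expect to be the main obstacle is the algebraic reduction in the coupled case: establishing that $M\cJ M^*=\cJ$ is equivalent to $M=e^{i\vp}R$ with $R\in SL(2,\bbR)$, and tracking the parameter ranges together with the residual redundancy (for instance, that $(\vp,R)$ and $(\vp+\pi,-R)$ define the same extension). Setting up Green's formula and the deficiency indices is routine given regularity, but pinning down the precise canonical forms in $(i)$ and $(ii)$ and verifying that no admissible boundary condition is omitted is where the care is required.
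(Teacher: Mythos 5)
Your proposal is correct and coincides with the proof the paper implicitly relies on: Theorem \ref{t2.2} is stated there without proof, as standard material with references to \cite[Sect.~13.2]{We03} and \cite[Ch.~4]{Ze05}, and those sources proceed exactly as you do, via Green's formula, deficiency indices $(2,2)$, and the Glazman--Krein--Naimark classification by rank-two boundary matrices $(A\mid B)$ satisfying $A\cJ A^{*}=B\cJ B^{*}$, split into the separated and coupled cases. The one step you assert rather than argue, the exhaustiveness in item $(iii)$, is filled in routinely from your own setup: $A\cJ A^{*}=B\cJ B^{*}$ forces $\rank(A)=\rank(B)$, rank $0$ is impossible (it would make an invertible $B\cJ B^{*}$ vanish), and rank $1$ (resp.\ rank $2$) reduces, after left multiplication by an invertible matrix, precisely to the separated (resp.\ coupled) normal form.
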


Next we state some of the most pertinent concepts and results summarized from \cite{GK19} (in particular, Section 3) and will then illustrate how this permits one to effectively calculate certain values for the spectral $\zeta$-functions of the regular Sturm--Liouville operators considered.

For this purpose we introduce the fundamental system of solutions $\theta(z,x,a)$, $\phi(z,x,a)$ of $\tau y=z y$ defined by
\begin{align}\lb{2.9}
\theta(z,a,a)=\phi^{[1]}(z,a,a)=1,\quad \theta^{[1]}(z,a,a)=\phi(z,a,a)=0,
\end{align}
such that
\begin{align}
    W(\theta(z,\dott,a),\phi(z,\dott,a))=1,
\end{align}
noting that for fixed $x,$ each is entire with respect to $z$. Here the Wronskian of $f$ and $g$, for $f,g\in\ACl$, is defined by
\begin{align}
W(f,g)(x) = f(x)g^{[1]}(x) - f^{[1]}(x)g(x).
\end{align}

Furthermore, we introduce the boundary values for $g,g^{[1]}\in\AC$, see \cite[Ch.~I]{Na67}, \cite[Sect. 3.2]{Ze05},
\begin{align}
\begin{split}
& U_{\a,\b,1}(g)=g(a)\cos(\al)+g^{[1]}(a)\sin(\al),\\
& U_{\a,\b,2}(g)=g(b)\cos(\be)-g^{[1]}(b)\sin(\be),
\end{split}
\end{align}
in the case ($i$) of separated boundary conditions in Theorem \ref{t2.2}, and
\begin{align}
\begin{split}
& V_{\varphi,R,1}(g)=g(b)-e^{i\varphi}R_{11}g(a)-e^{i\varphi}R_{12}g^{[1]}(a),\\
& V_{\varphi,R,2}(g)=g^{[1]}(b)-e^{i\varphi}R_{21}g(a)-e^{i\varphi}R_{22}g^{[1]}(a),
\end{split}
\end{align}
in the case ($ii$) of coupled boundary conditions in Theorem \ref{t2.2}. Moreover, we define the \emph{characteristic functions}
\begin{align}
F_{\a,\b}(z)=\det\begin{pmatrix}U_{\a,\b,1}(\theta(z,\dott,a))& U_{\a,\b,1}(\phi(z,\dott,a))\\ U_{\a,\b,2}(\theta(z,\dott,a)) & U_{\a,\b,2}(\phi(z,\dott,a))\end{pmatrix},
\end{align}
and
\begin{align}
F_{\varphi,R}(z)=\det\begin{pmatrix}V_{\varphi,R,1}(\theta(z,\dott,a))& V_{\varphi,R,1}(\phi(z,\dott,a))\\ V_{\varphi,R,2}(\theta(z,\dott,a)) & V_{\varphi,R,2}(\phi(z,\dott,a))\end{pmatrix}.
\end{align}

\noindent 
{\bf Notational Convention.} {\it To describe all possible self-adjoint boundary conditions associated with self-adjoint extensions of $T_{min}$ effectively, we will frequently employ the notation $T_{A,B}$, $F_{A,B}$, 
$\lambda_{A,B,j}$, $j \in J$, etc., where $A,B$ represents $\alpha,\beta$ in the case of separated boundary conditions and $\varphi,R$ in the context of coupled boundary conditions.} 

By construction, eigenvalues of $T_{A,B}$ are determined via $F_{A,B}(z)=0$, with multiplicity of eigenvalues of $T_{A,B}$ corresponding to multiplicity of zeros of $F_{A,B}$, and $F_{A,B}(z)$ is entire with respect to $z$. In particular, for $T_{\a,\b}$, that is, for separated boundary conditions, one has 
\begin{align}\lb{2.16}
\begin{split}
F_{\a,\b}(z)&= \cos(\a)[-\sin(\b)\ \phi^{[1]}(z,b,a)+\cos(\b)\ \phi(z,b,a)] \\
&\quad -\sin(\a)[-\sin(\b)\ \theta^{[1]}(z,b,a)+\cos(\b)\ \theta(z,b,a)],\quad \a,\b\in[0,\pi),
\end{split}
\end{align}
and for $T_{\varphi,R}$, that is, for coupled boundary conditions, one has for $\varphi\in[0,\pi)$ and $R\in SL(2,\R)$,
\begin{align}\lb{2.17}
\no  F_{\varphi,R}(z)&= e^{i\varphi}\big(R_{12}\theta^{[1]}(z,b,a)-R_{22}\theta(z,b,a)+R_{21}\phi(z,b,a)-R_{11}\phi^{[1]}(z,b,a)\big)\\
&\quad +e^{2i\varphi}+1.
\end{align}

Next we will demonstrate that $F_{A,B}(\dott)$ is an entire function of order $1/2$ and finite type, independent of the boundary conditions chosen. This result is used when considering convergence of the complex contour integral representation of the spectral $\zeta$-function for large values of the spectral parameter $z$.

For this purpose we recall the following facts (see, e.g., \cite[Ch.~2]{Bo54}, \cite[Ch.~I]{Le80}): Supposing that $F(\dott)$ is entire, one introduces 
\begin{equation}
M_F(R) = \sup_{|z|=R} |F(z)|, \quad R \in [0,\infty). 
\end{equation} 
Then the order $\rho_F$ of $F$ is defined by
\begin{equation}
\rho_F = \limsup_{R \to \infty} \ln(\ln(M_F(R)))/\ln(R) \in [0,\infty) \cup \{\infty\}.
\end{equation}
In addition, if $\rho_F > 0$, the type $\tau_F$ of $F$ is defined as 
\begin{equation}
\tau_F = \limsup_{R \to \infty} \ln(M_F(R))/R^{\rho_F} \in [0,\infty) \cup \{\infty\},
\end{equation}
and, in obvious notation, $F$ is called of order $\rho_F > 0$ and of finite type $\tau_F$ if $\tau_F \in [0,\infty)$.

Thus, $F$ is of finite order $\rho_F \in [0,\infty)$ if and only if for every $\varepsilon > 0$, but for no 
$\varepsilon < 0$, 
\begin{equation}
M_F(R) \underset{R \to \infty}{=} \Oh\big(\exp\big(R^{\rho_F + \varepsilon}\big)\big),  
\end{equation}
and $F$ is of positive and finite order $\rho_F \in (0,\infty)$ and finite type $\tau_F \in [0,\infty)$ if and only if 
for every $\varepsilon > 0$, but for no $\varepsilon < 0$, 
\begin{equation}
M_F(R) \underset{R \to \infty}{=} \Oh\big(\exp\big((\tau_F + \varepsilon) R^{\rho_F}\big)\big). 
\end{equation}

By definition, if $F_j$ are entire of order $\rho_j$, $j=1,2$, then the order of $F_1 F_2$ does not exceed the larger of $\rho_1$ and $\rho_2$. 

For $F$ entire we also introduce the zero counting function 
\begin{equation}
N_F(R) = \# \big(Z_F \cap \ol{D(0;R)}\big), \quad R \in (0,\infty), 
\end{equation}
where $\#$ denotes cardinality and $Z_F$ represents the set of zeros of $F$ counting multiplicity (i.e., $N_F(R)$ counts the number of zeros of $F$ in the closed disk of radius $R >0$ centered at the origin).  

\begin{remark}\lb{r2.3}
Assuming Hypothesis \ref{h2.1}, then all solutions $\psi(z,\dott)$ of the regular Sturm--Liouville problem 
$(\tau y)(z,x)=zy(z,x)$, $z \in \bbC$, $x \in [a,b]$, satisfying $z$-independent initial conditions
\begin{equation}
\psi(z,x_0) = c_0, \quad \psi^{[1]}(z,x_0) = c_1,
\end{equation}
for some $x_0 \in [a,b]$ and some $(c_0,c_1) \in \bbC^2$, together with $\psi^{[1]}(z,\dott)$, for any 
fixed $x \in [a,b]$, are entire functions of $z$ of order at most $1/2$. Indeed, as shown in 
\cite[Sect.\ 8.2]{At64} (see also \cite{Mi83}, \cite[Theorem~2.5.3]{Ze05}), upon employing a Pr\"ufer-type transformation, one obtains 
\begin{align}
& |z| |\psi(z,x)|^2 + \big|\psi^{[1]}(z,x)\big|^2 \leq C(x_0) \exp\bigg(|z|^{1/2} 
\int_{\min(x_0,x)}^{\max(x_0,x)} dt \, \big[|p(t)|^{-1} + |r(t)|\big]    \no \\
& \hspace*{3cm} + |z|^{-1/2} \int_{\min(x_0,x)}^{\max(x_0,x)} dt \, |q(t)|\bigg),  \quad z \in \bbC, \; x_0, x \in [a,b].     \lb{2.25} 
\end{align}  
In particular, \eqref{2.16} and \eqref{2.17} yield that $F_{A,B}$ is an entire function of order at most $1/2$ for any self-adjoint boundary condition represented by $A,B$, that is,
\begin{equation}
\rho_{F_{A,B}} \leq 1/2.      \lb{2.26} 
\end{equation} 

Given Hypothesis \ref{h2.1}, one infers that $T_{A,B} \geq \Lambda_{A,B} I_{L^2_r((a,b))}$ for some 
$\Lambda_{A,B} \in \bbR$, with purely discrete spectrum, and hence 
$Z_{F_{A,B}}(R) \subset [\Lambda_{A,B},R]$  the elements of $Z_{F_{A,B}}(R)$ being precisely the 
eigenvalues of $T_{A,B}$ in the interval $[\max(-R, \Lambda_{A,B}),R]$. Employing the theory of Volterra operators in Hilbert spaces (and under some additional lower boundedness hypotheses\footnote{Upon closer inspection, the additional condition stated on \cite[p.~305, 306]{GK70} just ensures lower semiboundedness of $T_{A,B}$, which is independently known to hold in our present scalar context.} on $q$) in \cite[Chs.~VI, VII]{GK70}, alternatively, using oscillation theoretic methods in \cite{AM87}, it is shown that the eigenvalue counting function $N_{F_{A,B}}$ associated with $T_{A,B}$ satisfies 
\begin{equation}
N_{F_{A,B}} (\lambda) \underset{\lambda \to \infty}{=} \pi^{-1} \int_a^b dx \, [r(x)/p(x)]^{1/2} 
\lambda^{1/2} [1 + \oh(1)]. 
\lb{2.27}
\end{equation}
Ignoring finitely many nonpositive eigenvalues of $T_{A,B}$, equivalently, splitting off the factors in the infinite product representation associated with nonpositive zeros of $F_{A,B}$, that is, replacing $F_{A,B}$ by 
\begin{equation}
\wti F_{A,B} (z) = C_{A,B} \prod_{\substack{j \in \bbN, \\ \lambda_{A,B,j} > 0}} [1 - (z/\lambda_{A,B,j})]  
\end{equation} 
with 
\begin{equation}
N_{\wti F_{A,B}} (\lambda) \underset{\lambda \to \infty}{=} \pi^{-1} \int_a^b dx \, [r(x)/p(x)]^{1/2} 
\lambda^{1/2} [1 + \oh(1)],  
\end{equation}
implies (cf.\ \cite[Theorem~4.1.1]{Bo54}, \cite{Ti26}, \cite{Ti27}),
\begin{equation}
\ln\big(\wti F_{A,B}(\lambda)\big) \underset{\lambda \to \infty}{=} \int_a^b dx \, [r(x)/p(x)]^{1/2} 
\lambda^{1/2} [1 + \oh(1)].  
\end{equation}
Thus, 
\begin{equation}
\rho_{F_{A,B}} = \rho_{\wti F_{A,B}} \geq 1/2, 
\end{equation}
and hence by \eqref{2.26}, 
\begin{equation}
\rho_{F_{A,B}} = 1/2. 
\end{equation}
Moreover, by \eqref{2.25}, $F_{A,B}$ is of order 1/2 and finite type. Finally, we also mention that \eqref{2.27} implies that 
\begin{equation}
\lambda_{A,B,j} \underset{j \to \infty}{=} \bigg[\int_a^b dx \, [r(x)/p(x)]^{1/2}\bigg]^{-2} \pi^2 j^2 [1 + \oh(1)]
\end{equation}
(cf.\ also the discussion in \cite[Sects.~1.11, 9.1]{RSS94}, \cite[Sect.~4.3]{Ze05}). 
\hfill $\diamond$
\end{remark}

The following theorem (see \cite[Thm.~3.4]{GK19}) directly relates the function $F_{A,B}$ to Fredholm determinants and traces (see \cite[Ch.~IV]{GK69}, \cite[Sect.~XIII.17]{RS78}, \cite{Si77}, 
\cite[Ch.~3]{Si05}, \cite[Ch.~3]{Si15} for background).

\begin{theorem}\lb{t2.4}
Assume Hypothesis \ref{h2.1} and denote by $T_{\a,\b}$ and $T_{\varphi,R}$ the self-adjoint extensions of $T_{min}$ as described in cases $(i)$ and $(ii)$ of Theorem \ref{t2.2}, respectively. \\[1mm]
$(i)$ Suppose $z_0\in\rho(T_{\a,\b})$, then
\begin{align}
\begin{split}
\det&_{L_r^2((a,b))}\big(I_{L_r^2((a,b))}-(z-z_0)(T_{\a,\b}-z_0I_{L_r^2((a,b))})^{-1}\big)\\
&=F_{\a,\b}(z)/F_{\a,\b}(z_0),\quad z\in\C.
\end{split}
\end{align}
In particular,
\begin{align}
    \tr_{L_r^2((a,b))}\big((T_{\a,\b}-zI_{L_r^2((a,b))})^{-1}\big)=-(d/dz)\ln(F_{\a,\b}(z)),\quad z\in\rho(T_{\a,\b}).
\end{align}
$(ii)$ Suppose $z_0\in\rho(T_{\varphi,R})$, then
\begin{align}
\begin{split}
\det&_{L_r^2((a,b))}\big(I_{L_r^2((a,b))}-(z-z_0)(T_{\varphi,R}-z_0I_{L_r^2((a,b))})^{-1}\big)\\
&=F_{\varphi,R}(z)/F_{\varphi,R}(z_0),\quad z\in\C.
\end{split}
\end{align}
In particular,
\begin{align}
    \tr_{L_r^2((a,b))}\big((T_{\varphi,R}-zI_{L_r^2((a,b))})^{-1}\big)=-(d/dz)\ln(F_{\varphi,R}(z)),\quad z\in\rho(T_{\varphi,R}).
\end{align}
\end{theorem}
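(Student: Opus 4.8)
The plan is to treat cases $(i)$ and $(ii)$ simultaneously under the convention $T_{A,B}$, $F_{A,B}$, and to reduce the determinant identity to a comparison of two entire functions of order $1/2$ that share the same zeros. First I would record the spectral input from Remark \ref{r2.3}: $T_{A,B}$ is self-adjoint and bounded from below with purely discrete spectrum $\{\lambda_{A,B,j}\}_{j}$ obeying $\lambda_{A,B,j} \sim c_{A,B}\, j^2$ as $j \to \infty$, where $c_{A,B} = \big[\int_a^b dx\,[r(x)/p(x)]^{1/2}\big]^{-2}\pi^2$ by \eqref{2.27}. Consequently $K := (T_{A,B}-z_0 I)^{-1}$ is compact with eigenvalues $(\lambda_{A,B,j}-z_0)^{-1}$ decaying like $j^{-2}$, so $\sum_j |\lambda_{A,B,j}-z_0|^{-1} < \infty$ and $K$ is trace class. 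This is exactly what is needed for the Fredholm determinant on the left-hand side to be well-defined and entire in $z$.

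Second, I would evaluate that determinant spectrally. As $T_{A,B}$ is self-adjoint it possesses an orthonormal eigenbasis, on which $K$ acts diagonally; since for a trace class operator $A$ one has $\det_{\Lr}(I-A) = \prod_j (1-\mu_j(A))$ over the eigenvalues $\mu_j(A)$ of $A$ counted with algebraic multiplicity, taking $A = (z-z_0)K$ gives
\[
\det_{\Lr}\big(I - (z-z_0)K\big) = \prod_{j}\Big(1 - \frac{z-z_0}{\lambda_{A,B,j}-z_0}\Big) = \prod_{j}\frac{\lambda_{A,B,j}-z}{\lambda_{A,B,j}-z_0},
\]
the product converging absolutely by the trace class bound just noted.

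Third, I would match this against $F_{A,B}$. By the remarks following the definition of the characteristic functions, the zeros of $F_{A,B}$ are precisely the $\lambda_{A,B,j}$ with multiplicities equal to the eigenvalue multiplicities, and by Remark \ref{r2.3} $F_{A,B}$ is entire of order $1/2$, hence of genus $0$. The left-hand determinant, viewed as an entire function of $z$, has the same zeros with the same multiplicities, equals $1$ at $z=z_0$, and---because the exponent of convergence of $\{\lambda_{A,B,j}\}$ is $1/2$---is likewise of order $1/2$. Therefore the ratio
\[
G(z) := \frac{F_{A,B}(z)}{F_{A,B}(z_0)\,\det_{\Lr}\big(I-(z-z_0)K\big)}
\]
is entire, nowhere vanishing, and of order $\le 1/2 < 1$; by the Hadamard representation $G = \exp(P)$ with $\deg P \le \rho_G < 1$, the polynomial $P$ is constant, and $G(z_0)=1$ forces $G \equiv 1$. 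This is the asserted determinant identity. The trace formula then follows by logarithmic differentiation: since $F_{A,B}(z_0)$ is $z$-independent and the standard relation $(d/dz)\ln\det_{\Lr}(I-(z-z_0)K) = -\tr_{\Lr}((T_{A,B}-zI)^{-1})$ holds on $\rho(T_{A,B})$, equating logarithmic derivatives of the two sides yields $\tr_{\Lr}((T_{A,B}-zI)^{-1}) = -(d/dz)\ln F_{A,B}(z)$.

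The main obstacle is the faithful bookkeeping of multiplicities: I must be certain that the order of vanishing of $F_{A,B}$ at each $\lambda_{A,B,j}$ equals the eigenvalue multiplicity (geometric $=$ algebraic, by self-adjointness), the delicate instance being a double eigenvalue in the coupled case $(ii)$, where $F_{\varphi,R}$ must vanish to second order. Granting this correspondence---which is built into the construction of $F_{A,B}$---the rest is purely complex-analytic and hinges on certifying that the left-hand determinant has order exactly $1/2$, which in turn rests solely on the eigenvalue asymptotics \eqref{2.27}. Case $(ii)$ then requires no new ideas: the argument uses only discreteness of the spectrum, the trace class property of the resolvent, order-$1/2$ growth, and the zero--eigenvalue correspondence, all of which hold verbatim with $F_{\varphi,R}$ in place of $F_{\alpha,\beta}$.
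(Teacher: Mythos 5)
Your proof is correct, but there is essentially nothing in the paper to compare it against: Theorem \ref{t2.4} is stated without proof, being quoted from \cite[Thm.~3.4]{GK19}, and the paper itself only assembles the supporting ingredients (the order-$1/2$ statement and eigenvalue asymptotics \eqref{2.27} in Remark \ref{r2.3}, plus the correspondence, stated right after the definition of $F_{A,B}$, between zeros of $F_{A,B}$ and eigenvalues counted with multiplicity). Your route --- the trace-class/Lidskii evaluation of the Fredholm determinant as $\prod_j (\lambda_{A,B,j}-z)/(\lambda_{A,B,j}-z_0)$, followed by a rigidity argument for entire functions of order $<1$ --- is the natural self-contained proof and is in substance the mechanism behind the cited result, so you have supplied a proof where the paper defers to the literature. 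One step should be tightened: your assertion that the quotient $G$ has order $\le 1/2$ is not a formality, since bounding the order of an entire quotient $f/g$ in general requires a minimum-modulus estimate for $g$. You can sidestep this entirely: order $1/2<1$ forces genus $0$, so Hadamard's theorem gives $F_{A,B}(z)=C\,z^{m_0}\prod_{\lambda_{A,B,j}\neq 0}\bigl(1-z/\lambda_{A,B,j}\bigr)$ with $C\neq 0$, and dividing this by the eigenvalue product for the determinant cancels factor by factor, exhibiting $G$ as a constant outright; your normalization $G(z_0)=1$ then yields the determinant identity, and logarithmic differentiation gives the trace formula exactly as you argue. (A smaller gloss: inferring the trace-class property from $\sum_j|\lambda_{A,B,j}-z_0|^{-1}<\infty$ uses that $(T_{A,B}-z_0 I_{L^2_r((a,b))})^{-1}$ is normal, so that its singular values are the moduli of its eigenvalues; this is immediate from the spectral theorem since it is a function of the self-adjoint $T_{A,B}$, but is worth stating because $z_0$ need not be real.)
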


Given these preparations, we let $T_{A,B}$ denote the self-adjoint extension of $T_{min}$ with either separated 
($T_{\a,\b}$) or coupled ($T_{\varphi,R}$) boundary conditions as described in cases $(i)$ and $(ii)$ of Theorem \ref{t2.2}. One recalls (see, e.g., \cite{GK19}), the spectral $\zeta$-function of the operator, $T_{A,B}$, is defined as
\begin{align}\lb{2.39}
\zeta(s;T_{A,B}):=\sum_{\underset{\lambda_j\neq 0}{j\in J}} \lambda_{A,B,j}^{-s}, 
\end{align}
with $J\subset \Z$ an appropriate index set counting eigenvalues according to their multiplicity and 
$\Re(s)>0$ sufficiently large such that \eqref{2.39} converges absolutely. Applying Theorem \ref{t2.4}, it was shown in 
\cite{GK19} that for $\Re(s)>0$ sufficiently large,
\begin{align}\lb{2.40}
\begin{split}
\zeta(s;T_{A,B})&=\dfrac{1}{2\pi i}\ointctrclockwise_\gamma dz\ z^{-s}\bigg(\dfrac{d}{dz}\ln (F_{A,B}(z))-z^{-1}m(0;T_{A,B})\bigg)\\
&=\dfrac{1}{2\pi i}\ointctrclockwise_\gamma dz\ z^{-s}\bigg(\dfrac{d}{dz}\ln (F_{A,B}(z))-z^{-1}m_0\bigg), 
\end{split}
\end{align}
where $m(0;T_{A,B})=m_0$ is the multiplicity of zero as an eigenvalue of $T_{A,B}$ and $\gamma$ is a simple contour enclosing $\sigma(T_{A,B})\backslash\{0\}$ in a counterclockwise manner so as to dip under (and hence avoid) the point 0 (cf. Figure ~\ref{f1}). Here, following \cite{KM03} (see also \cite{KM04}), we take 
\begin{align}
    R_\psi=\{z=te^{i\psi} \, | \, t\in [0,\infty)\},\quad \psi\in (\pi/2,\pi),      \lb{2.40a} 
\end{align}
to be the branch cut of $z^{-s}$, and, once again, eigenvalues will be determined via $F_{A,B}(z)=0$, with the multiplicity of eigenvalues of $T_{A,B}$ corresponding to the multiplicity of zeros of $F_{A,B}$.

\begin{figure}[H]
\vspace{.8cm}
{\setlength{\unitlength}{1.0cm}
\begin{picture}(10,4.5)
\thicklines

\put(0,1.5){\vector(1,0){10}} \put(5.0,0){\vector(0,1){5}}
\put(10.0,1.5){\oval(9.4,1)[tl]} \put(5,1.5){\oval(0.6,0.5)[b]}
\put(4.4,1.5){\oval(0.6,1)[tr]} \put(4.4,1.5){\oval(3.0,1)[l]}
\put(7.15,2){\vector(-1,0){0}}
\put(4.4,1){\line(1,0){5.6}}
\put(5.0,1.5){\line(-1,2){1.5}} \put(.7,4.8){{\bf The cut $\boldsymbol{R_{\psi}}$ for
$\boldsymbol{z^{-s}}$}} \multiput(5.6,1.5)(.4,0){10}{\circle*{.15}}
\multiput(3.5,1.5)(.4,0){3}{\circle*{.15}}
\put(8.0,4.5){{\bf $\boldsymbol{z}$-plane}}
\put(7.15,2.3){{\bf $\boldsymbol{\gamma}$}}
\end{picture}}
\caption{Contour $\gamma$ in the complex $z$-plane.}
\lb{f1}

\vspace{.5cm}
\begin{multicols}{2}
{\setlength{\unitlength}{.6cm}
\begin{picture}(10,5)
\thicklines

\put(0,1.5){\vector(1,0){10}}
\put(5.0,0){\vector(0,1){5}}
\put(5,1.5){\oval(0.6,0.5)[b]}
\put(4.7,1.5){\line(-1,2){1.5}}
\put(3.7,3.45){\vector(-1,2){0}}
\put(5.3,1.5){\line(-1,2){1.5}}
\put(4.3,3.5){\vector(1,-2){0}}
\multiput(5.0,1.5)(-.45,.89){4}{\line(-1,2){.29289}}
\put(0,5.1){{\bf The cut $\boldsymbol{R_{\psi}}$ for $\boldsymbol{z^{-s}}$}}
\multiput(5.6,1.5)(.4,0){10}{\circle*{.15}}
\multiput(3.5,1.5)(.4,0){3}{\circle*{.15}}
\put(8.0,4.5){{\bf $\boldsymbol{z}$-plane}}
\put(5.35,1.85){{\bf $\boldsymbol{\gamma}$}}
\end{picture}}
\caption{Deforming $\gamma$.}
\lb{f2}

\vspace{.3cm}
{\setlength{\unitlength}{.6cm}
\begin{picture}(10,5)
\thicklines

\put(0,1.5){\vector(1,0){10}}
\put(5.0,0){\vector(0,1){5}}
\put(5.0,1.5){\circle{.75}}
\put(5.1,1.1){\vector(-1,-1){0}}
\multiput(5.6,1.5)(.4,0){10}{\circle*{.15}}
\multiput(3.5,1.5)(.4,0){3}{\circle*{.15}}
\put(8.0,4.5){{\bf $\boldsymbol{z}$-plane}}
\put(5.35,1.85){{\bf $\boldsymbol{C_\varepsilon}$}}
\end{picture}}
\caption{Contour $C_\varepsilon$.}
\lb{f3}
\end{multicols}
\end{figure}

To continue the computation of \eqref{2.40} and deform the contour $\gamma$ as to ``hug'' the branch cut $R_\psi$ (cf. Figure \ref{f2}) requires knowledge of the asymptotic behavior of $F_{A,B}(z)$ as $|z|\to\infty$, which in turn demands 
$\Re(s)>1/2$ for large-$z$ convergence (cf.\ Remark \ref{r2.3}). 
Furthermore, if one is interested in the calculation of the value of the spectral zeta function at positive integers, the following method provides a very simple way of obtaining those values. In fact, by letting $s=n$, $n\in\N$, in \eqref{2.40}, one no longer needs a branch cut for the fractional powers of $z^{-s}$ given in Figures \ref{f1} and \ref{f2}. This reduces the integral along the curve $\gamma$ to a clockwise oriented integral along the circle $C_\varepsilon$, centered at zero with radius $\varepsilon>0$ (cf. Figure \ref{f3}). Letting $s=n$ also ensures that $m_0$ (the multiplicity of zero as an eigenvalue of $T_{A,B}$) does not contribute to the integral in \eqref{2.40}. Hence,
\begin{align}\lb{2.42}
\begin{split}
\zeta(n;T_{A,B})&=-\dfrac{1}{2\pi i}\ointctrclockwise_{C_\varepsilon}dz\ z^{-n}\dfrac{d}{dz}\ln (F_{A,B}(z))\\
&=-\Res \left[ z^{-n}\dfrac{d}{dz}\ln (F_{A,B}(z));\ z=0 \right],\quad n\in\N.
\end{split}
\end{align}

Thus, determining an expansion of $F_{A,B}(z)$ about $z=0$ enables one to effectively compute 
$\zeta(n;T_{A,B})$. In 
addition, by \eqref{2.16}, \eqref{2.17}, $F_{A,B}(z)$ is a linear combination of $\theta$, $\theta^{[1]}$, 
$\phi$, and $\phi^{[1]}$ for each boundary condition considered, so it suffices to find the expansion of each of these functions individually.

\section{Expansion in \textit{z} for Fundamental Solutions, Asymptotic Expansion, and the Zeta Regularized Functional Determinant} \lb{s3}

\subsection{Expansion in \textit{z} for Fundamental Solutions} \lb{s3.1}
\hfill

Assuming Hypothesis \ref{h2.1} throughout this section, we discuss next the expansion in $z$ about $z=0$ for the solutions $\phi(z,\dott,a)$ and $\theta(z,\dott,a)$ of $\tau y=zy$,

\begin{align}
\phi(z,x,a)&= \phi(0,x,a) + z\int_a^x r(x')dx' \, g(0,x,x') \phi(z,x',a),  \lb{3.1} \\
\theta(z,x,a)&= \theta(0,x,a) + z\int_a^x r(x')dx' \, g(0,x,x') \theta(z,x',a),    \lb{3.2} \\
& \hspace*{4.8cm} z \in \bbC, \, x \in [a,b],    \no 
\end{align}
employing the following expression for the Volterra Green's function
\begin{align}
g(0,x,x') = \theta(0,x,a)\phi(0,x',a)-\theta(0,x',a)\phi(0,x,a), \quad x, x' \in [a,b].
\end{align}
That \eqref{3.1} and \eqref{3.2} indeed represent solutions of $\tau y=zy$ is clear from applying $\tau$ to either side, moreover, the initial conditions \eqref{2.9} are readily verified. 

Iterating these integral equations establishes the power series expansions
\begin{align}\lb{3.4}
    \phi(z,x,a)=\sum_{m=0}^\infty z^m\phi_m(x),\quad z\in\C, \; x \in [a,b], 
\end{align}
where
\begin{align}
\begin{split}
\phi_0(x)&= \phi(0,x,a),\\
\phi_1(x)&= \int_a^x r(x_1)dx_1\ g(0,x,x_1)\phi(0,x_1,a),\\
\phi_k(x)&= \int_a^x r(x_1)dx_1\ g(0,x,x_1)\int_a^{x_1} r(x_2)dx_2\ g(0,x_1,x_2)\dots\\
& \quad \dots\int_a^{x_{k-1}} r(x_k)dx_k\ g(0,x_{k-1},x_k)\phi(0,x_k,a),\quad k\in\N,
\end{split}
\end{align}
and 
\begin{align}\lb{3.6}
    \theta(z,x,a)=\sum_{m=0}^\infty z^m\theta_m(x),\quad z\in\C, \; x \in [a,b], 
\end{align}
where
\begin{align}
\begin{split}
\theta_0(x)&= \theta(0,x,a),\\
\theta_1(x)&= \int_a^x r(x_1)dx_1\ g(0,x,x_1)\theta(0,x_1,a),\\
\theta_k(x)&= \int_a^x r(x_1)dx_1\ g(0,x,x_1)\int_a^{x_1} r(x_2)dx_2\ g(0,x_1,x_2)\dots\\
&\quad \dots\int_a^{x_{k-1}} r(x_k)dx_k\ g(0,x_{k-1},x_k)\theta(0,x_k,a),\quad k\in\N.
\end{split}
\end{align}

Analogously one obtains 
\begin{align}\lb{3.8}
    \phi^{[1]}(z,x,a)=\sum_{m=0}^\infty z^m\phi^{[1]}_m(x),\quad z\in\C, \; x \in [a,b], 
\end{align}
where
\begin{align}
\begin{split}
\phi^{[1]}_0(x)&= \phi^{[1]}(0,x,a),\\
\phi^{[1]}_1(x)&= \int_a^x r(x_1)dx_1\ g^{[1]}(0,x,x_1)\phi(0,x_1,a),\\
\phi^{[1]}_k(x)&= \int_a^x r(x_1)dx_1\ g^{[1]}(0,x,x_1)\int_a^{x_1} r(x_2)dx_2\ g(0,x_1,x_2)\dots\\
& \quad \dots\int_a^{x_{k-1}} r(x_k)dx_k\ g(0,x_{k-1},x_k)\phi(0,x_k,a),\quad k\in\N,
\end{split}
\end{align}
using the abbreviation
\begin{align}
g^{[1]}(0,x,x_1)=\theta^{[1]}(0,x,a)\phi(0,x_1,a)-\theta(0,x_1,a)\phi^{[1]}(0,x,a).
\end{align}
Similarly, one finds from \eqref{3.6}
\begin{align}\lb{3.11}
    \theta^{[1]}(z,x,a)=\sum_{m=0}^\infty z^m\theta^{[1]}_m(x),\quad z\in\C, \; x \in [a,b], 
\end{align}
where
\begin{align}
\begin{split}
\theta^{[1]}_0(x)&= \theta^{[1]}(0,x,a),\\
\theta^{[1]}_1(x)&= \int_a^x r(x_1)dx_1\ g^{[1]}(0,x,x_1)\theta(0,x_1,a),\\
\theta^{[1]}_k(x)&= \int_a^x r(x_1)dx_1\ g^{[1]}(0,x,x_1)\int_a^{x_1} r(x_2)dx_2\ g(0,x_1,x_2)\dots\\
& \quad \dots\int_a^{x_{k-1}} r(x_k)dx_k\ g(0,x_{k-1},x_k)\theta(0,x_k,a),\quad k\in\N.
\end{split}
\end{align}

\subsection{Asymptotic Expansion of the Characteristic Function} \lb{s3.2}
\hfill

Next we investigate the $|z|\to\infty$ asymptotic expansion of the function $F_{A,B}(z)$ in order to provide an analytic continuation of the spectral $\zeta$-function, $\zeta(s;T_{A,B})$, and compute the zeta regularized functional determinant. We first strengthen Hypothesis \ref{h2.1} by introducing the following assumptions on $p,q,r$ following \cite[Sect. 3]{GK19}. These additional 
assumptions\footnote{The original archive submission and the published version of this article in Res.~Math.~Sci.~{\bf 8}, No.~61 (2021), 46 pp., assumed in addition that $1/r\in L^\infty((a,b);dx)$ in item $(i)$ of Hypothesis~3.1. However, this additional assumption is superfluous. Its origin traces back to the same superfluous assumption made in Hypothesis~3.9\,$(i)$ in \cite{GK19}. (Incidentally, we note that the assumption, for some $\varepsilon >0$, $pr \geq \varepsilon$ on $[a,b]$, was missed in \cite[Hypothesis~3.9\,$(iv)$]{GK19}.)} are necessary in order to perform a Liouville-type transformation.

\begin{hypothesis} \lb{h3.1}
Let $(a,b) \subset \bbR$ be a finite interval and suppose that $p,q,r$ are $($Lebesgue\,$)$ measurable functions on $(a,b)$ such that the following items $(i)$--$(iv)$ hold: \\[1mm] 
$(i)$ $r > 0$ a.e.~on $(a,b)$, $r \in L^1((a,b);dx)$. \\[1mm] 
$(ii)$ $p > 0$ a.e.~on $(a,b)$, $1/p \in L^1((a,b);dx)$. \\[1mm]
$(iii)$ $q$ is real-valued a.e.~on $(a,b)$, $q \in L^1((a,b);dx)$. \\[1mm]
$(iv)$ $pr$ and $(pr)'/r$ are absolutely continuous on $[a,b]$, and for some $\varepsilon >0$, $pr \geq \varepsilon$ 
on $[a,b]$.
\end{hypothesis}

The variable transformations (cf. \cite[p. 2]{LS75}), 
\begin{align}
&\xi(x)=\dfrac{1}{c}\int_a^x dt\ [r(t)/p(t)]^{1/2},\quad \xi(x) \in [0,1] \, \text{ for } \, x \in [a,b],   \lb{3.13}\\
& \xi'(x) = c^{-1} [r(x)/p(x)]^{1/2} > 0 \, \text{ a.e.~on $(a,b)$,}    \lb{3.14} \\
&u(z,\xi)=[p(x(\xi))r(x(\xi))]^{1/4}y(z,x(\xi)),     \lb{3.15} 
\end{align}
with $c > 0$ given by
\begin{align}
c=\int_a^b dt\ [r(t)/p(t)]^{1/2},
\end{align}
transform the Sturm--Liouville problem $(\tau y(z,\dott))(x)=zy(z,x)$, $x \in (a,b)$, into
\begin{align}\lb{3.17}
- \ddot u (z,\xi)+V(\xi)u(z,\xi)=c^2 zu(z,\xi), \quad \xi \in (0,1), 
\end{align}
and abbreviating 
\begin{equation} 
\nu(\xi)=[p(x(\xi))r(x(\xi))]^{1/4},
\end{equation}
one verifies that
\begin{align}
\begin{split}
V(\xi)&=\dfrac{\ddot \nu (\xi)}{\nu(\xi)}+c^2\dfrac{q(x)}{r(x)}\\
&=-\dfrac{c^2}{16}\dfrac{1}{p(x)r(x)}\left[\dfrac{(p(x)r(x))^{\prime}}{r(x)}\right]^2+\dfrac{c^2}{4}\dfrac{1}{r(x)} \left[\dfrac{(p(x)r(x))^{\prime}}{r(x)}\right]^{\prime} + c^2\dfrac{q(x)}{r(x)}, 
\end{split}
\end{align}
and 
\begin{equation} 
V\in L^1((0,1);d\xi), 
\end{equation} 
as guaranteed by Hypothesis \ref{h3.1}.

In order to construct the asymptotic expansion of $F_{A,B}(z)$ we begin by assuming Hypothesis \ref{h3.1}, but note that throughout the construction of the expansion stronger assumptions will be necessary, all of which will be addressed once the final asymptotic expansion is given.

When applying the Liouville transformation the boundary conditions undergo a similar transformation. 
In fact, setting 
\begin{equation} 
Q(\xi)=[(pr)'/r](x(\xi)) 
\end{equation} 
one can write
\begin{align}\lb{3.22}
\begin{pmatrix}u(z,\xi)\\\dot u(z,\xi)\end{pmatrix} 
= M(\xi) \begin{pmatrix}
y(z,x(\xi))\\y^{[1]}(z,x(\xi))\end{pmatrix},
\end{align}
where 
\begin{align}
M(\xi)=\begin{pmatrix} \nu(\xi) & 0\\ (c/4) \nu(\xi)^{-1} Q(\xi) & c \nu(\xi)^{-1} \end{pmatrix},\quad \xi \in [0,1],
\quad {\det}_{\bbC^2}(M(\dott))=c.
\end{align}   

Employing relation \eqref{3.22}, the separated boundary conditions for the function $g(\dott)$ in 
Theorem \ref{t2.2}\,$(i)$ transform into separated boundary  conditions for the transformed function 
$v(\dott)$ as follows,  
\begin{align}
\begin{pmatrix} \cos(\a) & \sin(\a)\\ 0 & 0 \end{pmatrix}M(0)^{-1} \begin{pmatrix}v(0)\\\dot v(0)\end{pmatrix}+ 
\begin{pmatrix} 0 & 0\\ \cos(\b) & -\sin(\b) \end{pmatrix}M(1)^{-1} \begin{pmatrix}v(1)\\ 
\dot v(1)\end{pmatrix},
\end{align}
where $\a,\b\in[0,\pi)$, and the inverse matrix $M^{-1}(\dott)$ has the form
\begin{align}
M(\xi)^{-1} = \begin{pmatrix}  \nu(\xi)^{-1} & 0\\ - (1/4)  \nu(\xi)^{-1}Q(\xi) &  c^{-1} \nu(\xi) \end{pmatrix},\quad \xi \in [0,1],
\end{align}  
or, more explicitly, 
\begin{align}\lb{3.26}
\begin{split}
 c^{-1} \nu(0)\sin(\a)\dot v(0)+ \nu(0)^{-1} \left[\cos(\a)-4^{-1} \sin(\a) Q(0)\right]v(0)&=0,\\
- c^{-1} \nu(1)\sin(\b)\dot v(1)+ \nu(1)^{-1} \left[\cos(\b)+4^{-1} \sin(\b) Q(1)\right]v(1)&=0.
\end{split}
\end{align} 

With the help of relation \eqref{3.22} the coupled boundary conditions for $g(\dott)$ in 
Theorem \ref{t2.2}\,$(ii)$ transform into coupled boundary conditions for $v(\dott)$ via 
\begin{align}
\begin{pmatrix} v(1)\\\dot v(1)\end{pmatrix} 
= e^{i\varphi}\wti{R} \begin{pmatrix} v(0)\\ \dot v(0)\end{pmatrix},\quad \varphi\in[0,\pi),     \lb{3.27}
\end{align} 
where 
\begin{equation} 
\wti{R}=M(1)^{-1} RM(0) \in SL(2,\bbR) 
\end{equation} 
is of the form 
\begin{align}
\begin{split}
\wti{R}_{11}&= \nu(0)^{-1} \nu(1)\left[R_{11}- 4^{-1} Q(0)R_{12}\right],\quad \wti{R}_{12}= c^{-1}   \nu(0)\nu(1)R_{12},\\
\wti{R}_{21}&=c \nu(0)^{-1}  \nu(1)^{-1} \left[R_{21}- 4^{-1} Q(0)R_{22}+ 4^{-1} Q(1)R_{11}- (16)^{-1}Q(0)Q(1)R_{12}\right],\\
\wti{R}_{22}&=\nu(0) \nu(1)^{-1} \left[R_{22}+ 4^{-1} Q(1)R_{12}\right].     \lb{3.29} 
\end{split}
\end{align}

The fundamental system of solutions $\phi(z,\dott,a)$ and $\theta(z,\dott,a)$ of $\tau y=zy$
satisfying \eqref{2.9} 
is transformed into the set of solutions $\Phi(z,\dott ,0)$ and $\Theta(z,\dott ,0)$ of  \eqref{3.17} satisfying 
the conditions
\begin{align}
& \Phi(z,0,0)=0, \qquad \,\,\, \dot \Phi(z,0,0)=c \nu(0)^{-1},      \lb{3.30} \\
&\Theta(z,0,0)=\nu(0), \quad \dot \Theta (z,0,0)= 4^{-1} c \nu(0)^{-1} Q(0),   \lb{3.31}
\end{align} 
where, once again, the derivatives of $\Phi(z,\xi ,0)$ and $\Theta(z,\xi ,0)$ are understood with respect to the variable $\xi $ (cf.\ \eqref{3.17}) and one notes that for fixed $\xi $, each is entire with respect to $z$. By writing a generic solution of \eqref{3.17} as a linear combination of $\Phi(z,\xi ,0)$ and $\Theta(z,\xi ,0)$ and by imposing the 
separated boundary conditions in \eqref{3.26} one obtains the following characteristic function
\begin{align}\lb{3.32}
\begin{split}
\cF_{\a,\b}(z)&= \sin(\a)\big\{c^{-1}  \nu(1)\sin(\b)\dot \Theta (z,1,0)\\
&\quad - \nu(1)^{-1} \left[\cos(\b)+ 4^{-1}\sin(\b) Q(1)\right]\Theta(z,1,0)\big\}\\
&\quad +\cos(\a)\big\{- c^{-1} \nu(1)\sin(\b)\dot \Phi (z,1,0)\\
&\quad + \nu(1)^{-1} \left[\cos(\b)+ 4^{-1} \sin(\b) Q(1)\right]\Phi(z,1,0)\big\},\quad z\in\C.
\end{split}
\end{align}
The zeros of $\cF_{\a,\b}(z)$ represent, including multiplicity, the eigenvalues $\lambda_{A,B,j}$, 
$j \in J$, of the original Sturm--Liouville problem $\tau y=zy$ endowed with the 
separated boundary conditions in \eqref{2.7}.
By repeating this argument for coupled boundary conditions \eqref{2.8} one obtains the characteristic function
\begin{align}\lb{3.33}
\begin{split}
\cF_{\varphi,\wti{R}}(z)&= e^{i\varphi}\big\{2\cos(\varphi)-\big[c^{-1} \nu(0)\wti{R}_{11}
+ 4^{-1} \nu(0)^{-1} Q(0)\wti{R}_{12}\big]\dot \Phi (z,1,0)\\
&\quad +\big[c^{-1} \nu(0)\wti{R}_{21}
+ 4^{-1} \nu(0)^{-1} Q(0)\wti{R}_{22}\big]\Phi(z,1,0)\\
&\quad +\wti{R}_{12} \nu(0)^{-1} \dot \Theta (z,1,0)-\wti{R}_{22} \nu(0)^{-1} \Theta(z,1,0)\big\},\quad z\in\C.
\end{split}
\end{align} 

\begin{remark} \lb{r3.2}
Explicit computations confirm that in the case of separated as well as coupled boundary conditions one finds 
\begin{align}
F_{\a,\b}(z) &= \cF_{\a,\b}(z), \quad z \in \bbC,      \lb{3.34} \\
F_{\varphi,R}(z) &= \cF_{\varphi,\wti R}(z), \quad z \in \bbC.      \lb{3.35} 
\end{align}
${}$ \hfill $\diamond$
\end{remark}

As an example we now consider the case of the Krein--von Neumann extension (see, e.g., \cite{FGKLNS20} and the literature cited therein for details):

\begin{example} \lb{e3.3}
The Krein--von Neumann boundary conditions in terms of the variable $x\in[a,b]$ are characterized by imposing the coupled boundary conditions $\varphi=0$, $R=R_K$ $($cf., e.g., \cite[eq.~(3.35)]{GK19}$)$ with
\begin{align}
R_K=\begin{pmatrix}
\theta(0,b,a) & \phi(0,b,a)\\
\theta^{[1]}(0,b,a) & \phi^{[1]}(0,b,a)
\end{pmatrix}.
\end{align}
In terms of the variable $\xi \in [0,1]$, these boundary conditions are transformed into $\varphi=0$ and $\wti R=\wti R_K$ with
\begin{align}
\wti R_K=\begin{pmatrix}
 \nu(0)^{-1} \big[\Theta(0,1,0)-4^{-1} Q(0)\Phi(0,1,0)\big] & c^{-1}\nu(0)\Phi(0,1,0)\\
 \nu(0)^{-1} \big[\dot \Theta (0,1,0)-4^{-1} Q(0)\dot \Phi (0,1,0)\big] & c^{-1}\nu(0)\dot \Phi (0,1,0)\\
\end{pmatrix}.
\end{align}
By using these parameters in \eqref{3.33} one obtains the transformed characteristic function
\begin{align}
\begin{split}
\cF_{0, \wti R_K} (z)&= 2-c^{-1}\big[\dot \Phi (0,1,0)\Theta(z,1,0)+\Theta(0,1,0)\dot \Phi (z,1,0)     \\
&\hspace*{1.7cm} -\Phi(0,1,0)\dot \Theta (z,1,0)-\dot \Theta (0,1,0)\Phi(z,1,0)\big],\quad z\in\C, 
\end{split}
\end{align}
to be compared with $($see \cite[eq.~(3.36), (3.37)]{GK19}$)$ 
\begin{align}
\begin{split}
F_{0, R_K} (z)&= 2 - \big[\phi^{[1]} (0,b,a)\theta(z,b,a) + \theta(0,b,a) \phi^{[1]} (z,b,a)     \\
&\hspace*{1.15cm} -\phi(0,b,a) \theta^{[1]} (z,b,a) - \theta^{[1]} (0,b,a) \phi(z,b,a)\big], \quad z\in\C.
\end{split}
\end{align}
\end{example}

In order to obtain a large-$z$ asymptotic expansion of the functions \eqref{3.32} and \eqref{3.33}, we need the 
asymptotic expansion of the transformed fundamental set of solutions $\Phi(z,\xi ,0)$ and $\Theta(z,\xi ,0)$. 
To this end, and since the principal results we are focused on in this section are of a local nature with respect 
to $\xi \in [0,1]$, we now envisage that $V(\dott)$ is continued in a sufficiently smooth and compactly supported manner to a function on $\bbR$ (by a slight abuse of notation still abbreviated by $V$),
\begin{equation}
V \in C_0^N(\bbR) \cap C^{\infty}((-\infty,-1)\cup(2,\infty)),      \lb{3.41a}
\end{equation} 
for $N \in \bbN$ to be determined later on. 
In addition, we consider the associated Weyl--Titchmarsh (resp., Jost) solutions $u_{\pm}(z,\dott)$ such that  
for all $x_0 \in \bbR$, 
\begin{equation}
u_+ (z,\dott) \in L^2([x_0,\infty); d\xi), \quad u_-(z,\dott) \in L^2((-\infty,x_0]; d\xi), \quad \Im\big(z^{1/2}\big) > 0.  
\end{equation}
Writing 
\begin{align}\lb{3.39}
u_{\pm}(z,\xi)=\exp\bigg\{\int_{0}^{\xi}dt\ \cS_{\pm}(z,t)\bigg\}, \quad 
\cS_{\pm} (z,\xi) = \f{\dot u_{\pm}(z,\xi)}{u_{\pm}(z,\xi)}, \quad \xi \in \bbR, \; 
\Im\big(z^{1/2}\big) \geq 0
\end{align} 
(the compact support hypothesis on $V$ on $\bbR$, more generally, a suitable short-range, i.e., integrability assumption on $V$, permits the continuous extension of $\cS_{\pm} (z,\dott)$ to $\Im\big(z^{1/2}\big) \geq 0$), one infers that $\cS_{\pm}(z,\dott)$ satisfy the Riccati differential equation
\begin{equation}
\dot S(z,\xi) + S_{\pm}(z,\xi)^2 - V(\xi) + c^2 z = 0, \quad \xi \in \bbR, \; \Im\big(z^{1/2}\big) \geq 0.   \lb{3.39a} 
\end{equation}
In addition, $\cS_{\pm}(z,\xi)$ represent the half-line Weyl--Titchmarsh functions on $[\xi,+\infty)$, respectively, 
$(-\infty,\xi]$, in particular, for each $\xi \in \bbR$, $\pm S_{\pm}(\dott,\xi)$ are Nevanlinna--Herglotz functions on $\bbC_+$ (i.e., analytic on $\bbC_+$ with strictly positive imaginary part on $\bbC_+$).

Inserting the formal asymptotic expansion 
\begin{align}\lb{3.39b}
\cS_{\pm}(z,\dott) \underset{\substack{|z|\to\infty \\ \Im(z^{1/2}) \geq 0}}{=} \pm ic z^{1/2} 
+\sum_{j=1}^{\infty} (\mp1)^jS_{j}(\dott)  z^{-j/2} 
\end{align}
into the Riccati equation \eqref{3.39a} yields the recursion relation 
\begin{align}
\begin{split} 
& S_{1}(\xi)=[i/(2c)] V(\xi),\quad S_{2}(\xi)=[1/4c^2]\dot V(\xi),  \lb{3.44} \\   
& S_{j+1}(\xi)=- [i/(2c)] \bigg[\dot S_{j}(\xi)+\sum_{k=1}^{j-1}S_{k}(\xi)S_{j-k}(\xi)\bigg], \quad j \in \bbN, \; 
\xi \in \bbR.
\end{split} 
\end{align}
The first few terms $S_{j}(\dott)$ explicitly read 
\begin{align}
\begin{split}
S_{3}(\xi)&=\big[i\big/\big(8c^3\big)\big] \big[V^{2}(\xi)-\ddot V(\xi)\big],   \\
S_{4}(\xi)&=- \big[1/16c^4\big]\big[V^{(3)}(\xi)-4V(\xi)\dot V(\xi)\big],    \\
S_{5}(\xi)&= \big[i\big/\big(32c^5\big)\big] \big[2V^{3}(\xi)-5 \dot V(\xi)^{2}-6V(\xi) \ddot V(\xi)+V^{(4)}(\xi)\big], \\
& \text{etc.}
\end{split}
\end{align}
See \cite[Sects.~5, 6]{GHSZ95} for a variety of closely related asymptotic expansions. 

Assuming \eqref{3.41a}, the formal asymptotic expansion \eqref{3.39a} turns into an actual asymptotic expansion of the the type (see \cite{BM97}), 
\begin{align}\lb{3.47A}
\cS_{\pm}(z,\xi) \underset{\substack{|z|\to\infty \\ \Im(z^{1/2}) \geq 0}}{=} \pm ic z^{1/2} 
+\sum_{j=1}^N (\mp1)^jS_{j}(\xi)  z^{-j/2} + \oh\big(|z|^{-N/2}\big), 
\end{align}
with the $\oh\big(|z|^{-N/2}\big)$-term uniform with respect to $\xi \in [0,1]$. 
\begin{remark} \lb{r3.4}
There is an enormous literature available in connection with asymptotic high-energy expansions of Weyl--Titchmarsh $m$-functions (see, e.g., the detailed list in \cite{CG01}) and the associated spectral function, however, much less can be found in connection with (local) uniformity of the error term $\oh\big(|z|^{-N/2}\big)$ with respect to $x$ in expansions of the type \eqref{3.47A}. Notable exceptions are, for instance, \cite{BM97}, \cite{DL91}, \cite{HKS89}, \cite{KK86}, \cite{Ry01}, \cite{Ry02}.  In particular, \cite{BM97} (see \cite[Sects.~1.4, 3.1]{Ma11}) and \cite{DL91} use the theory of transformation operators, while \cite{HKS89} and \cite{KK86} employ a detailed analysis of the Riccati equation \eqref{3.39a}, and \cite{Ry01}, \cite{Ry02} iterate an underlying Volterra integral equation. 
In addition, we note that the compact support hypothesis on $V$ can be relaxed to the condition
\begin{equation}
\int_{\bbR} (1+|x|)dx \, \big|V^{(\ell)}(x)\big| < \infty, \quad 0 \leq \ell \le N. 
\end{equation}
${}$ \hfill $\diamond$
\end{remark}

The correct asymptotic behavior as $|z|\to\infty$ of any solution $u(z,\dott)$ to \eqref{3.17} is given as a linear combination of $u_{\pm}(z,\dott)$,  
\begin{align}\lb{3.46}
u(z,\xi)=\cA(z)u_+(z,\xi)  + \cB(z)u_-(z,\xi), \quad \Im(z) >0, \; \xi \in [0,1], 
\end{align} 
and one notices that the solutions $u_{\pm}(z,\dott)$ satisfy the initial conditions
\begin{align}
u_{\pm}(z,0)=1,\quad \dot u_{\pm} (z,0)=\cS_{\pm}(z,0), \quad \Im(z) > 0.
\end{align} 
Since $W(u_+(z,\dott), u_-(z,\dott))(\xi) \neq 0$, $\xi \in [0,1]$, one infers that
\begin{equation}
\cS^{+}(z,0)-\cS^{-}(z,0) \neq 0, \quad \Im(z) > 0.
\end{equation}  

By imposing the initial conditions \eqref{3.30} and \eqref{3.31} on the function \eqref{3.46}, one obtains an expression for $\Phi(z,\dott ,0)$ and $\Theta(z,\dott,0)$ suitable for an asymptotic expansion. For instance, in the case of $\Phi(z,\xi ,0)$ one obtains 
\begin{align}
\begin{split} 
\Phi(z,\xi ,0)&=\frac{c  \nu(0)^{-1} }{\cS_-(z,0) - \cS_+(z,0)}
\exp\bigg(\int_{0}^{\xi} d \eta \, \cS_-(z,\eta)\bigg)     \\
&\quad\, \times\bigg[1 - \exp\bigg(\int_{0}^{\xi} d \eta \, [\cS_+(z,\eta)-\cS_-(z,\eta)]\bigg)\bigg].
\end{split} 
\end{align}
Furthermore, for large values of $z$, with $\Im(z)>0$, \eqref{3.47A} implies 
\begin{align}
&\exp\bigg(\int_{0}^{\xi} d \eta \, [\cS_+(z,\eta)-\cS_-(z,\eta)]\bigg)    \no \\
&\quad \underset{\substack{|z|\to\infty \\ \Im(z^{1/2}) \geq 0}}{=} \exp\big(2icz^{1/2}\xi\big) 
\exp\bigg(-2\sum_{n=1}^{N}z^{-n+(1/2)}\int_{0}^{\xi} d \eta \,S_{2n-1}(\eta)\bigg)    \lb{3.49} \\
& \hspace*{1.9cm} \times [1+\oh\big(z^{- N+1/2}\big)].  \no
\end{align} 
Since the integrals on the right-hand side of \eqref{3.49} are finite, one finds 
\begin{align}
\exp\bigg(-2\sum_{n=1}^{N}z^{-n+1/2}\int_{0}^{\xi} d \eta \,S_{2n-1}(\eta)\bigg) 
\underset{\substack{|z|\to\infty \\ \Im(z^{1/2}) \geq 0}}{=} \Oh(1), 
\end{align} 
uniformly in $\xi \in [0,1]$. Relations \eqref{3.47A} and \eqref{3.49} permit one to conclude that   
\begin{align}
\exp\bigg(\int_{0}^{\xi} d \eta \, [\cS_+(z,\eta)-\cS_-(z,\eta)]\bigg) \underset{\substack{|z|\to\infty \\ 
\Im(z^{1/2}) \geq 0}}{=} O\big(e^{2icz^{1/2}}\big),
\end{align} 
uniformly for $\xi\in[0,1]$, and therefore,
\begin{align}
\Phi(z,\xi ,0)& \underset{\substack{|z|\to\infty \\ \Im(z^{1/2}) \geq 0}}{=} \frac{c  \nu(0)^{-1} }{\cS_-(z,0)-\cS_+(z,0)}
\exp\bigg(\int_{0}^{\xi} d \eta \, \cS_-(z,\eta)\bigg)\big[1 + \Oh\big(e^{2icz^{1/2} }\big)\big].
\end{align}
Similar arguments permit one to derive the following expressions:
\begin{align}\lb{3.47}
\Theta(z,\xi ,0)& \underset{\substack{|z|\to\infty \\ \Im(z^{1/2}) \geq 0}}{=} \frac{(c/4) \nu(0)^{-1} Q(0)-\nu(0)S_+(z,0)}{\cS_-(z,0)-\cS_+(z,0)}
\exp\bigg(\int_{0}^{\xi} d \eta \, \cS_-(z,\eta)\bigg)     \no \\
& \hspace*{1.6cm} \times\big[1 + \Oh\big(e^{2icz^{1/2} }\big)\big], \\
\dot \Phi (z,\xi ,0)& \underset{\substack{|z|\to\infty \\ \Im(z^{1/2}) \geq 0}}{=} \frac{c  \nu(0)^{-1} \cS_-(z,1)}{\cS_-(z,0)-\cS_+(z,0)}
\exp\bigg(\int_{0}^{\xi} d \eta \, \cS_-(z,\eta)\bigg)\big[1 + \Oh\big(e^{2icz^{1/2} }\big)\big],   \\
\dot \Theta (z,\xi ,0)& \underset{\substack{|z|\to\infty \\ \Im(z^{1/2}) \geq 0}}{=} \frac{\left[(c/4) \nu(0)^{-1} Q(0)-\nu(0)\cS_+(z,0)\right]\cS_-(z,1)}{\cS_-(z,0)-\cS_+(z,0)}      \no \\ 
& \hspace*{1.6cm} \times \exp\bigg(\int_{0}^{\xi} d \eta \, \cS_-(z,\eta)\bigg) 
\big[1 + \Oh\big(e^{2icz^{1/2} }\big)\big], \lb{3.50}
\end{align}
uniformly with respect to $\xi \in [0,1]$.

By utilizing the expressions \eqref{3.47}--\eqref{3.50} in the characteristic functions \eqref{3.32} and \eqref{3.33} we obtain
\begin{align}\lb{3.51}
\cF_{A,B}(z)& \underset{\substack{|z|\to\infty \\ \Im(z^{1/2}) \geq 0}}{=} \frac{1}{\cS^{-}(z,0)-\cS_-(z,0)}
\exp\bigg(\int_{0}^{1} d \eta \, \cS_-(z,\eta)\bigg)   \\
& \hspace*{1.6cm} \times\left[j_{A,B}+k_{A,B}\cS_+(z,0)+\ell_{A,B}\cS_-(z,1)+m_{A,B}\cS_+(z,0)\cS_-(z,1) \right]     \no \\
& \hspace*{1.6cm} \times\big[1 + \Oh\big(e^{2icz^{1/2} }\big)\big].   \no
\end{align} 
The first line on the right-hand side of \eqref{3.51} is entirely independent of boundary conditions, in particular, it does not distinguish between separated and coupled boundary conditions. In contrast, the terms $j_{A,B},k_{A,B},\ell_{A,B}$, and $m_{A,B}$ in the second line on the right-hand side of \eqref{3.51} encode the specific information about the boundary conditions imposed. In the case of separated boundary conditions, where $A, B$ represents 
$\a, \b$ as in \eqref{2.7} one obtains 
\begin{align}
\begin{split}
j_{\a,\b}&=-\dfrac{c}{\nu(0)\nu(1)}\left[\cos(\b)+ (1/4) \sin(\b)Q(1)\right]\left[\cos(\a)- (1/4) \sin(\a)Q(0)\right],\\
k_{\a,\b}&=-\dfrac{\nu(0)}{\nu(1)}\sin(\a)\left[\cos(\b)+ (1/4) \sin(\b)Q(1)\right],\\
\ell_{\a,\b}&=\dfrac{\nu(1)}{\nu(0)}\sin(\b)\left[\cos(\a)- (1/4) \sin(\a)Q(0)\right],\\
m_{\a,\b}&= (1/c)  \nu(0)\nu(1)\sin(\a)\sin(\b).
\end{split}
\end{align}
In the case of coupled boundary conditions, where $A, B$ represents $\varphi, \wti{R}$ as in 
\eqref{3.27}, \eqref{3.29}, one infers 
\begin{align}
\begin{split}
j_{\varphi,\wti{R}}=-e^{i\varphi}\wti{R}_{21},\quad k_{\varphi,\wti{R}}=-e^{i\varphi}\wti{R}_{22},\quad \ell_{\varphi,\wti{R}}=e^{i\varphi}\wti{R}_{11},\quad m_{\varphi,\wti{R}}=e^{i\varphi}\wti{R}_{12}.
\end{split}
\end{align}

For the purpose of the analytic continuation of the spectral $\zeta$-function, one needs the large-$z$ asymptotic expansion of $\ln(\cF_{A,B}(z))$ rather than the one for $\cF_{A,B}(z)$. For this reason we will focus next on the derivation of the large-$z$ asymptotic expansion of the expression 
\begin{align}\lb{3.54}
& \ln(\cF_{A,B}(z)) \underset{\substack{|z|\to\infty \\ \Im(z^{1/2}) \geq 0}}{=} -\ln\big(\cS_+(z,0)-\cS_-(z,0)\big)
+ \int_{0}^{1} d \eta \, \cS_-(z,\eta)     \no \\
& \quad +\ln\big(j_{A,B}+k_{A,B}\cS_+(z,0)+\ell_{A,B}\cS_-(z,1)+m_{A,B}\cS_+(z,0)\cS_-(z,1)\big)     \\
& \quad +\Oh\big(e^{2icz^{1/2} }\big).     \no 
\end{align} 
We can now use the expansion \eqref{3.39a} in \eqref{3.51} to obtain a large-$z$ asymptotic expansion of \eqref{3.54}.
We start with the part of \eqref{3.54} that is independent of the boundary conditions. 
For the integral in \eqref{3.54} one finds
\begin{align}
\int_{0}^{1} d \eta \, \cS_-(z,\eta) \underset{\substack{|z|\to\infty \\ \Im(z^{1/2}) \geq 0}}{=} -iz^{1/2} c+\sum_{m=1}^{N} z^{-m/2}
\int_{0}^{1} d \eta \, S_{m}(\eta) + \oh\big(z^{-N/2}\big).
\end{align}
For the first term in \eqref{3.54} one concludes that 
\begin{align}\lb{3.56}
\cS_+(z,0)-\cS_-(z,0) \underset{\substack{|z|\to\infty \\ \Im(z^{1/2}) \geq 0}}{=} 
2icz^{1/2} \bigg(1+ (i/c) \sum_{j=1}^{N} S_{2j-1}(0) z^{-j}\bigg) + \oh\big(z^{-N+1/2}\big). 
\end{align}
Relation \eqref{3.56} permits one to write
\begin{align}
\ln\big(\cS_+(z,0)-\cS_-(z,0)\big) \underset{\substack{|z|\to\infty \\ \Im(z^{1/2}) \geq 0}}{=} 
\ln(2ic)+2^{-1}\ln(z)+\sum_{m=1}^{N} D_{2m-1} z^{- m} + \oh\big(z^{-N}\big),
\end{align}  
where the terms $D_{2m-1}$ are determined through the formal asymptotic expansion  
\begin{align}
\ln\bigg(1+ (i/c)\sum_{m=1}^{\infty} S_{2m-1}(0) z^{-m}\bigg)  
= \sum_{j=1}^{\infty} D_j z^{- j}.
\end{align}
We refer to \eqref{4.7}--\eqref{4.9} for a recursive formula for $D_j$ in terms of $(i/c)S_{2m-1}(0)$.  
The first few $D_j$ explicitly read
\begin{align}
\begin{split}
D_{1}&= - V(0)\big/\big[2 c^2\big], \quad D_{2}=\big[\ddot V(0)-2 V(0)^2\big]\big/\big[8 c^4\big],\\
D_{3}&= -\big[3 V^{(4)}(0)-24 V(0) \ddot V(0)-15 \dot V(0)^2+16 V(0)^3\big]\big/\big[96 c^6\big],\\
D_{4}&= \big(128 c^8\big)^{-1}\big[V^{(6)}(0)+48 V(0)^2 \ddot V(0)-20 \ddot V(0)^2-12 V(0)V^{(4)}(0)\\
& \quad +60 V(0) \dot V(0)^2-28V^{(3)}(0) \dot V(0)-16 V(0)^4\big],   \\
& \text{etc.} 
\end{split}
\end{align} 

Computing the asymptotic expansion of the last logarithmic term in \eqref{3.54}, namely the term which depends on the boundary conditions, is somewhat more involved. By using the asymptotic expansion \eqref{3.39a} it is not difficult to find
\begin{align} 
\begin{split} 
& j_{A,B}+k_{A,B}\cS^{-}(z,0)+\ell_{A,B}\cS^{+}(z,1)     \\ 
& \quad \underset{\substack{|z|\to\infty \\ \Im(z^{1/2}) \geq 0}}{=} 
-icz^{1/2} (\ell_{A,B}-k_{A,B}) + \sum_{m=0}^{N} \Delta_{m} z^{- m/2} + \oh\big(z^{-N/2}\big), 
\end{split} 
\end{align}
where 
\begin{align}
\Delta_{0}=j_{A,B},\quad \Delta_{m}=\ell_{A,B}S_{m}(1)+(-1)^{m}k_{A,B}S_{m}(0),\quad m \in \bbN,
\end{align}
and
\begin{align}
m_{A,B}\cS^{-}(z,0)\cS^{+}(z,1) \underset{\substack{|z|\to\infty \\ \Im(z^{1/2}) \geq 0}}{=} m_{A,B}c^{2}z
\bigg(1+\sum_{m=2}^{N} \Lambda_{m} z^{- m/2}\bigg) + \oh\big(z^{-(N-2)/2}\big), 
\end{align} 
where 
\begin{align}
\Lambda_{m}=\sum_{\ell=0}^{m}\Omega^{-}_{\ell}(0)\Omega_{m-\ell}^{+}(1), \quad m \in \bbN, \; m \geq 2, 
\end{align} 
with 
\begin{align}
\Omega^{-}_{0}(0)=\Omega_{0}^{+}(1)=1,\quad \Omega_{j}^{+}(x)=(-1)^{j}\Omega_{j}^{-}(x)
= (i/c) S_{j-1}(x),\quad j \in \bbN.
\end{align} 
The first few $\Lambda_{m}$ have the explicit form, 
\begin{align}
\begin{split}
\Lambda_{2}&=- 2^{-1} c^{-2} [V(1)+V(0)], 
\quad \Lambda_{3}=- i 4^{-1} c^{-3} \big[\dot V(1)+\dot V(0)\big],\\
\Lambda_{4}&= 8^{-1} c^{-4} \big[\ddot V(1)+\ddot V(0)-V(0)^2-V(1)^2+2 V(1) V(0)\big],\\
\Lambda_{5}&=i (16)^{-1} c^{-5} \Big[
V^{(3)}(0)-V^{(3)}(1)-2 V(0) \big(2 \dot V(0)+\dot V(1)\big)    \\
& \quad + 2 V(1) \big[\dot V(0)  + 2 \dot V(1)\big]\Big],  \\
& \text{etc.} 
\end{split}
\end{align}
This finally implies 
\begin{align}\lb{3.68}
\begin{split}
& j_{A,B}+k_{A,B}\cS^{-}(z,0)+\ell_{A,B}\cS^{+}(z,1)+m_{A,B}\cS^{-}(z,0)\cS^{+}(z,1)   \\
& \quad \underset{\substack{|z|\to\infty \\ \Im(z^{1/2}) \geq 0}}{=} 
\sum_{m=-2}^{N} \Gamma_{m} z^{- m/2} + \oh\big(z^{-N/2}\big),
\end{split}
\end{align}
where  
\begin{align}
\begin{split} 
& \Gamma_{-2}=m_{A,B}c^{2},\quad \Gamma_{-1}=-ic(\ell_{A,B}-k_{A,B}),    \\
& \Gamma_{m}=\Delta_{m}+m_{A,B}c^{2}\Lambda_{m+2}, \quad m \in \bbN_0.
\end{split} 
\end{align}

Let $\Gamma_{k_0}$ with $k_0 \in\mathbb{Z}$ and $k_0 \geq -2$, be the first non-vanishing term of the series in \eqref{3.68}. Since $\Gamma_{k_0}\neq 0$ one can write
\begin{align}
& \ln\big(j_{A,B}+k_{A,B}\cS^{-}(z,0)+\ell_{A,B}\cS^{+}(z,1)+m_{A,B}\cS^{-}(z,0)\cS^{+}(z,1)\big)    \\
& \quad \underset{\substack{|z|\to\infty \\ \Im(z^{1/2}) \geq 0}}{=} \ln(\Gamma_{k_0})- (k_0/2) \ln(z) 
+ \ln\bigg(1+\sum_{m=1}^{N} [\Gamma_{m+k_0}/\Gamma_{k_0}] z^{- m/2} + \oh\big(z^{-N/2}\big)\bigg),
\no 
\end{align}
which, in turn, yields
\begin{align}
\begin{split}
& \ln\big(j_{A,B}+k_{A,B}\cS^{-}(z,0)+\ell_{A,B}\cS^{+}(z,1)+m_{A,B}\cS^{-}(z,0)\cS^{+}(z,1)\big) \\
& \quad \underset{\substack{|z|\to\infty \\ \Im(z^{1/2}) \geq 0}}{=} 
\ln(\Gamma_{k_0})- (k_0/2) \ln(z)+\sum_{j=1}^{N} \Pi_{j} z^{- j/2} + \oh\big(z^{-N/2}\big),
\end{split}
\end{align}
where the terms $\Pi_{j}$ are obtained via the formal asymptotic expansion 
\begin{align}
\ln\bigg(1+\sum_{m=1}^{\infty} [\Gamma_{m+k_0}/\Gamma_{k_0}] z^{- m/2}\bigg)
= \sum_{j=1}^{\infty} \Pi_{j} z^{- j/2}. 
\end{align} 
Once again we refer to \eqref{4.7}--\eqref{4.9} for a recursive determination 
of $\Pi_j$ in terms of $\Gamma_{m+k_0}/\Gamma_{k_0}$. The first few $\Pi_{m}$ are explicitly of the form, 
\begin{align}
\Pi_{1}&= \Gamma_{1+k_0}/\Gamma_{k_0}, \quad \Pi_{2}=2^{-1} \Gamma^{-2}_{k_0}\big[2\Gamma_{k_0}\Gamma_{k_0 + 2}-\Gamma^2_{k_0 + 1}\big],   \no \\
\Pi_{3}&= 3^{-1} \Gamma^{-3}_{k_0}
\big[\Gamma^3_{k_0 + 1}-3 \Gamma_{k_0}\Gamma_{k_0 + 2}\Gamma_{k_0 + 1}+3\Gamma^2_{k_0}\Gamma_{k_0 + 3}\big],\\
\Pi_{4}&=- 4^{-1} \Gamma^{-4}_{k_0}
\big[\Gamma^4_{k_0 + 1}-4\Gamma_{k_0}\Gamma_{k_0 + 2}\Gamma^2_{k_0 + 1}+4\Gamma^2_{k_0}\Gamma_{k_0 + 3}\Gamma_{k_0 + 1}   \no \\
& \quad +2\Gamma^2_{k_0}\left(\Gamma^2_{k_0+2}-2\Gamma_{k_0}\Gamma_{k_0+4}\right)\big], \no \\
& \text{etc.}  
\end{align}
More explicit expressions for $\Pi_{m}$ in terms of the potential $V$ and its derivatives can be obtained with a simple computer program once the index $k_0$ has been determined. 

Finally, we can provide the large-$z$ asymptotic expansion of the logarithm of the characteristic function in the form 
\begin{align}\lb{3.74}
\begin{split}
\ln(\cF_{A,B}(z))& \underset{\substack{|z|\to\infty \\ \Im(z^{1/2}) \geq 0}}{=}  
-icz^{1/2} -2^{-1} (k_0 + 1) \ln(z)+\ln(\Gamma_{k_0}/(2ic)) \\
& \hspace*{1.6cm} + \sum_{m=1}^{N} \Psi_{m} z^{- m/2} + \oh\big(z^{-N/2}\big), 
\end{split}
\end{align}
where  
\begin{align}
\begin{split}
&\Psi_{2n}=\int_{0}^{1} d \eta \, S_{2n}(\eta)-D_{2n-1}+\Pi_{2n},\quad n\in\mathbb{N},\\
&\Psi_{2n+1}=\int_{0}^{1} d \eta \, S_{2n+1}(\eta)+\Pi_{2n+1},\quad n\in\mathbb{N}_{0}.
\end{split}
\end{align}

\subsection{Analytic Continuation of the Spectral Zeta Function and the Zeta Regularized Functional Determinant} \lb{s3.3}
\hfill

In order to perform the analytic continuation of the spectral $\zeta$-function, we need to investigate the specific behavior for $z\downarrow 0$ and $|z|\to\infty$. The characteristic function $\cF_{A,B}(z)$ is constructed as a linear combination of the basis functions $\phi(z,\dott,a)$ and $\theta(z,\dott,a)$ (or 
equivalently the transformed basis functions $\Phi(z,\dott,0)$ and $\Theta(z,\dott,0)$) and their first quasi-derivatives. We have proved that $\phi(z,\dott,a)$ and $\theta(z,\dott,a)$, and consequently $\Phi(z,\dott,0)$ and $\Theta(z,\dott,0)$, have a small-$z$ asymptotic expansion in the form of a power series in the variable $z$ in Section \ref{s3.1}.
This implies that, in general, the characteristic function $\cF_{A,B}(z)$ has a small-$z$ asymptotic expansion of the form
\begin{align}\lb{3.76}
\cF_{A,B}(z)=\cF_{m_0} z^{m_0}+\sum_{m=m_0 +1}^{\infty}\cF_{m}z^{m},
\end{align}  
where $m_0 \in\{0,1,2\}$ represents the multiplicity of the zero eigenvalue and $\cF_{m_0}\neq 0$. The asymptotic expansion \eqref{3.76} suggests that the appropriate characteristic function to use 
in the integral representation of the spectral $\zeta$-function is $z^{-{m_0}}\cF_{A,B}(z)$ rather than simply $\cF_{A,B}(z)$ (obviously the two coincide when no zero eigenvalue is present). In this case it is easy to verify that 
\begin{align}
\frac{d}{dz}\ln\left(\cF_{A,B}(z) z^{-m_0}\right)\underset{|z| \downarrow 0}{=} \Oh(1).   \lb{3.85a}
\end{align}

From the large-$z$ asymptotic expansion \eqref{3.74} of the characteristic function, namely,
\begin{align}
\begin{split} 
\ln(\cF_{A,B}(z)) &\underset{\substack{|z|\to\infty \\ \Im(z^{1/2}) \geq 0}}{=} 
-icz^{1/2} - [(k_0 + 1)/2] \ln(z) + \ln(\Gamma_{k_0}/(2ic))     \\
& \hspace*{1.6cm} + \sum_{m=1}^N \Psi_{m} z^{- m/2} + \oh\big(|z|^{-N/2}\big),  
\end{split} 
\end{align}   
one readily infers that   
\begin{align}
\frac{d}{dz}\ln(\cF_{A,B}(z) z^{- m_0}) \underset{\substack{|z|\to\infty \\ \Im(z^{1/2}) \geq 0}}{=} 
\Oh\big(|z|^{-1/2}\big).   \lb{3.87}
\end{align}

The asymptotic behaviors in \eqref{3.85a} and \eqref{3.87} justify deforming the contour $\gamma$ in the integral representation \eqref{2.40} to one that surrounds the branch cut $R_{\psi}$ as shown in Figure \ref{f2}. This contour deformation leads to the following integral representation (with $\psi$ introduced in \eqref{2.40a}) 
\begin{align}\lb{3.80}
\zeta(s;T_{A,B})=e^{is(\pi-\psi)}\pi^{-1} \sin(\pi s) \int_{0}^{\infty} dt \, t^{-s}
\frac{d}{dt}\ln\left(\cF_{A,B}(te^{i\psi}) t^{- m_0}e^{- i m_0 \psi}\right),
\end{align}
which is valid in the region $1/2<\Re(s)<1$. To obtain the analytic continuation of \eqref{3.80} to the left of the abscissa of convergence $\Re(s)=1/2$ we subtract and then add
$N$ terms of the large-$z$ asymptotic expansion of $\ln\left(\cF_{A,B}(te^{i\psi})
t^{- m_0} e^{-i m_0 \psi}\right)$. This process leads to the 
following expression of the spectral $\zeta$-function
\begin{align}\lb{3.81}
\zeta(s;T_{A,B})=Z(s,A,B)+\sum_{j=-1}^{N}h_{j}(s,A,B),
\end{align}
which is valid in the region $-(N+1)/2<\Re(s)<1$. The explicit form of the functions in the analytically continued expression of $\zeta(s;T_{A,B})$ in \eqref{3.81} is 
\begin{align}\lb{3.82}
Z(s,A,B)& =e^{is(\pi-\psi)}\pi^{-1} \sin(\pi s) 
\int_{0}^{\infty} dt \, t^{-s}\frac{d}{dt}\bigg\{\ln\left(\cF_{A,B}(te^{i\psi})t^{-m_0}e^{- i m_0 \psi}\right)
\no \\
& \quad -H(t-1)\bigg[-ic t^{1/2}e^{i \psi/2} - [((k_0 + 1)/2)+ m_0] \ln (t)    \\
& \quad - \big[((k_0 + 1)/2)+ m_0\big] i \psi + \ln(\Gamma_{k_0}/(2ic))  
+\sum_{n=1}^{N}\Psi_{n} e^{-in \psi/2} t^{- n/2}\bigg]\bigg\},     \no 
\end{align}
where $H(s) = \begin{cases} 1, & s > 0, \\
0, & s < 0, \end{cases}$  represents the Heaviside function, and 
\begin{align}\lb{3.83}
\begin{split}
h_{-1}(s,A,B)&=-ie^{is(\pi-\psi)}\pi^{-1} \sin(\pi s) c\,e^{i \psi/2}/(2s-1),   \\
h_{0}(s,A,B)&=- (k_0 + 1 + 2 m_0) e^{is(\pi-\psi)} (2 \pi s)^{-1} \sin(\pi s),  \\
h_{n}(s,A,B)&=-e^{is(\pi-\psi)} \pi^{-1} \sin(\pi s) [n/(2s+n)] e^{-in \psi/2} \Psi_{n},\quad n \in \bbN.
\end{split}
\end{align} 

Given the expression \eqref{3.81} we are now able to compute the zeta regularized functional determinant in terms of $\zeta'(0;T_{A,B})$ as in \cite[Thm. 2.9]{GK19}.
For the purpose of computing $\zeta'(0;T_{A,B})$, it is sufficient to set $N=0$ in \eqref{3.81} to obtain
\begin{align}
\zeta'(0;T_{A,B})=Z'(0,A,B)+h'_{-1}(0,A,B)+h'_{0}(0,A,B).
\end{align} 

By computing the derivative with respect to $s$ of \eqref{3.82} and the first two expressions in \eqref{3.83} at $s=0$ one obtains the remarkably simple formula
\begin{align}\lb{3.85}
\zeta'(0;T_{A,B})=i\pi n-\ln(2c|\cF_{m_0}/\Gamma_{k_0}|),
\end{align}
where $n$ is the number of strictly negative eigenvalues of $T_{A,B}$.

\section{Computing Spectral Zeta Function Values and Traces for Regular Sturm--Liouville Operators} \lb{s4}

We have now completed the necessary preparations to give the main theorem for computing values of the spectral $\zeta$-function for self-adjoint regular Sturm--Liouville operators when imposing either separated or coupled boundary conditions. When zero is not an eigenvalue we also find an expression for computing the trace of the inverse Sturm--Liouville operator.

\begin{theorem} \lb{t4.1}
Assume Hypothesis \ref{h2.1}, denote by $T_{A,B}$ the self-adjoint extension of $T_{min}$ with either separated or coupled boundary conditions as described in Theorem \ref{t2.2}, and 
let $m_0 = 0,1,2$, denote the multiplicity of zero as an eigenvalue of $T_{A,B}$ $($with $m_0 = 0$ denoting zero is not an eigenvalue$)$. Suppose that $F_{A,B}(z)$ given in \eqref{2.40} has the series expansion, 
\begin{align}\lb{4.1}
    F_{A,B}(z)=\sum_{j=0}^\infty a_jz^j,\quad 0\leq |z|\text{ sufficiently small}.
\end{align}
Then,
\begin{align}\lb{4.2}
    \zeta(n;T_{A,B})=-\Res \left[ z^{-n}\dfrac{d}{dz}\ln (F_{A,B}(z));\ z=0 \right]=-n \, b_n, \quad n \in \bbN, 
\end{align}
where
\begin{align}\lb{4.3}
    \begin{split}
        &b_1= a_{1+m_0}/a_{m_0},\\
        &b_j= [a_{j+m_0}/a_{m_0}]-\sum_{\ell=1}^{j-1} [\ell/j]
        [a_{j-\ell+m_0}/a_{m_0}] b_{\ell},\quad j \in \bbN, \; j\geq 2.
    \end{split}
\end{align}
In particular, if zero is not an eigenvalue of $T_{A,B}$, then
\begin{align}\lb{4.4}
    \text{\rm tr}_{L^2_r((a,b))}\big(T_{A,B}^{-1}\big)=\zeta(1;T_{A,B})=- a_{1}/a_{0}.
\end{align}
\end{theorem}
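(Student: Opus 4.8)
The plan is to begin from the residue formula \eqref{2.42}, which has already been established in Section \ref{s2}, and to reduce the evaluation of $\zeta(n;T_{A,B})$ to the extraction of a single Taylor coefficient of a logarithmic derivative. Since the zeros of $F_{A,B}$ coincide with the eigenvalues of $T_{A,B}$ with matching multiplicity, the order of vanishing of $F_{A,B}$ at $z=0$ equals $m_0$; hence in \eqref{4.1} one has $a_0=\cdots=a_{m_0-1}=0$ and $a_{m_0}\neq0$. I would therefore factor out the zero by writing $F_{A,B}(z)=z^{m_0}\widetilde F(z)$, where $\widetilde F(z)=\sum_{j=0}^\infty a_{j+m_0}z^j$ is holomorphic near $0$ with $\widetilde F(0)=a_{m_0}\neq0$, so that
\[
\frac{d}{dz}\ln(F_{A,B}(z)) = \frac{m_0}{z} + \frac{d}{dz}\ln(\widetilde F(z)).
\]

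Next I would observe that for $n\in\bbN$ the term $m_0/z$ makes no contribution to the residue in \eqref{4.2}: indeed $z^{-n}(m_0/z)=m_0\, z^{-n-1}$ carries no $z^{-1}$ coefficient once $n\geq1$ (this is the analytic counterpart of the remark, made before \eqref{2.42}, that $m_0$ drops out at integer $s$). Because $\widetilde F(0)\neq0$, the function $\frac{d}{dz}\ln(\widetilde F(z))$ is holomorphic at $z=0$ and admits a Taylor expansion $\frac{d}{dz}\ln(\widetilde F(z))=\sum_{k=0}^\infty c_k z^k$. Consequently the residue at $z=0$ of $z^{-n}\frac{d}{dz}\ln(\widetilde F(z))$ is the coefficient $c_{n-1}$, and \eqref{4.2} reduces to $\zeta(n;T_{A,B})=-c_{n-1}$.

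It then remains to identify $c_{n-1}$ with $n\,b_n$. I would set $b_j$ equal to the Taylor coefficients of $\ln(\widetilde F(z)/a_{m_0})=\sum_{j=1}^\infty b_j z^j$; differentiating this series gives $c_k=(k+1)b_{k+1}$, whence $c_{n-1}=n\,b_n$ and therefore $\zeta(n;T_{A,B})=-n\,b_n$, establishing \eqref{4.2}. To verify that these $b_j$ obey precisely the recursion \eqref{4.3}, I would use the identity $\widetilde F'(z)=\widetilde F(z)\,\frac{d}{dz}\ln(\widetilde F(z))$ and compare Taylor coefficients on both sides. Matching the coefficient of $z^{n}$ yields $(n+1)a_{n+1+m_0}=\sum_{k=0}^{n}a_{n-k+m_0}\,c_k$; isolating the $k=n$ term (which equals $a_{m_0}c_n$), substituting $c_k=(k+1)b_{k+1}$, and reindexing via $j=n+1$, $\ell=k+1$ reproduces \eqref{4.3} line by line, the empty sum giving $b_1=a_{1+m_0}/a_{m_0}$. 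This generating-function bookkeeping — keeping the index shifts consistent under the substitution $c_k=(k+1)b_{k+1}$ — is the only place demanding care, and I expect it to be the main (though routine) obstacle.

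Finally, for \eqref{4.4} I would specialize to $m_0=0$, so that $b_1=a_1/a_0$ and $\zeta(1;T_{A,B})=-b_1=-a_1/a_0$. The eigenvalue asymptotics $\lambda_{A,B,j}\sim \const\cdot j^2$ recorded in Remark \ref{r2.3} (cf.\ \eqref{2.27}) show that $T_{A,B}^{-1}$ is trace class with $\tr_{L^2_r((a,b))}(T_{A,B}^{-1})=\sum_{j}\lambda_{A,B,j}^{-1}=\zeta(1;T_{A,B})$, which completes the identification and hence the proof.
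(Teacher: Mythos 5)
Your proposal is correct and follows essentially the same route as the paper's proof: factor out the zero of order $m_0$, discard the non-contributing term $m_0 z^{-n-1}$, and read off the residue as $n\,b_n$ from the Taylor coefficients of $\ln\big(\widetilde F(z)/a_{m_0}\big)$; your verification of the recursion \eqref{4.3} via $\widetilde F'=\widetilde F\,(d/dz)\ln(\widetilde F)$ is precisely the standard derivation of the logarithm-series recursion that the paper invokes as \eqref{4.7}--\eqref{4.9}. The only (equally valid) deviation is in \eqref{4.4}, where you identify $\tr_{L^2_r((a,b))}\big(T_{A,B}^{-1}\big)$ with $\zeta(1;T_{A,B})$ through trace-class membership coming from the eigenvalue asymptotics of Remark \ref{r2.3}, whereas the paper simply evaluates the trace formula of Theorem \ref{t2.4} at $z=0$.
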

\begin{proof} 
The residue in equation \eqref{4.2} coincides with the $z^{-1}$ coefficient of the Laurent expansion, in the neighborhood of $z=0$, of the integrand in
\eqref{2.42}. By using the expansion \eqref{4.1} one obtains, for $|z|\geq 0$ sufficiently small and for $n\in\N$, that
\begin{align}
z^{-n}\dfrac{d}{dz}\ln (F_{A,B}(z))=z^{-n}\dfrac{d}{dz}\ln\bigg(\sum_{j=0}^\infty a_j z^j\bigg).
\end{align}
Since $z=0$ can be an eigenvalue of multiplicity at most 2, the expansion can be rewritten as follows, 
\begin{align}
\no  z^{-n}\dfrac{d}{dz}\ln (F_{A,B}(z))&=z^{-n}\dfrac{d}{dz}\ln\bigg(\sum_{j=m_0}^\infty a_j z^j\bigg)\\
&\no =z^{-n}\dfrac{d}{dz}\bigg(\ln\big(a_{m_0}  z^{m_0}\big) 
+ \ln\bigg(1+\sum_{j=1}^\infty [a_{j+m_0}/a_{m_0}] z^j\bigg)\bigg)\\
&= m_0 z^{-n-1}+z^{-n}\dfrac{d}{dz}\ln\bigg(1+\sum_{j=1}^\infty [a_{j+m_0}/a_{m_0}] z^j\bigg).
\end{align}
Since $n\in\N$, the term $m_0 z^{-n-1}$ never contributes to the residue and the only contribution comes from the $z^n$ coefficient of the small-$|z|$ asymptotic expansion of the logarithm on the right-hand side. This expansion can be obtained by making use of the fact that if $F$ has the analytic expansion
\begin{align}\lb{4.7}
    F(z)=\sum_{m=1}^\infty c_m z^m,\quad 0\leq |z|\text{ sufficiently small},
\end{align}
then
\begin{align}\lb{4.8}
    \ln(1+F(z)) = \sum_{m=1}^\infty d_m z^m, \quad 0\leq |z|\text{ sufficiently small},
\end{align}
where
\begin{equation}\lb{4.9}
d_1=c_1, \quad d_j=c_j-\sum_{\ell=1}^{j-1} [\ell /j] c_{j-\ell}d_{\ell},\quad j \in \bbN, \; j\geq 2.
\end{equation}
By using \eqref{4.8} one obtains
\begin{align}
\ln\bigg(1+\sum_{j=1}^\infty [a_{j+m_0}/a_{m_0}] z^j\bigg)=\sum_{j=1}^\infty b_j z^j,
\end{align}
with the coefficients $b_j$ given by equation \eqref{4.3}. From the last expansion one finally obtains
\begin{align}
z^{-n}\dfrac{d}{dz}\ln (F_{A,B}(z))=z^{-n}\dfrac{d}{dz}\ln \bigg(\sum_{j=1}^\infty a_j z^j\bigg)=\sum_{j=1}^\infty j b_j z^{j-n-1}.
\end{align}
This is the Laurent expansion, and from it one can deduce that 
\begin{align}
\Res \left[ z^{-n}\dfrac{d}{dz}\ln (F_{A,B}(z));\ z=0 \right]=n \, b_n,\quad n\in\N,
\end{align}
proving \eqref{4.2}.

Assertion \eqref{4.4} about the trace of the inverse operator when $z=0$ is not an eigenvalue is obtained by noting
\begin{align}
-\dfrac{d}{dz}\ln(F_{A,B}(z))\bigg|_{z=0}=-d_1=- a_1/a_0 
\end{align}
from the analytic expansions \eqref{4.7} and \eqref{4.8}, and upon applying Theorem \ref{t2.4}.
\end{proof}

This theorem allows one to utilize the series expansions found in the previous section in order to express the $\zeta$-function values for each of the boundary conditions considered.

\subsection{Computing Spectral Zeta Function Values and Traces for Separated Boundary Conditions} \lb{s4.1}
\hfill

We begin by applying Theorem \ref{t4.1} to find an expression for values of $\zeta(n;T_{\a,\b})$ when imposing separated boundary conditions.

\begin{theorem}\lb{t4.2}
Assume Hypothesis \ref{h2.1}, consider $T_{\a,\b}$ as described in Theorem \ref{t2.2}\,$(i)$, and let $m_0=0,1$, denote the multiplicity of zero as an eigenvalue of $T_{\a,\b}$. Then, 
\begin{align}\lb{4.14}
    \zeta(n;T_{\a,\b})=-\Res \left[ z^{-n}\dfrac{d}{dz}\ln (F_{\a,\b}(z));\ z=0 \right]=-n \, b_n, \quad n \in \bbN, 
\end{align}
where
\begin{align}
&\no \resizebox{\textwidth}{!}{$b_1=\dfrac{\cos(\a)\big[\cos(\b) \phi_{1+m_0}(b)-\sin(\b) \phi^{[1]}_{1+m_0}(b)\big]-\sin(\a)\big[\cos(\b) \theta_{1+m_0}(b)-\sin(\b) \theta^{[1]}_{1+m_0}(b)\big]}{\cos(\a)\big[\cos(\b) \phi_{m_0}(b)-\sin(\b) \phi^{[1]}_{m_0}(b)\big]-\sin(\a)\big[\cos(\b) \theta_{m_0}(b)-\sin(\b) \theta^{[1]}_{m_0}(b)\big]},$}\\
\no \\
&\no \resizebox{\textwidth}{!}{$b_j=\dfrac{\cos(\a)\big[\cos(\b) \phi_{j+m_0}(b)-\sin(\b) \phi^{[1]}_{j+m_0}(b)\big]-\sin(\a)\big[\cos(\b) \theta_{j+m_0}(b)-\sin(\b) \theta^{[1]}_{j+m_0}(b)\big]}{\cos(\a)\big[\cos(\b) \phi_{m_0}(b)-\sin(\b) \phi^{[1]}_{m_0}(b)\big]-\sin(\a)\big[\cos(\b) \theta_{m_0}(b)-\sin(\b) \theta^{[1]}_{m_0}(b)\big]}$}\\
&\no \resizebox{\textwidth}{!}{$\quad\quad\ \ -\displaystyle\sum_{\ell=1}^{j-1}\left(\dfrac{\ell}{j}\right)\dfrac{\cos(\a)\big[\cos(\b) \phi_{j-\ell+m_0}(b)-\sin(\b) \phi^{[1]}_{j-\ell+m_0}(b)\big]-\sin(\a)\big[\cos(\b) \theta_{j-\ell+m_0}(b)-\sin(\b) \theta^{[1]}_{j-\ell+m_0}(b)\big]}{\cos(\a)\big[\cos(\b) \phi_{m_0}(b)-\sin(\b) \phi^{[1]}_{m_0}(b)\big]-\sin(\a)\big[\cos(\b) \theta_{m_0}(b)-\sin(\b) \theta^{[1]}_{m_0}(b)\big]}b_\ell,$}\\
&\hspace{9cm} j \in \bbN, \; j\geq 2.\lb{4.15}
\end{align}
In particular, if zero is not an eigenvalue of $T_{\a,\b}$, then
\begin{align}
    &\text{\rm tr}_{L^2_r((a,b))}\big(T^{-1}_{\a,\b}\big)=\zeta(1;T_{\a,\b})\\
    &\resizebox{\textwidth}{!}{$\quad=-\dfrac{\cos(\a)\big[\cos(\b)\ \phi_{1}(b)-\sin(\b)\ \phi^{[1]}_{1}(b)\big]-\sin(\a)\big[\cos(\b)\ \theta_{1}(b)-\sin(\b)\ \theta^{[1]}_{1}(b)\big]}{\cos(\a)\big[\cos(\b)\ \phi_{0}(b)-\sin(\b)\ \phi^{[1]}_{0}(b)\big]-\sin(\a)\big[\cos(\b)\ \theta_{0}(b)-\sin(\b)\ \theta^{[1]}_{0}(b)\big]}.$}\no 
\end{align}
\end{theorem}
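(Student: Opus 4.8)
The plan is to read Theorem~\ref{t4.2} as the explicit specialization of Theorem~\ref{t4.1} to the separated boundary conditions of Theorem~\ref{t2.2}\,$(i)$; thus the whole task reduces to computing the Taylor coefficients $a_j$ in \eqref{4.1} for this particular characteristic function and substituting them into the recursion \eqref{4.3}.

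First I would start from the closed form \eqref{2.16} of the characteristic function for separated boundary conditions, rewritten as
\[
F_{\a,\b}(z) = \cos(\a)\big[\cos(\b)\,\phi(z,b,a) - \sin(\b)\,\phi^{[1]}(z,b,a)\big] - \sin(\a)\big[\cos(\b)\,\theta(z,b,a) - \sin(\b)\,\theta^{[1]}(z,b,a)\big].
\]
Each of the four functions appearing here admits, by Section~\ref{s3.1}, a convergent power series in $z$ for small $|z|$, namely \eqref{3.4}, \eqref{3.6}, \eqref{3.8}, and \eqref{3.11}, evaluated at $x=b$. Since $\cos(\a),\sin(\a),\cos(\b),\sin(\b)$ are independent of $z$, I may insert these series and collect powers of $z$ term by term (the interchange being trivial, as it is a fixed finite linear combination of convergent series). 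This identifies the expansion \eqref{4.1} with
\[
a_j = \cos(\a)\big[\cos(\b)\,\phi_j(b) - \sin(\b)\,\phi^{[1]}_j(b)\big] - \sin(\a)\big[\cos(\b)\,\theta_j(b) - \sin(\b)\,\theta^{[1]}_j(b)\big], \quad j \in \bbN_0.
\]

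With these $a_j$ in hand, the formula \eqref{4.2} and the recursion \eqref{4.3} of Theorem~\ref{t4.1} apply verbatim. Substituting the displayed $a_{m_0}$, $a_{j+m_0}$, and $a_{j-\ell+m_0}$ turns the ratios $a_{j+m_0}/a_{m_0}$ and $a_{j-\ell+m_0}/a_{m_0}$ into exactly the quotients recorded in \eqref{4.15}, yielding \eqref{4.14}; the case $n=1$ together with \eqref{4.4} gives the stated expression for $\tr_{L^2_r((a,b))}\big(T^{-1}_{\a,\b}\big)$ when zero is not an eigenvalue, upon inserting $a_0$ and $a_1$.

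There is no serious obstacle here: the content is a substitution followed by an appeal to Theorem~\ref{t4.1}. The only point deserving explicit comment is why the multiplicity is restricted to $m_0 \in \{0,1\}$ rather than the $\{0,1,2\}$ allowed in Theorem~\ref{t4.1}. This is because separated boundary conditions of the form \eqref{2.7} force any eigenfunction to be proportional to the unique (up to a scalar) solution satisfying the endpoint condition at $a$, so the zero eigenspace is at most one-dimensional; consequently the leading nonvanishing coefficient is $a_{m_0}\neq 0$ with $m_0 \leq 1$, which is precisely what makes the ratios in \eqref{4.15} well defined.
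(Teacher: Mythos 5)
Your proposal is correct and follows essentially the same route as the paper: substitute the small-$z$ expansions \eqref{3.4}, \eqref{3.6}, \eqref{3.8}, \eqref{3.11} into \eqref{2.16}, read off the coefficients $a_k$, and apply Theorem \ref{t4.1}. Your added remark on why $m_0\leq 1$ for separated boundary conditions (simplicity of eigenvalues, since the eigenspace is pinned down by the condition at $a$) is a correct justification of a point the paper's proof leaves implicit.
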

\begin{proof}
One substitutes \eqref{3.4}, \eqref{3.6}, \eqref{3.8}, and \eqref{3.11} into equation \eqref{2.16} for $\a,\b\in [0,\pi)$ to find
\begin{align}\lb{4.17}
\begin{split} 
F_{\a,\b}(z)=\sum_{m=0}^\infty \big\{&\cos(\a)\big[\cos(\b)\ \phi_m(b)-\sin(\b)\ \phi^{[1]}_m(b)\big]\\
&-\sin(\a)\big[\cos(\b)\ \theta_m(b)-\sin(\b)\ \theta^{[1]}_m(b)\big]\big\}z^m.
\end{split}
\end{align}
From \eqref{4.17} one proves
the assertion by applying Theorem \ref{t4.1} with
\begin{align}
\begin{split}
a_k&=\cos(\a)\big[\cos(\b)\ \phi_k(b)-\sin(\b)\ \phi^{[1]}_k(b)\big]\\
& \quad -\sin(\a)\big[-\sin(\b)\ \theta^{[1]}_k(b)+\cos(\b)\ \theta_k(b)\big],\quad k\in\N.
\end{split}
\end{align}
\end{proof}

We now give a few corollaries that will be of use in the context of specific boundary conditions. One notes that for Dirichlet boundary conditions one has $\a=\b=0$ and for Neumann boundary conditions one has $\a=\b=\pi/2$.

\begin{corollary}[Dirichlet boundary conditions] \lb{c4.3}
Assume Hypothesis \ref{h2.1}, consider $T_{0,0}$ as described in case Theorem \ref{t2.2}\,$(i)$, and let $m_0=0,1$, denote the multiplicity of zero as an eigenvalue of $T_{0,0}$. Then,
\begin{align}\lb{4.19}
    \zeta(n;T_{0,0})=-\Res \left[ z^{-n}\dfrac{d}{dz}\ln (F_{0,0}(z));\ z=0 \right]=-n \, b_n, \quad n \in \bbN, 
\end{align}
where
\begin{align}\lb{4.20}
    \begin{split}
        &b_1= \phi_{1+m_0}(b)/\phi_{m_0}(b),\\
        &b_j= [\phi_{j+m_0}(b)/\phi_{m_0}(b)] 
        - \sum_{\ell=1}^{j-1} [\ell/j] [\phi_{j-\ell+m_0}(b)/\phi_{m_0}(b)] b_{\ell}, 
        \quad j \in \bbN, \; \geq 2.
    \end{split}
\end{align}
In particular, if zero is not an eigenvalue of $T_{0,0}$, then
\begin{align}
    \text{\rm tr}_{L^2_r((a,b))}\big(T^{-1}_{0,0}\big)=\zeta(1;T_{0,0})=- \phi_1(b)/\phi_0(b).
\end{align}
\end{corollary}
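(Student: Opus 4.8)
The plan is to derive Corollary \ref{c4.3} as the special case $\a = \b = 0$ of Theorem \ref{t4.2}, which in turn follows from the master formula in Theorem \ref{t4.1}. First I would substitute $\a = \b = 0$ into the characteristic function \eqref{2.16}, which collapses to $F_{0,0}(z) = \phi(z,b,a)$ since $\cos(0) = 1$ and $\sin(0) = 0$ kill every term except the one multiplying $\cos(\a)\cos(\b)\,\phi(z,b,a)$. Then, using the power series \eqref{3.4}, namely $\phi(z,b,a) = \sum_{m=0}^\infty z^m \phi_m(b)$, I would identify the Taylor coefficients $a_k = \phi_k(b)$ for $k \in \bbN_0$. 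Equivalently, one reads off the same identification directly from the general expansion \eqref{4.17} of Theorem \ref{t4.2} by setting $\a = \b = 0$ there.

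With the coefficients $a_k = \phi_k(b)$ in hand, the formulas \eqref{4.19}--\eqref{4.20} are immediate instances of \eqref{4.2}--\eqref{4.3}: substituting $a_{j+m_0} \mapsto \phi_{j+m_0}(b)$ and $a_{m_0} \mapsto \phi_{m_0}(b)$ into the recursion \eqref{4.3} produces exactly \eqref{4.20}, and the residue identity \eqref{4.19} is just \eqref{4.2} with $F_{A,B}$ replaced by $F_{0,0}$. The trace assertion for the case that zero is not an eigenvalue follows the same way from \eqref{4.4}, giving $\operatorname{tr}_{L^2_r((a,b))}(T_{0,0}^{-1}) = \zeta(1;T_{0,0}) = -a_1/a_0 = -\phi_1(b)/\phi_0(b)$.

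The only point requiring a moment of care is that Theorem \ref{t4.1} presumes $F_{A,B}(z) = \sum_{j=0}^\infty a_j z^j$ is a genuine analytic series near $z=0$ with the stated multiplicity structure; here I would note that the convergence and the identification $a_{m_0} \neq 0$ are guaranteed because $\phi(z,b,a)$ is entire in $z$ (being a solution of $\tau y = zy$ with $z$-independent initial data, cf.\ Remark \ref{r2.3}) and because $m_0 \in \{0,1\}$ is precisely the order of vanishing of $\phi(z,b,a)$ at $z=0$ under Dirichlet conditions. Since $\phi_{m_0}(b)$ is the leading nonzero coefficient, the ratios in \eqref{4.20} are well defined. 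Because everything reduces mechanically to Theorem \ref{t4.1}, there is no substantive obstacle; the main thing to get right is the bookkeeping that under Dirichlet conditions $F_{0,0} = \phi(z,b,a)$ alone, so the $\theta$-terms and all $\sin$-weighted quasi-derivative terms drop out and the general double-row structure of \eqref{4.15} simplifies to the single-solution form in \eqref{4.20}.
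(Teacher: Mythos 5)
Your proposal is correct and is exactly the paper's argument: the paper's proof of Corollary \ref{c4.3} is the one-line specialization ``take $\a=\b=0$ in Theorem \ref{t4.2},'' and your write-up simply spells out what that substitution does (collapsing \eqref{2.16} to $F_{0,0}(z)=\phi(z,b,a)$, identifying $a_k=\phi_k(b)$, and invoking the recursion of Theorem \ref{t4.1}). The extra remarks on entirety of $\phi(\dott,b,a)$ and on $\phi_{m_0}(b)\neq 0$ are sound bookkeeping already built into Theorem \ref{t4.1}'s hypotheses.
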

\begin{proof}
Take $\a=\b=0$ in Theorem \ref{t4.2}.
\end{proof}

In particular, one finds explicitly for $n=2,3,4$, when zero is not an eigenvalue of $T_{0,0}$:
\begin{align}
\no  &\zeta(2;T_{0,0})=-2b_2=-2\left[\dfrac{\phi_2(b)}{\phi_0(b)}-\dfrac{[\phi_1(b)]^2}{2[\phi_0(b)]^2}\right],\\
&\zeta(3;T_{0,0})=-3b_3=-3\Bigg[\dfrac{\phi_3(b)}{\phi_0(b)}-\dfrac{\phi_1(b)\phi_2(b)}{[\phi_0(b)]^2}+\dfrac{[\phi_1(b)]^3}{3[\phi_0(b)]^3}\Bigg],\\
\begin{split}
\no  &\resizebox{\textwidth}{!}{$\zeta(4;T_{0,0})=-4b_4=-4\Bigg[\dfrac{\phi_4(b)}{\phi_0(b)}-\dfrac{\phi_1(b)\phi_3(b)}{[\phi_0(b)]^2}-\dfrac{[\phi_2(b)]^2}{2[\phi_0(b)]^2}+\dfrac{[\phi_1(b)]^2\phi_2(b)}{[\phi_0(b)]^3}-\dfrac{[\phi_1(b)]^4}{4[\phi_0(b)]^4}\Bigg].$}
\end{split}
\end{align}

One also finds explicitly for $n=2,3,4$, when zero is a simple eigenvalue of $T_{0,0}$:
\begin{align}
\no  &\zeta(2;T_{0,0})=-2b_2=-2\left[\dfrac{\phi_3(b)}{\phi_1(b)}-\dfrac{[\phi_2(b)]^2}{2[\phi_1(b)]^2}\right],\\
&\zeta(3;T_{0,0})=-3b_3=-3\Bigg[\dfrac{\phi_4(b)}{\phi_1(b)}-\dfrac{\phi_2(b)\phi_3(b)}{[\phi_1(b)]^2}+\dfrac{[\phi_2(b)]^3}{3[\phi_1(b)]^3}\Bigg],\\
\begin{split}
\no &\resizebox{\textwidth}{!}{$\zeta(4;T_{0,0})=-4b_4=-4\Bigg[\dfrac{\phi_5(b)}{\phi_1(b)}-\dfrac{\phi_2(b)\phi_4(b)}{[\phi_1(b)]^2}-\dfrac{[\phi_3(b)]^2}{2[\phi_1(b)]^2}+\dfrac{[\phi_2(b)]^2\phi_3(b)}{[\phi_1(b)]^3}-\dfrac{[\phi_2(b)]^4}{4[\phi_1(b)]^4}\Bigg].$}
\end{split}
\end{align}

\begin{corollary}[Dirichlet boundary condition at $a$]\lb{c4.4}
Assume Hypothesis \ref{h2.1}, consider $T_{0,\b}$ as described in Theorem \ref{t2.2}\,$(i)$, and 
let $m_0=0,1$, denote the multiplicity of zero as an eigenvalue of $T_{0,\b}$. Then,
\begin{align}\lb{4.24}
    \zeta(n;T_{0,\b})=-\Res \left[ z^{-n}\dfrac{d}{dz}\ln (F_{0,\b}(z));\ z=0 \right]=-n \, b_n, \quad n \in \bbN, 
\end{align}
where
\begin{align}\lb{4.25}
    \begin{split}
        b_1&= \dfrac{\cos(\b)\phi_{1+m_0}(b)-\sin(\b)\phi^{[1]}_{1+m_0}(b)}{\cos(\b)\phi_{m_0}(b)-\sin(\b)\phi^{[1]}_{m_0}(b)},\\
        b_j&= \dfrac{\cos(\b)\phi_{j+m_0}(b)-\sin(\b)\phi^{[1]}_{j+m_0}(b)}{\cos(\b)\phi_{m_0}(b)-\sin(\b)\phi^{[1]}_{m_0}(b)}\\
        & \quad -\sum_{\ell=1}^{j-1} [\ell/j] 
        \dfrac{\cos(\b)\phi_{j-\ell+m_0}(b)-\sin(\b)\phi^{[1]}_{j-\ell+m_0}(b)}{\cos(\b)\phi_{m_0}(b)-\sin(\b)\phi^{[1]}_{m_0}(b)}b_{\ell},\quad j \in \bbN, \; j\geq 2.
    \end{split}
\end{align}
In particular, if zero is not an eigenvalue of $T_{0,\b}$, then
\begin{align}
    \text{\rm tr}_{L^2_r((a,b))}\big(T^{-1}_{0,\b}\big)=\zeta(1;T_{0,\b})=-\dfrac{\cos(\b)\phi_{1}(b)-\sin(\b)\phi^{[1]}_{1}(b)}{\cos(\b)\phi_{0}(b)-\sin(\b)\phi^{[1]}_{0}(b)}.
\end{align}
\end{corollary}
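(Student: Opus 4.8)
The plan is to specialize Theorem~\ref{t4.2} to the single parameter choice $\alpha = 0$, which encodes precisely the Dirichlet boundary condition $g(a) = 0$ at the left endpoint, as recorded among the special cases in Theorem~\ref{t2.2}\,$(i)$. This is the exact analogue of the proof of Corollary~\ref{c4.3}, except that there one sets both $\alpha = \beta = 0$, whereas here $\beta$ is left free.

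First I would substitute $\cos(\alpha) = 1$ and $\sin(\alpha) = 0$ into the coefficient formulas of Theorem~\ref{t4.2}. The key observation is that in the numerators and common denominator of \eqref{4.15}, every occurrence of the $\theta$- and $\theta^{[1]}$-terms appears multiplied by the factor $\sin(\alpha)$, while the $\phi$- and $\phi^{[1]}$-terms are multiplied by $\cos(\alpha)$. Hence, upon setting $\alpha = 0$, the entire $\theta$-contribution vanishes identically and each bracketed expression collapses to $\cos(\beta)\,\phi_{\bullet}(b) - \sin(\beta)\,\phi^{[1]}_{\bullet}(b)$ evaluated at the appropriate index $\bullet \in \{m_0, j-\ell+m_0, j+m_0\}$.

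This substitution reduces the recursion \eqref{4.15} directly to the stated formulas \eqref{4.25} for both values $m_0 = 0$ and $m_0 = 1$, and the claimed trace identity then follows by reading off the $j = 1$, $m_0 = 0$ specialization together with \eqref{4.4} of Theorem~\ref{t4.1}. The computation is entirely mechanical, and I do not anticipate any genuine obstacle: the only point requiring care is confirming that $\alpha = 0$ indeed corresponds to the Dirichlet condition at $a$ (already noted in the discussion preceding Corollary~\ref{c4.3}) and that the reduction is consistent across both cases of the zero-eigenvalue multiplicity $m_0$. Thus the proof amounts to the single line: set $\alpha = 0$ in Theorem~\ref{t4.2}.
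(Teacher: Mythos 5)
Your proposal is correct and coincides with the paper's own proof, which is precisely the one-line specialization ``take $\a=0$ in Theorem \ref{t4.2}'': setting $\cos(\a)=1$, $\sin(\a)=0$ eliminates all $\theta$- and $\theta^{[1]}$-contributions in \eqref{4.15}, yielding \eqref{4.25} and the trace formula. Your additional verification of the Dirichlet interpretation of $\a=0$ and the consistency across $m_0=0,1$ is sound but not needed beyond what the paper records.
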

\begin{proof}
Take $\a=0$ in Theorem \ref{t4.2}.
\end{proof}

\begin{corollary}[Dirichlet boundary condition at $b$]\lb{c4.5}
Assume Hypothesis \ref{h2.1}, consider $T_{\a,0}$ as described in Theorem \ref{t2.2}\,$(i)$, and let 
$m_0=0,1$, denote the multiplicity of zero as an eigenvalue of $T_{\a,0}$. Then, 
\begin{align}\lb{4.27}
    \zeta(n;T_{\a,0})=-\Res \left[ z^{-n}\dfrac{d}{dz}\ln (F_{\a,0}(z));\ z=0 \right]=-n \, b_n, \quad n \in \bbN, 
\end{align}
where
\begin{align}\lb{4.28}
    \begin{split}
        b_1&= \dfrac{\cos(\a)\phi_{1+m_0}(b)-\sin(\a)\theta_{1+m_0}(b)}{\cos(\a)\phi_{m_0}(b)-\sin(\a)\theta_{m_0}(b)},\\
        b_j&= \dfrac{\cos(\a)\phi_{j+m_0}(b)-\sin(\a)\theta_{j+m_0}(b)}{\cos(\a)\phi_{m_0}(b)-\sin(\a)\theta_{m_0}(b)}\\
        & \quad -\sum_{\ell=1}^{j-1} [\ell/j]
        \dfrac{\cos(\a)\phi_{j-\ell+m_0}(b)-\sin(\a)\theta_{j-\ell+m_0}(b)}{\cos(\a)\phi_{m_0}(b)-\sin(\a)\theta_{m_0}(b)}b_{\ell},\quad j \in \bbN, \; j\geq 2.
    \end{split}
\end{align}
In particular, if zero is not an eigenvalue of $T_{\a,0}$, then
\begin{align}
    \text{\rm tr}_{L^2_r((a,b))}\big(T^{-1}_{\a,0}\big)=\zeta(1;T_{\a,0})=-\dfrac{\cos(\a)\phi_{1}(b)-\sin(\a)\theta_{1}(b)}{\cos(\a)\phi_{0}(b)-\sin(\a)\theta_{0}(b)}.
\end{align}
\end{corollary}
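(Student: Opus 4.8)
The plan is to obtain this corollary as the special case $\b = 0$ of Theorem \ref{t4.2}, exactly as Corollaries \ref{c4.3} and \ref{c4.4} arise from the choices $\a = \b = 0$ and $\a = 0$, respectively. By the special-case discussion following Theorem \ref{t2.2}\,$(i)$, the condition $\b = 0$ reduces the separated boundary condition at $b$, namely $g(b)\cos(\b) - g^{[1]}(b)\sin(\b) = 0$, to $g(b) = 0$, which is precisely the Dirichlet condition at the right endpoint while leaving the angle $\a$ at $a$ unrestricted.

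First I would substitute $\b = 0$ into the series \eqref{4.17} for $F_{\a,\b}(z)$: the factors $\cos(\b) = 1$ and $\sin(\b) = 0$ cause the bracketed combinations $\cos(\b)\phi_m(b) - \sin(\b)\phi^{[1]}_m(b)$ and $\cos(\b)\theta_m(b) - \sin(\b)\theta^{[1]}_m(b)$ to collapse to $\phi_m(b)$ and $\theta_m(b)$. Hence the expansion coefficients reduce to $a_k = \cos(\a)\phi_k(b) - \sin(\a)\theta_k(b)$, and the ratios $a_{j-\ell+m_0}/a_{m_0}$ entering the recursion \eqref{4.3} of Theorem \ref{t4.1} become exactly the quotients displayed in \eqref{4.28}. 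The recursion \eqref{4.14}, \eqref{4.15} then specializes term by term to \eqref{4.27}, \eqref{4.28}, and the trace identity in the $m_0 = 0$ case follows from \eqref{4.4} in the same way.

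I expect no genuine obstacle here. All of the analytic substance — the Laurent expansion of $z^{-n}(d/dz)\ln(F_{\a,\b}(z))$ near $z = 0$, the validity of the small-$z$ power series for $\phi$, $\theta$ and their quasi-derivatives established in Section \ref{s3.1}, and the residue evaluation \eqref{4.2} — has already been carried out in Theorems \ref{t4.1} and \ref{t4.2}. What remains is the purely \emph{algebraic} specialization $\b = 0$, which is immediate; so the proof is in effect the single line ``Take $\b = 0$ in Theorem \ref{t4.2}.''
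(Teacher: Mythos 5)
Your proposal is correct and coincides with the paper's own proof, which is precisely the one-line specialization ``Take $\b=0$ in Theorem \ref{t4.2}.'' The additional detail you give — checking that $\cos(\b)=1$, $\sin(\b)=0$ collapses the coefficients in \eqref{4.17} to $a_k=\cos(\a)\phi_k(b)-\sin(\a)\theta_k(b)$ — is exactly the routine verification the paper leaves implicit.
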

\begin{proof}
Take $\b=0$ in Theorem \ref{t4.2}.
\end{proof}

\begin{corollary}[Neumann boundary conditions]\lb{c4.6}
Assume Hypothesis \ref{h2.1}, consider $T_{\pi/2,\pi/2}$ as described in Theorem \ref{t2.2}\,$(i)$, and let $m_0=0,1$, denote the multiplicity of zero as an eigenvalue of $T_{\pi/2,\pi/2}$. Then, 
\begin{align}\lb{4.30}
    \zeta(n;T_{\pi/2,\pi/2})=-\Res \left[ z^{-n}\dfrac{d}{dz}\ln (F_{\pi/2,\pi/2}(z));\ z=0 \right]=-n \, b_n, \quad n \in \bbN, 
\end{align}
where
\begin{align}
&b_1= \theta^{[1]}_{1+m_0}(b)\big/\theta^{[1]}_{m_0}(b),\\
&b_j=\theta^{[1]}_{j+m_0}\big/(b)\theta^{[1]}_{m_0}(b) 
- \sum_{\ell=1}^{j-1} [\ell/j] \big[\theta^{[1]}_{j-\ell+m_0}(b)\big/\theta^{[1]}_{m_0}(b)\big] b_\ell, 
\quad j \in \bbN, \; j\geq 2.  \no 
\end{align}
In particular, if zero is not an eigenvalue of $T_{\pi/2,\pi/2}$, then
\begin{align}
    \text{\rm tr}_{L^2_r((a,b))}\big(T^{-1}_{\pi/2,\pi/2}\big)=\zeta(1;T_{\pi/2,\pi/2})=-\theta^{[1]}_{1}(b)\big/\theta^{[1]}_{0}(b).
\end{align}
\end{corollary}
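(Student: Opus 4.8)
The plan is to obtain this result as the special case $\a=\b=\pi/2$ of Theorem \ref{t4.2}, exactly paralleling the derivation of the Dirichlet Corollaries \ref{c4.3}--\ref{c4.5}. First I would record that Neumann boundary conditions correspond to $\a=\b=\pi/2$ in the separated framework of Theorem \ref{t2.2}\,$(i)$, so that $\cos(\a)=\cos(\b)=0$ and $\sin(\a)=\sin(\b)=1$.

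Substituting these values into the characteristic function \eqref{2.16}, every term carrying a factor $\cos(\a)$ or $\cos(\b)$ vanishes, and the surviving contribution collapses to $F_{\pi/2,\pi/2}(z)=\theta^{[1]}(z,b,a)$. Inserting the power series \eqref{3.11} for $\theta^{[1]}(z,\dott,a)$ then identifies the expansion \eqref{4.1} with coefficients $a_k=\theta^{[1]}_k(b)$, $k\in\bbN_0$. Applying Theorem \ref{t4.1} under this identification immediately yields \eqref{4.30} together with the recursion for $b_1$ and $b_j$ asserted in the corollary, since each ratio $a_{j+m_0}/a_{m_0}$ reduces to $\theta^{[1]}_{j+m_0}(b)/\theta^{[1]}_{m_0}(b)$. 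The trace formula is then the $n=1$ instance, read off as $-a_1/a_0$ via \eqref{4.4}.

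There is essentially no obstacle here; the only point worth a remark is that the denominator $\theta^{[1]}_{m_0}(b)=a_{m_0}$ is nonzero, which is precisely the standing hypothesis of Theorem \ref{t4.1} that $m_0$ equals the multiplicity of the zero eigenvalue—equivalently, that $F_{\pi/2,\pi/2}(z)=\theta^{[1]}(z,b,a)$ vanishes to order exactly $m_0$ at $z=0$. Since for separated boundary conditions zero is at most a simple eigenvalue, one has $m_0\in\{0,1\}$ throughout, in agreement with the hypothesis stated in the corollary.
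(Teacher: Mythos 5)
Your proposal is correct and coincides with the paper's own proof, which consists precisely of taking $\a=\b=\pi/2$ in Theorem \ref{t4.2}; your explicit reduction $F_{\pi/2,\pi/2}(z)=\theta^{[1]}(z,b,a)$ and the identification $a_k=\theta^{[1]}_k(b)$ in Theorem \ref{t4.1} is exactly what that specialization amounts to. The closing remark on $\theta^{[1]}_{m_0}(b)\neq 0$ being equivalent to $m_0$ matching the vanishing order of $F_{\pi/2,\pi/2}$ at $z=0$ is a correct and harmless addition.
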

\begin{proof}
Take $\a=\b=\pi/2$ in Theorem \ref{t4.2}.
\end{proof}

These are only a few of the most considered separated boundary conditions that have been singled out. One can also consider Neumann boundary conditions at only one endpoint, or any other combination of separated boundary conditions, by referring back to Theorem \ref{t4.2} with the appropriate values chosen for $\a,\b\in[0,\pi)$.

\subsection{Computing Spectral Zeta Function Values and Traces for Coupled Boundary Conditions} \lb{s4.2}
\hfill

We now apply Theorem \ref{t4.1} to find values of $\zeta(n;T_{\varphi,R})$ when imposing coupled boundary conditions. Notice that according to \cite{FGKLNS20}, zero is an eigenvalue of multiplicity 2 only for the Krein--von Neumann extension.

\begin{theorem}\lb{t4.7}
Assume Hypothesis \ref{h2.1}, consider $T_{\varphi,R}$ as described in Theorem \ref{t2.2}\,$(ii)$, and let $m_0=0,1$, denote the multiplicity of zero as an eigenvalue of $T_{\varphi,R}$. Then,
\begin{align}\lb{4.33}
    \zeta(n;T_{\varphi,R})=-\Res \left[ z^{-n}\dfrac{d}{dz}\ln (F_{\varphi,R}(z));\ z=0 \right]=-n \, b_n, 
    \quad n \in \bbN, 
\end{align}
where for $m_0=0$,
\begin{align}
        &b_1=\dfrac{e^{i\varphi}\big(R_{12}\theta_1^{[1]}(b)-R_{22}\theta_1(b)+R_{21}\phi_1(b)-R_{11}\phi_1^{[1]}(b)\big)}{e^{i\varphi}\big(R_{12}\theta_0^{[1]}(b)-R_{22}\theta_0(b)+R_{21}\phi_0(b)-R_{11}\phi_0^{[1]}(b)\big)+e^{2i\varphi}+1},    \no \\
        &b_j=\dfrac{e^{i\varphi}\big(R_{12}\theta_{j}^{[1]}(b)-R_{22}\theta_{j}(b)+R_{21}\phi_{j}(b)-R_{11}\phi_{j}^{[1]}(b)\big)}{e^{i\varphi}\big(R_{12}\theta_0^{[1]}(b)-R_{22}\theta_0(b)+R_{21}\phi_0(b)-R_{11}\phi_0^{[1]}(b)\big)+e^{2i\varphi}+1}  
        \lb{4.34} \\
        & \qquad -\displaystyle\sum_{\ell=1}^{j-1} \f{\ell}{j}    
  \dfrac{e^{i\varphi}\big(R_{12}\theta_{j-\ell}^{[1]}(b)-R_{22}\theta_{j-\ell}(b)+R_{21}\phi_{j-\ell}(b)-R_{11}\phi_{j-\ell}^{[1]}(b)\big)}{e^{i\varphi}\big(R_{12}\theta_0^{[1]}(b)-R_{22}\theta_0(b)+R_{21}\phi_0(b)-R_{11}\phi_0^{[1]}(b)\big)+e^{2i\varphi}+1}b_{\ell},     \no \\
        &\hspace{9.8cm} j \in \bbN, \; j\geq 2,   \no
\end{align}
and for $m_0=1$,
\begin{align}
        &b_1=\dfrac{e^{i\varphi}\big(R_{12}\theta_{2}^{[1]}(b)-R_{22}\theta_{2}(b)+R_{21}\phi_{2}(b)-R_{11}\phi_{2}^{[1]}(b)\big)}{e^{i\varphi}\big(R_{12}\theta_1^{[1]}(b)-R_{22}\theta_1(b)+R_{21}\phi_1(b)-R_{11}\phi_1^{[1]}(b)\big)},     \no \\
        &b_j=\dfrac{e^{i\varphi}\big(R_{12}\theta_{j+1}^{[1]}(b)-R_{22}\theta_{j+1}(b)+R_{21}\phi_{j+1}(b)-R_{11}\phi_{j+1}^{[1]}(b)\big)}{e^{i\varphi}\big(R_{12}\theta_1^{[1]}(b)-R_{22}\theta_1(b)+R_{21}\phi_1(b)-R_{11}\phi_1^{[1]}(b)\big)}   
        \lb{4.35} \\
        & \qquad -\displaystyle\sum_{\ell=1}^{j-1} \f{\ell}{j}  
        \dfrac{e^{i\varphi}\big(R_{12}\theta_{j-\ell+1}^{[1]}(b)-R_{22}\theta_{j-\ell+1}(b)+R_{21}\phi_{j-\ell+1}(b)-R_{11}\phi_{j-\ell+1}^{[1]}(b)\big)}{e^{i\varphi}\big(R_{12}\theta_1^{[1]}(b)-R_{22}\theta_1(b)+R_{21}\phi_1(b)-R_{11}\phi_1^{[1]}(b)\big)} b_{\ell},     \no \\
        &\hspace{10.55cm} j \in \bbN, \; j\geq 2.    \no 
\end{align}
In particular, if zero is not an eigenvalue of $T_{\varphi,R}$, then
\begin{align}
\begin{split}
& \text{\rm tr}_{L^2_r((a,b))}\big(T_{\varphi,R}^{-1}\big) = \zeta(1;T_{\varphi,R})\\
& \quad 
=\dfrac{-e^{i\varphi}\big(R_{12}\theta_1^{[1]}(b)-R_{22}\theta_1(b)+R_{21}\phi_1(b)-R_{11}\phi_1^{[1]}(b)\big)}{e^{i\varphi}\big(R_{12}\theta_0^{[1]}(b)-R_{22}\theta_0(b)+R_{21}\phi_0(b)-R_{11}\phi_0^{[1]}(b)\big)+e^{2i\varphi}+1}.
\end{split}
\end{align}
\end{theorem}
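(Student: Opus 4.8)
The plan is to treat this as a direct application of Theorem \ref{t4.1}, following the same template as the proof of Theorem \ref{t4.2} for separated boundary conditions. The whole task reduces to extracting the Taylor coefficients $a_m$ of $F_{\varphi,R}(z)$ about $z=0$ and substituting them into the recursion \eqref{4.3}.

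First I would insert the power series \eqref{3.4}, \eqref{3.6}, \eqref{3.8}, and \eqref{3.11} for $\phi$, $\theta$, $\phi^{[1]}$, and $\theta^{[1]}$ into the explicit representation \eqref{2.17} of $F_{\varphi,R}(z)$. Because \eqref{2.17} is a fixed $\bbC$-linear combination of these four solutions evaluated at $x=b$, augmented by the additive constant $e^{2i\varphi}+1$, this produces at once the convergent expansion $F_{\varphi,R}(z)=\sum_{m=0}^\infty a_m z^m$ with
\begin{align*}
a_0 &= e^{i\varphi}\big(R_{12}\theta_0^{[1]}(b)-R_{22}\theta_0(b)+R_{21}\phi_0(b)-R_{11}\phi_0^{[1]}(b)\big)+e^{2i\varphi}+1,\\
a_m &= e^{i\varphi}\big(R_{12}\theta_m^{[1]}(b)-R_{22}\theta_m(b)+R_{21}\phi_m(b)-R_{11}\phi_m^{[1]}(b)\big),\quad m\in\bbN.
\end{align*}
The structural feature on which the rest of the argument hinges is that the boundary-condition constant $e^{2i\varphi}+1$ enters only $a_0$ and none of the higher coefficients.

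Second, I would invoke Theorem \ref{t4.1} with these $a_m$, distinguishing the two admissible values of $m_0$. When $m_0=0$ (zero not an eigenvalue), $a_0\neq 0$ occupies every denominator in \eqref{4.3}, and since $a_0$ alone carries the constant $e^{2i\varphi}+1$, that constant appears in all denominators but in no numerator (the numerators being built from $a_j$, $a_{j-\ell}$ with indices $\geq 1$); this reproduces \eqref{4.34}. When $m_0=1$ (zero a simple eigenvalue), the defining relation $a_0=0$ means the constant has been absorbed, while the coefficients actually entering \eqref{4.3} are $a_{m_0}=a_1$ and $a_{j+m_0}=a_{j+1}$, all with index $\geq 1$ and hence free of $e^{2i\varphi}+1$; this yields \eqref{4.35}. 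The trace identity is the case $n=1$ of the $m_0=0$ formula, combining $\zeta(1;T_{\varphi,R})=-b_1=-a_1/a_0$ with Theorem \ref{t2.4}.

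No genuine difficulty arises; the calculation is routine bookkeeping. The one point demanding care is tracking where the additive constant $e^{2i\varphi}+1$ lands, since its presence in $a_0$ alone is precisely what makes it visible in the $m_0=0$ formulas yet absent from the $m_0=1$ formulas.
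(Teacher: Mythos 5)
Your proposal is correct and follows essentially the same route as the paper's own proof: substitute the expansions \eqref{3.4}, \eqref{3.6}, \eqref{3.8}, \eqref{3.11} into \eqref{2.17}, read off $a_0$ (which alone carries the constant $e^{2i\varphi}+1$) and $a_k$, $k\in\bbN$, and then apply Theorem \ref{t4.1} with $m_0=0$ or $m_0=1$. Your explicit remark on why the constant $e^{2i\varphi}+1$ appears in the $m_0=0$ formulas but not in the $m_0=1$ formulas is exactly the bookkeeping the paper leaves implicit.
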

\begin{proof}
Substituting \eqref{3.4}, \eqref{3.6}, \eqref{3.8}, and \eqref{3.11} into equation \eqref{2.17} yields
\begin{align}
F_{\varphi,R}(0)=e^{i\varphi}\big(R_{12}\theta_0^{[1]}(b)-R_{22}\theta_0(b)+R_{21}\phi_0(b)-R_{11}\phi_0^{[1]}(b)\big)+e^{2i\varphi}+1.
\end{align}
Thus, the coefficient of the $z^m$ term for $m\geq1$ in the series is given by
\begin{align}
e^{i\varphi}\big(R_{12}\theta_m^{[1]}(b)-R_{22}\theta_m(b)+R_{21}\phi_m(b)-R_{11}\phi_m^{[1]}(b)\big).
\end{align}
Hence, assertions \eqref{4.34} and \eqref{4.35} follow from Theorem \ref{t4.1} with
\begin{align}
    \begin{split}
        a_0&= e^{i\varphi}\big(R_{12}\theta_0^{[1]}(b)-R_{22}\theta_0(b)+R_{21}\phi_0(b)-R_{11}\phi_0^{[1]}(b)\big)+e^{2i\varphi}+1,\\
        a_k&=e^{i\varphi}\big(R_{12}\theta_k^{[1]}(b)-R_{22}\theta_k(b)+R_{21}\phi_k(b)-R_{11}\phi_k^{[1]}(b)\big),\quad k\in\N.
    \end{split}
\end{align}
\end{proof}

Next, we provide corollaries regarding the most common coupled boundary conditions, periodic and antiperiodic as well as the Krein-von Neumann extension.

\begin{corollary}[Periodic boundary conditions]
\lb{c4.8}
Assume Hypothesis \ref{h2.1}, consider $T_{0,I_2}$ as described in Theorem \ref{t2.2}\,$(ii)$, and let $m_0=0,1$, denote the multiplicity of zero as an eigenvalue of $T_{0,I_2}$. Then, 
\begin{align}\lb{4.40}
    \zeta(n;T_{0,I_2})=-\Res \left[ z^{-n}\dfrac{d}{dz}\ln (F_{0,I_2}(z));\ z=0 \right]=-n \, b_n, \quad n \in \bbN, 
\end{align}
where for $m_0=0$,
\begin{align}\lb{4.41}
    b_1&=\big[-\theta_1(b)-\phi^{[1]}_1(b)\big] \big/ \big[-\theta_0(b)-\phi^{[1]}_0(b)+2\big],    \\
    b_j&=\dfrac{-\theta_j(b)-\phi^{[1]}_j(b)}{-\theta_0(b)-\phi^{[1]}_0(b)+2}-\sum_{\ell=1}^{j-1} \frac{\ell}{j} \, 
    \dfrac{-\theta_{j-\ell}(b)-\phi^{[1]}_{j-\ell}(b)}{-\theta_0(b)-\phi^{[1]}_0(b)+2}b_{\ell}, 
    \quad j \in \bbN, \; j\geq 2,   \no 
\end{align}
and for $m_0=1$,
\begin{align}\lb{4.42}
    b_1&=  \big[\theta_{2}(b)+\phi^{[1]}_{2}(b)\big] \big/ \big[\theta_{1}(b)+\phi^{[1]}_{1}(b)\big],    \\
    b_j&= \dfrac{\theta_{j+1}(b)+\phi^{[1]}_{j+1}(b)}{\theta_{1}(b)+\phi^{[1]}_{1}(b)}-\sum_{\ell=1}^{j-1} 
    \frac{\ell}{j} \, 
    \dfrac{\theta_{j-\ell+1}(b)+\phi^{[1]}_{j-\ell+1}(b)}{\theta_{1}(b)+\phi^{[1]}_{1}(b)} b_{\ell},\quad j \in \bbN, \; j\geq 2.   \no
\end{align}
In particular, if zero is not an eigenvalue of $T_{0,I_2}$, then
\begin{align}
\text{\rm tr}_{L^2_r((a,b))}\big(T^{-1}_{0,I_2}\big)=\zeta(1;T_{0,I_2})
= \big[\theta_1(b)+\phi^{[1]}_1(b)\big] \big/ \big[-\theta_0(b)-\phi^{[1]}_0(b)+2\big].
\end{align}
\end{corollary}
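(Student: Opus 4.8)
The plan is to obtain Corollary~\ref{c4.8} as a direct specialization of Theorem~\ref{t4.7} to the parameter values $\varphi = 0$ and $R = I_2$, which by Theorem~\ref{t2.2}\,$(ii)$ encode precisely the periodic boundary conditions $g(b) = g(a)$, $g^{[1]}(b) = g^{[1]}(a)$. First I would substitute $R_{11} = R_{22} = 1$, $R_{12} = R_{21} = 0$, together with $e^{i\varphi} = e^{2i\varphi} = 1$, into the coefficients $a_0$ and $a_k$ appearing in the proof of Theorem~\ref{t4.7}. This collapses the generic expression $e^{i\varphi}\big(R_{12}\theta_m^{[1]}(b) - R_{22}\theta_m(b) + R_{21}\phi_m(b) - R_{11}\phi_m^{[1]}(b)\big)$ to $-\theta_m(b) - \phi_m^{[1]}(b)$, and the additive term $e^{2i\varphi} + 1$ to $2$.

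Carrying this through yields $a_0 = -\theta_0(b) - \phi_0^{[1]}(b) + 2$ and $a_k = -\theta_k(b) - \phi_k^{[1]}(b)$ for $k \in \bbN$. I would then feed these two expressions into the recursion \eqref{4.3} of Theorem~\ref{t4.1}, distinguishing the cases $m_0 = 0$ and $m_0 = 1$ exactly as in Theorem~\ref{t4.7}: for $m_0 = 0$ the denominator in each $b_j$ is $a_0 = -\theta_0(b)-\phi_0^{[1]}(b)+2$, whereas for $m_0 = 1$ the presence of the zero eigenvalue forces $a_0 = 0$, so the leading nonzero coefficient becomes $a_1 = -\theta_1(b)-\phi_1^{[1]}(b)$ and all indices shift up by one. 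This reproduces \eqref{4.41} and \eqref{4.42} verbatim, and the $n=1$ trace formula follows by reading off $-b_1 = -a_{1+m_0}/a_{m_0}$, noting the sign flip that turns $a_1 = -\theta_1(b)-\phi_1^{[1]}(b)$ into the stated numerator $\theta_1(b)+\phi_1^{[1]}(b)$.

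Since Theorem~\ref{t4.7} already supplies the full machinery, there is essentially no analytic obstacle here; the only point demanding care is the sign bookkeeping in the specialized coefficients---in particular tracking that both $-R_{22}\theta_m(b)$ and $-R_{11}\phi_m^{[1]}(b)$ contribute with a minus sign, so that the $+2$ in the denominator of the $m_0 = 0$ formulas originates from $e^{2i\varphi}+1$ rather than from the solution data. No new estimate, expansion, or result beyond Section~\ref{s4} is required.
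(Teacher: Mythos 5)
Your proposal is correct and is essentially identical to the paper's own proof, which simply takes $\varphi=0$ and $R=I_2$ in Theorem~\ref{t4.7}; your sign bookkeeping (the numerators $-\theta_m(b)-\phi_m^{[1]}(b)$, the $+2$ from $e^{2i\varphi}+1$, and the cancellation of minus signs in the $m_0=1$ case and in the trace formula) all check out. The extra step of tracing the coefficients $a_0$, $a_k$ back through Theorem~\ref{t4.1} is harmless but not needed, since Theorem~\ref{t4.7} already packages that recursion.
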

\begin{proof}
Take $\varphi=0$ and $R=I_2$ in Theorem \ref{t4.7}.
\end{proof}

\begin{corollary}[Antiperiodic boundary conditions]
\lb{c4.9}
Assume Hypothesis \ref{h2.1}, consider $T_{0,-I_2}$ as described in Theorem \ref{t2.2}\,$(ii)$, and let $m_0=0,1$, denote the multiplicity of zero as an eigenvalue of $T_{0,-I_2}$. Then,
\begin{align}\lb{4.44}
    \zeta(n;T_{0,-I_2})=-\Res \left[ z^{-n}\dfrac{d}{dz}\ln (F_{0,-I_2}(z));\ z=0 \right]=-n \, b_n, \quad n \in \bbN, 
\end{align}
where for $m_0=0$,
\begin{align}\lb{4.45}
    b_1&= \big[\theta_1(b)+\phi^{[1]}_1(b)\big] \big/ \big[\theta_0(b)+\phi^{[1]}_0(b)+2\big],   \\
    b_j&= \dfrac{\theta_j(b)+\phi^{[1]}_j(b)}{\theta_0(b)+\phi^{[1]}_0(b)+2}-\sum_{\ell=1}^{j-1}\frac{\ell}{j} \, \dfrac{\theta_{j-\ell}(b)+\phi^{[1]}_{j-\ell}(b)}{\theta_0(b)+\phi^{[1]}_0(b)+2}b_{\ell}, 
    \quad j \in \bbN, \; j\geq 2,    \no 
\end{align}
and for $m_0=1$,
\begin{align}\lb{4.46}
    b_1&= \big[\theta_{2}(b)+\phi^{[1]}_{2}(b)\big] \big/ \big[\theta_{1}(b)+\phi^{[1]}_{1}(b)\big],\\
    b_j&= \dfrac{\theta_{j+1}(b)+\phi^{[1]}_{j+1}(b)}{\theta_{1}(b)+\phi^{[1]}_{1}(b)}-\sum_{\ell=1}^{j-1}
    \frac{\ell}{j} \,
    \dfrac{\theta_{j-\ell+1}(b)+\phi^{[1]}_{j-\ell+1}(b)}{\theta_{1}(b)+\phi^{[1]}_{1}(b)} b_{\ell},\quad j \in \bbN, \; j\geq 2.   \no 
\end{align}
In particular, if zero is not an eigenvalue of $T_{0,-I_2}$, then
\begin{align}
\text{\rm tr}_{L^2_r((a,b))}\big(T^{-1}_{0,-I_2}\big)=\zeta(1;T_{0,-I_2}) 
=- \big[\theta_1(b)+\phi^{[1]}_1(b)\big] \big/ \big[\theta_0(b)+\phi^{[1]}_0(b)+2\big].
\end{align}
\end{corollary}
\begin{proof}
Take $\varphi= 0$ and $R= - I_2$ in Theorem \ref{t4.7}.
\end{proof}

\begin{corollary}[Krein-von Neumann extension]
\lb{c4.10}~Assume Hypothesis \ref{h2.1}, consider $T_{0,R_K}$ the Krein-von Neumann extension of $T_{min}$ with 
\begin{align}\lb{4.48}
\varphi = 0, \quad R_K=\begin{pmatrix}
\theta(0,b,a) & \phi(0,b,a)\\
\theta^{[1]}(0,b,a) & \phi^{[1]}(0,b,a)
\end{pmatrix},
\end{align}
and let $m_0=2$, denote the multiplicity of zero as an eigenvalue of $T_{0,R_K}$. Then,
\begin{align}\lb{4.49}
    \zeta(n;T_{0,R_K})=-\Res \left[ z^{-n}\dfrac{d}{dz}\ln (F_{0,R_K}(z));\ z=0 \right]=-n \, b_n, \quad n \in \bbN, 
\end{align}
where
\begin{align}
        &\no  b_1=\dfrac{\phi_0(b)\theta_{3}^{[1]}(b)-\phi_0^{[1]}(b)\theta_{3}(b)+\theta_0^{[1]}(b)\phi_{3}(b)-\theta_0(b)\phi_{3}^{[1]}(b)}{\phi_0(b)\theta_{2} ^{[1]}(b)-\phi_0^{[1]}(b)\theta_{2} (b)+\theta_0^{[1]}(b)\phi_{2}(b)-\theta_0(b)\phi_{2}^{[1]}(b)},\\
        \no \\
        &\no  b_j=\dfrac{\phi_0(b)\theta_{j+2}^{[1]}(b)-\phi_0^{[1]}(b)\theta_{j+2}(b)+\theta_0^{[1]}(b)\phi_{j+2}(b)-\theta_0(b)\phi_{j+2}^{[1]}(b)}{\phi_0(b)\theta_{2} ^{[1]}(b)-\phi_0^{[1]}(b)\theta_{2} (b)+\theta_0^{[1]}(b)\phi_{2}(b)-\theta_0(b)\phi_{2}^{[1]}(b)}\\
        &\no \resizebox{\textwidth}{!}{$\quad\quad\ \ -\displaystyle\sum_{\ell=1}^{j-1}\frac{\ell}{j} \, 
        \dfrac{\phi_0(b)\theta_{j-\ell+2}^{[1]}(b)-\phi_0^{[1]}(b)\theta_{j-\ell+2}(b)+\theta_0^{[1]}(b)\phi_{j-\ell+2}(b)-\theta_0(b)\phi_{j-\ell+2}^{[1]}(b)}{\phi_0(b)\theta_{2} ^{[1]}(b)-\phi_0^{[1]}(b)\theta_{2} (b)+\theta_0^{[1]}(b)\phi_{2}(b)-\theta_0(b)\phi_{2}^{[1]}(b)} b_{\ell},$}\\
        &\hspace{9cm} j \in \bbN, \; j\geq 2.\lb{4.50}
\end{align}
\end{corollary}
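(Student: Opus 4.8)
The plan is to apply Theorem~\ref{t4.1} directly with $m_0 = 2$, bypassing Theorem~\ref{t4.7} (whose hypotheses only cover $m_0 \in \{0,1\}$). The first step is to extract the Taylor coefficients $a_j$ in $F_{0,R_K}(z) = \sum_{j=0}^\infty a_j z^j$ and then substitute them into the recursion \eqref{4.3}.

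To obtain these coefficients I would insert $\varphi = 0$ together with the matrix entries $R_{K,11} = \theta_0(b)$, $R_{K,12} = \phi_0(b)$, $R_{K,21} = \theta^{[1]}_0(b)$, $R_{K,22} = \phi^{[1]}_0(b)$ from \eqref{4.48} into the coupled characteristic function \eqref{2.17}, obtaining
\[
F_{0,R_K}(z) = \phi_0(b)\,\theta^{[1]}(z,b,a) - \phi^{[1]}_0(b)\,\theta(z,b,a) + \theta^{[1]}_0(b)\,\phi(z,b,a) - \theta_0(b)\,\phi^{[1]}(z,b,a) + 2
\]
(equivalently one may start from the explicit $F_{0,R_K}$ displayed in Example~\ref{e3.3}). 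Substituting the power series \eqref{3.4}, \eqref{3.6}, \eqref{3.8}, \eqref{3.11} and collecting like powers of $z$, the coefficient of $z^m$ for $m \geq 1$ is
\[
a_m = \phi_0(b)\,\theta^{[1]}_m(b) - \phi^{[1]}_0(b)\,\theta_m(b) + \theta^{[1]}_0(b)\,\phi_m(b) - \theta_0(b)\,\phi^{[1]}_m(b).
\]

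Next I would pin down the order of vanishing at $z=0$. For the constant term the Wronskian normalization $W(\theta(0,\dott,a),\phi(0,\dott,a))(b) = 1$ collapses $a_0 = 2 - 2\big[\theta_0(b)\phi^{[1]}_0(b) - \theta^{[1]}_0(b)\phi_0(b)\big]$ to $a_0 = 0$, consistent with zero being an eigenvalue. Invoking \cite{FGKLNS20}, the zero eigenvalue of the Krein--von Neumann extension has multiplicity exactly $2$; since the multiplicity of any eigenvalue of $T_{A,B}$ equals the order of the corresponding zero of $F_{A,B}$, this guarantees $a_0 = a_1 = 0$ and $a_2 \neq 0$, i.e.\ $m_0 = 2$.

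With $m_0 = 2$ fixed, the recursion \eqref{4.3} becomes $b_1 = a_3/a_2$ and $b_j = [a_{j+2}/a_2] - \sum_{\ell=1}^{j-1}[\ell/j][a_{j-\ell+2}/a_2]\,b_\ell$ for $j \geq 2$; inserting the formula for $a_m$ reproduces \eqref{4.50}, while \eqref{4.2} delivers \eqref{4.49}. The only genuine subtlety is the correct index shift by $m_0 = 2$ so that $a_2$ (not $a_0$) serves as the leading nonzero coefficient, together with the appeal to \cite{FGKLNS20} certifying $m_0 = 2$; everything else is the routine specialization of Theorem~\ref{t4.1} already carried out for $m_0 \in \{0,1\}$ in Theorem~\ref{t4.7}.
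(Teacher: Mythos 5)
Your proposal is correct and follows essentially the same route as the paper: the paper's proof likewise substitutes $\varphi=0$, $R=R_K$ into the characteristic function and runs the coefficients through the recursion of Theorem \ref{t4.1} with the index shift $m_0=2$ (its reference to Theorem \ref{t4.7} means exactly the coefficient identification $a_k = \phi_0(b)\theta_k^{[1]}(b)-\phi_0^{[1]}(b)\theta_k(b)+\theta_0^{[1]}(b)\phi_k(b)-\theta_0(b)\phi_k^{[1]}(b)$ carried out in that theorem's proof, not its displayed $m_0\in\{0,1\}$ formulas). Your explicit verification that $a_0=0$ via the Wronskian normalization, and the appeal to the multiplicity-two statement to get $a_1=0$, $a_2\neq 0$, simply spell out details the paper leaves implicit.
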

\begin{proof}
As shown in \cite[Example 3.3]{CGNZ14}, the resulting operator $T_{0,R_K}$ represents the Krein--von Neumann extension of $T_{min}$. Take $\varphi=0$ and $R=R_K$ (as defined by \eqref{4.48}) in Theorem \ref{t4.7}, denoting 
$\phi_0(b) = \phi(0,b,a)$, $\phi_0^{[1]}(b) = \phi^{[1]}(0,b,a)$, $\theta_0(b)= \theta(0,b,a)$, and 
$\theta_0^{[1]}(b) = \theta^{[1]}(0,b,a)$ as before, for simplicity.
\end{proof}

\section{Examples} \lb{s5}

In this section, we provide an array of examples illustrating our approach for computing spectral $\zeta$-function values of regular Schr\"odinger operators starting with the simplest case of $q=0$, then a positive (piecewise) constant potential, followed by a constant negative potential, and ending with the case of a linear potential. 

Throughout this section we suppose that 
\begin{equation}
p=r=1 \, \text{ a.e.~on } \, (a,b) 
\end{equation}
which leaves the potential coefficient $q \in L^1((a,b);dx)$, $q$ real-valued, and hence leaves us with the differential expression 
\begin{equation}
\tau = - \big(d^2/dx^2\big) + q(x), \quad x \in (a,b).
\end{equation}

\subsection{The Example \textit{q}=0} \lb{s5.1}  
\hfill

We start by providing examples for calculating spectral $\zeta$-function values for the simple case $q(x)=0$, $x\in(a,b)$, imposing various boundary conditions. In this case $\tau y=-y''=z y$ has the following linearly independent solutions,
\begin{align}
\lb{5.2}
    \phi(z,x,a)=z^{-1/2} \sin\big(z^{1/2} (x-a)\big), 
    \quad \theta(z,x,a)=\cos\big(z^{1/2} (x-a)\big),\quad z\in\C .
\end{align}
Hence, 
\begin{align}
\phi(z,b,a)& =\sum_{m=0}^\infty z^m\phi_m(b),\quad z\in\C, \quad 
\phi_k(b)=\dfrac{(-1)^{k}}{(2k+1)!}(b-a)^{2k+1}, \; k\in\N,  \no \\
\theta(z,b,a) &=\sum_{m=0}^\infty z^m\theta_m(b),\quad z\in\C, \quad 
\theta_k(b) = \dfrac{(-1)^{k}}{(2k)!}(b-a)^{2k}, \; k\in\N, \\
\phi'(z,b,a) &= \sum_{m=0}^\infty z^m\phi'_m(b),\quad z\in\C, \quad 
\phi'_k(b) =-\dfrac{(-1)^k}{(k+1)!}(b-a)^{k+1},\quad \ k\in\N,   \no \\
\theta'(z,b,a) &=\sum_{m=0}^\infty z^m\theta'_m(b),\quad z\in\C, \quad 
\theta'_k(b) =\dfrac{(-1)^{k}}{k!}(b-a)^k,\quad \ k\in\N. 
\end{align}

One can explicitly write the corresponding expressions for $F_{\a,\b}(z)$ and $F_{\varphi, R}(z)$ for this example to find for $\a,\b\in[0,\pi)$,
\begin{align}
&F_{\a,\b}(z) = \cos(\a)\big[-\sin(\b)\ \cos\big(z^{1/2} (b-a)\big)+\cos(\b) z^{-1/2} \sin\big(z^{1/2} (b-a)\big)\big]   \no \\
& \quad -\sin(\a)\big[\sin(\b)\ z^{1/2} \sin\big(z^{1/2} (b-a)\big)+\cos(\b)\ \cos\big(z^{1/2} (b-a)\big)\big],
\end{align}
and for $\varphi\in[0,\pi),\ R\in SL(2,\R)$,
\begin{align}
& F_{\varphi,R}(z) = e^{i\varphi}\big[-R_{12}z^{1/2} \sin\big(z^{1/2} (b-a)\big)-R_{22}\cos\big(z^{1/2} (b-a)\big)   \no \\
& \quad +R_{21} z^{-1/2} \sin\big(z^{1/2} (b-a)\big) -R_{11}\cos\big(z^{1/2} (b-a)\big)\big]+e^{2i\varphi}+1.
\end{align}

We provide an explicit expression for $\zeta(1;T_{A,B})$
since it only involves the first few coefficients of the small-$z$ expansion. In the case of separated boundary conditions 
one obtains
\begin{align}
\begin{split}
a_{0}&= \cos (\a ) ((b-a) \cos (\b )-\sin (\b ))-\sin (\a ) \cos (\b ),\\
a_{1}&= \cos (\a ) \left(\frac{1}{2} (b-a)^2 \sin (\b )-\frac{1}{6} (b-a)^3 \cos (\b )\right)\\
&\quad +\sin (\a ) \left(\frac{1}{2} (b-a)^2 \cos (\b )-(b-a) \sin (\b )\right),\\
a_{2}&= \sin (\a ) \left(\frac{1}{6} (b-a)^3 \sin (\b )-\frac{1}{24} (b-a)^4 \cos (\b )\right)\\
&\quad +\cos (\a ) \left(\frac{1}{120} (b-a)^5 \cos (\b )-\frac{1}{24} (b-a)^4 \sin (\b )\right).
\end{split}
\end{align}
If $T_{\a,\b}$ does not have a zero eigenvalue, then $a_{0}\neq 0$ and, hence, one finds from \eqref{4.4},
\begin{align}
&\text{\rm tr}_{L^2_r((0,b))}\big(T_{\a,\b}^{-1}\big)=\zeta(1;T_{\a,\b})=    \\
&\resizebox{\textwidth}{!}{$\dfrac{\cos (\a ) \left(3 (b-a)^2 \sin (\b )-(b-a)^3 \cos (\b )\right)+\sin (\a ) \left(3(b-a)^2 \cos (\b )-6(b-a) \sin (\b )\right)}{6\sin (\a ) \cos (\b )-6\cos (\a ) ((b-a) \cos (\b )-\sin (\b ))}.$} \no 
\end{align}
If, instead, $T_{\a,\b}$ has a zero eigenvalue then $a_{0}=0$ and one finds
\begin{align}
&\zeta(1;T_{\a,\b})=   \\
&\resizebox{\textwidth}{!}{$\dfrac{-\sin (\a ) \left(20 (b-a)^3 \sin (\b )-5 (b-a)^4 \cos (\b )\right)-\cos (\a ) \left( (b-a)^5 \cos (\b )-5 (b-a)^4 \sin (\b )\right)}{\cos (\a ) \left(60 (b-a)^2 \sin (\b )-20 (b-a)^3 \cos (\b )\right)+\sin (\a ) \left(60 (b-a)^2 \cos (\b )-120(b-a) \sin (\b )\right).}$}    \no 
\end{align} 

In the case of coupled boundary conditions one finds
\begin{align}
a_{0}&= e^{i \varphi } ((b-a)R_{21}-R_{11}-R_{22})+e^{2 i \varphi }+1,    \no \\
a_{1}&= e^{i \varphi } \left(-\frac{1}{6} (b-a)^3R_{21}+\frac{1}{2} (b-a)^2R_{11}+\frac{1}{2} (b-a)^2R_{22}+ (a-b)R_{12}\right),    \\
a_{2}&= e^{i \varphi } \left(\frac{1}{120} (b-a)^5R_{21}-\frac{1}{24}(b-a)^4R_{11}-\frac{1}{24} (b-a)^4R_{22}+\frac{1}{6} (b-a)^3R_{12}\right).   \no 
\end{align}
Once again, if zero is not an eigenvalue of $T_{\varphi,R}$, $a_{0}\neq0$ and one finds
\begin{align}
\begin{split} 
&\text{\rm tr}_{L^2_r((0,b))}\big(T_{\varphi,R}^{-1}\big) = \zeta(1;T_{\varphi,R})   \\ 
& \quad 
=\dfrac{e^{i \varphi } \left(R_{21} (b-a)^3-3(b-a)^2R_{11}-3 (b-a)^2R_{22}+6 (b-a)R_{12}\right)}{6e^{i \varphi } ((b-a)R_{21}-R_{11}-R_{22})+6e^{2 i \varphi }+6}.
\end{split} 
\end{align}
If, on the other hand, zero is an eigenvalue of $T_{\varphi,R}$ with multiplicity one, then $a_{0}=0$ and 
\begin{align}
\begin{split}
\zeta(1;T_{\varphi,R})=\dfrac{(b-a)^5R_{21}-5(b-a)^4R_{11}-5 (b-a)^4R_{22}+20 (b-a)^3R_{12}}{20 (b-a)^3R_{21}-60 (b-a)^2R_{11}-60 (b-a)^2R_{22}+ 120(b-a)R_{12}}.
\end{split}
\end{align}
If zero is an eigenvalue of $T_{\varphi,R}$ with multiplicity two, we refer to the Krein--von Neumann extension, see Example \ref{e5.5}.

Finally we give the form of the zeta regularized functional determinant for this example.
As $z\downarrow 0$, one obtains
\begin{align}\lb{5.13}
F_{\a,\b}(z)=(b-a)\cos(\a)\cos(\b)-\sin(\a+\b)+\Oh(z),
\end{align}
which implies that for particular values of $\a$ and $\b$ one finds a zero eigenvalue. 
For now we will assume that no zero eigenvalue is present and hence we consider the following set of parameters
\begin{align}
\cA=\{(\a,\b) \in (0,\pi) \times (0,\pi) \, | \, (b-a)\cos(\a)\cos(\b)-\sin(\a+\b)\neq 0\}.
\end{align} 
For $(\a,\b) \in \cA$ one infers, by construction, that $m_0=0$ and hence, $\sin(\a)\sin(\b)\neq 0$. The latter condition implies that in \eqref{3.85} one must set $k_0=-2$. By using \eqref{5.13}, one obtains
\begin{align}
\begin{split}
\zeta'(0;T_{\a,\b})&=-\ln\left(\left|\frac{2 \cF_{\a,\b}(0)}{\sin(\a)\sin(\b)}\right|\right)\\
&=-\ln\left(\left|\frac{2(b-a)\cos(\a)\cos(\b)-2\sin(\a+\b)}{\sin(\a)\sin(\b)}\right|\right),
\end{split}
\end{align}
which coincides with \cite[Eq. (3.72)]{GK19}.

Furthermore, as $z\downarrow 0$, one obtains
\begin{align}\lb{5.17}
F_{\varphi,R}(z)=e^{i\varphi}[(b-a)R_{21}-R_{11}-R_{22}]+e^{2i\varphi}+1+\Oh(z),
\end{align}
which implies that for particular choices of $\varphi$ and $R$ one finds a zero eigenvalue. 
For now we will assume that no zero eigenvalue is present and hence we consider the following set of parameters
\begin{align}
\cB=\{\varphi\in(0,\pi),R\in SL(2,\bbR) \, | \, e^{i\varphi}[(b-a)R_{21}-R_{11}-R_{22}]+e^{2i\varphi}+1\neq 0\}.
\end{align} 
For $(\varphi, R) \in \cB$ we have, by construction, that $m_0=0$. Making the additional assumption $R_{12}\neq0$ implies that in \eqref{3.85} one must set $k_0=-2$. By using \eqref{5.17}, one obtains
\begin{align}
\begin{split}
\zeta'(0;T_{\varphi,\wti{R}})&=-\ln\left(\left|2\cF_{\varphi,\wti{R}}(0)/R_{12}\right|\right)\\
&=-\ln\left(\left|\frac{2[(b-a)R_{21}-R_{11}-R_{22}]+4\cos(\varphi)}{R_{12}}\right|\right).
\end{split}
\end{align}
If $R_{12}=0$, then since $R\in SL(2,\bbR)$, by assumption $R_{11}\neq-R_{22}$ which implies that in \eqref{3.85} one must set $k_0=-1$. By once again using \eqref{5.17}, one obtains
\begin{align}
\begin{split}
\zeta'(0;T_{\varphi,\wti{R}})&=-\ln\left(\left|\frac{2\cF_{\varphi,\wti{R}}(0)}{R_{11}+R_{22}}\right|\right)\\
&=-\ln\left(\left|\frac{2[(b-a)R_{21}-R_{11}-R_{22}]+4\cos(\varphi)}{R_{11}+R_{22}}\right|\right).
\end{split}
\end{align}

The following examples, each with different boundary conditions, will illustrate how the main theorems and corollaries of the previous section can be used to effectively compute the spectral $\zeta$-function values of the operator, $T_{A,B}$, for $n\in\N$.

\begin{example}[Dirichlet boundary conditions]\lb{e5.1}
Consider the case $\a=\b=0$. Then the operator $T_{0,0}$ has eigenvalues and eigenfunctions given by
\begin{align}
    \lambda_k= k^2\pi^2\big/(b-a)^2,\quad y_k(x)= \lambda_k^{-1/2}\sin\big(\lambda_k^{1/2}(x-a)\big),\quad k\in\N
\end{align}
$($in particular, $z=0$ is not an eigenvalue of $T_{0,0}$$)$, and
\begin{align}
F_{0,0}(z)= z^{-1/2} \sin\big(z^{1/2} (b-a)\big), \quad z\in\C.
\end{align}
Applying Corollary \ref{c4.3} with $m_0=0$ one finds for $n=1,2,3,4$,
\begin{align}
\begin{split}
&\zeta(1;T_{0,0})= (b-a)^2 \pi^{-2} \sum_{k=1}^\infty k^{-2} =\text{\rm tr}_{L^2_r((a,b))}\big(T^{-1}_{0,0}\big)=(b-a)^2/6,\\
&\zeta(2;T_{0,0})=(b-a)^4/90,\\
&\zeta(3;T_{0,0})=(b-a)^6/945,\\
&\zeta(4;T_{0,0})=(b-a)^8/9450.    \lb{5.21} 
\end{split}
\end{align}

Next, we explicitly compute the zeta regularized functional determinant with Dirichlet boundary conditions. Since no zero eigenvalue is present and $\Gamma_{0}=-(b-a)$, one obtains
\begin{align}
\zeta'(0;T_{0,0})=-\ln [2F_{0,0}(0)]=-\ln[2(b-a)].
\end{align}
\end{example}

One can corroborate the values found in Example \ref{e5.1} by utilizing the following relation of $\zeta(s;T_{0,0})$ with the Riemann $\zeta$-function (see, e.g., \cite{Ay74}, \cite{Di55} for some background)
\begin{align}
\zeta(s;T_{0,0})= (b-a)^{2s} \pi^{-2s} \zeta(2s),\quad \Re(s)> 1/2.
\end{align}
By using \cite[0.2333]{GR80}, the last expression allows us to find for $s=n\in\N$,
\begin{align}
\zeta(n;T_{0,0}) = 2^{2n-1}(b-a)^{2n} |B_{2n}|/[(2n)!], 
\end{align}
where $B_{2n}$ is the $2n$th Bernoulli number (cf. \cite[Ch. 23]{AS72}).

\begin{example}[Neumann boundary conditions]\lb{e5.2}
Consider the case $\a=\b=\pi/2$. Then the operator $T_{\pi/2,\pi/2}$ has eigenvalues and eigenfunctions given by
\begin{align}
    \lambda_k=k^2\pi^2/(b-a)^2,\quad y_k(x)=\cos\big(\lambda_k^{1/2}(x-a)\big),\quad k\in\N_0
\end{align}
$($in particular, $z=0$ is a simple eigenvalue of $T_{\pi/2,\pi/2}$$)$ and
\begin{align}
F_{\pi/2,\pi/2}(z)=-z^{1/2} \sin\big(z^{1/2} (b-a)\big),\quad z\in\C.
\end{align} 
Applying Corollary \ref{c4.6} with $m_0=1$ one finds for $n=1,2,3,4$,
\begin{align}
\begin{split}
&\zeta(1;T_{\pi/2,\pi/2})= (b-a)^2 \pi^{-2} \sum_{k=1}^\infty k^{-2}=(b-a)^2/6,\\
&\zeta(2;T_{\pi/2,\pi/2})= (b-a)^4/90,\\
&\zeta(3;T_{\pi/2,\pi/2})= (b-a)^6/945,\\
&\zeta(4;T_{\pi/2,\pi/2})= (b-a)^8/9450.    \lb{5.29}
\end{split}
\end{align}
\end{example}

Noting that the series expression for $\zeta(s;T_{\pi/2,\pi/2})$ in \eqref{2.39} sums only over non-zero eigenvalues, and that the eigenvalues for Dirichlet and Neumann boundary conditions only differ by zero being an eigenvalue for the latter, but not the former, the same expressions apply as in Example \ref{e5.1}, which is reflected in equations \eqref{5.21} and \eqref{5.29} yielding the same values. 

\begin{example}[Periodic boundary conditions]\lb{e5.3}
Consider the case $\varphi=0$, $R=I_2$. Then the operator $T_{0,I_2}$ has eigenvalues given by
\begin{align}
\lambda_k=(2k)^2\pi^2/(b-a)^2,\quad k\in\N_0.
\end{align}In particular, $z=0$ is a simple eigenvalue of $T_{0,I_2}$ and all other eigenvalues of $T_{0,I_2}$ are of multiplicity 2, and
\begin{align}
F_{0,I_2}(z)=-2\cos\big(z^{1/2} (b-a)\big)+2,\quad z\in\C.
\end{align}
Applying Corollary \ref{c4.8} with $m_0=1$ one finds for $n=1,2,3,4$,
\begin{align}
\begin{split}
&\zeta(1;T_{0,I_2})=2 (b-a)^2 \pi^{-2} \sum_{k=1}^\infty (2k)^{-2} = (b-a)^2/12,\\
&\zeta(2;T_{0,I_2})= (b-a)^4/720,\\
&\zeta(3;T_{0,I_2})= (b-a)^6/30240,\\
&\zeta(4;T_{0,I_2})=(b-a)^8/1209600.
\end{split}
\end{align}
\end{example}

Here, once again, one can verify the values found in Example \ref{e5.3} by utilizing the following relation of $\zeta(s;T_{0,I_2})$ with the Riemann $\zeta$-function,
\begin{align}
\zeta(s;T_{0,I_2})=2^{1-2s}\pi^{-2s} (b-a)^{2s}\zeta(2s),\quad \Re(s)>1/2.
\end{align}
By using \cite[0.2333]{GR80}, the last expression allows one to find for $s=n\in\N$,
\begin{align}
\zeta(n;T_{0,I_2})=(b-a)^{2n}|B_{2n}|/[(2n)!].
\end{align}

\begin{example}[Antiperiodic boundary conditions]\lb{e5.4}
Consider the case $\varphi= 0$, $R= - I_2$. Then the operator $T_{0,-I_2}$ has eigenvalues given by
\begin{align}
\lambda_k=(2k-1)^2\pi^2/(b-a)^2,\quad k\in\N.
\end{align}
In particular, $z=0$ is not an eigenvalue of $T_{0,-I_2}$ and all eigenvalues of $T_{0,-I_2}$ are of multiplicity 2, and
\begin{align}
F_{0,-I_2}(z)=2\cos\big(z^{1/2} (b-a)\big)+2,\quad z\in\C.
\end{align}
Applying Corollary \ref{c4.9} with $m_0=0$ one finds for $n=1,2,3,4$,
\begin{align}
\begin{split}
&\zeta(1;T_{0,-I_2})=2 (b-a)^2 \pi^{-2} \sum_{k=1}^\infty (2k-1)^{-2}
=\text{\rm tr}_{L^2_r((a,b))}\big(T^{-1}_{0,-I_2}\big)=(b-a)^2/4,\\
&\zeta(2;T_{0,-I_2})= (b-a)^4/48,\\
&\zeta(3;T_{0,-I_2})=(b-a)^6/480,\\
&\zeta(4;T_{0,-I_2})= [17/80640] (b-a)^8.
\end{split}
\end{align}
\end{example}

One can verify the values found in Example \ref{e5.4} by utilizing the following relation,
\begin{align}
& \zeta(s;T_{0,-I_2})=2(b-a)^{2s}\pi^{-2s} \sum_{k\in\N}(2k-1)^{-2s}=\big(1-2^{-2s}\big) 2(b-a)^{2s}\pi^{-2s} \zeta(2s),    \no \\ 
& \hspace*{9cm} \Re(s)>1/2,
\end{align}
which in turn by using either \cite[0.2335]{GR80} on the first equality or \cite[0.2333]{GR80} on the second allows one to find for $s=n\in\N$,
\begin{align}
\zeta(n;T_{0,-I_2})= (2^{2n}-1)(b-a)^{2n} |B_{2n}|/[(2n)!].
\end{align}

\begin{example}[Krein--von Neumann boundary conditions]\lb{e5.5}
Consider the case $\varphi=0$, $R=R_K$, with
\begin{align}\lb{5.40}
R_K=\begin{pmatrix}
\theta(0,b,a) & \phi(0,b,a)\\
\theta^{[1]}(0,b,a) & \phi^{[1]}(0,b,a)
\end{pmatrix}=\begin{pmatrix}
1 & b-a\\0 & 1
\end{pmatrix}.
\end{align}
As shown in \cite[Example 3.3]{CGNZ14}, the resulting operator $T_{0,R_K}$ represents the Krein--von Neumann extension of $T_{min}$. For more on the Krein--von Neumann extension, including an extensive discussion of eigenvalues and eigenfunctions, see \cite{AS80} or \cite{AGMT10}. From \eqref{2.17} with $\varphi=0$, $R=R_K$ defined as in \eqref{5.40},
\begin{align}\lb{5.41}
F_{0,R_K}(z)=(a-b)z^{1/2} \sin\big(z^{1/2} (b-a)\big)-2\cos\big(z^{1/2} (b-a)\big)+2,\quad z\in\C.
\end{align}
Using the series expansions in \eqref{5.41}, one finds
\begin{align}
F_{0,R_K}(z)\underset{z\downarrow0}{=} \big[(b-a)^4/12\big] z^2+\Oh\big(z^3\big),
\end{align}
so that $z=0$ is a zero of multiplicity two of $F_{0,R_K}(z)$ and hence an eigenvalue of multiplicity two of $T_{0,R_K}$ $($coinciding with what was found in \cite{AGMT10} and noted in \cite[Example 3.7]{GK19}$)$. Applying Corollary \ref{c4.10} with $m_0=2$ gives
\begin{align}
\begin{split}
\zeta(1;T_{0,R_K})&= (b-a)^2/15,\\
\zeta(2;T_{0,R_K})&= [11/12600] (b-a)^4,\\
\zeta(3;T_{0,R_K})&= (b-a)^6/54000,\\
\zeta(4;T_{0,R_K})&=[457/317520000] (b-a)^8.
\end{split}
\end{align}
\end{example}

\subsection{Examples of Nonnegative (Piecewise) Constant Potentials} \lb{s5.2}
\hfill

Next we provide examples for calculating spectral $\zeta$-function values considering a positive (piecewise) constant  potential $q$, imposing Dirichlet boundary conditions. 

\begin{example}\lb{e5.6}
Let $V_0 \in (0,\infty)$, consider $q(x) = V_0$, $x\in(a,b)$, and denote by $T_{0,0}$ the associated 
Schr\"odinger operator with Dirichlet boundary conditions at $a$ and $b$ 
$($i.e., $\a=\b=0$$)$. Then,
\begin{align}
\begin{split}\lb{5.46}
& \phi(z,x,a)= (z-V_0)^{-1/2} \sin\big((z-V_0)^{1/2}(x-a)\big),     \\ 
& \theta(z,x,a)=\cos\big((z-V_0)^{1/2}(x-a)\big),\quad x \in (a,b), \;  z\in\C .
\end{split}
\end{align}
Furthermore, the eigenvalues and eigenfunctions for $T_{0,0}$ with $q(x)=V_0>0,\ x\in(a,b),$ are given by
\begin{align}
\begin{split} 
& \lambda_k= k^2\pi^2/(b-a)^{-2}+V_0,     \\
& y_k(x) = (\lambda_k-V_0)^{-1/2} \sin\big((\lambda_k-V_0)^{1/2}(x-a)\big),\quad k\in\N
\end{split} 
\end{align}
$($in particular, $z=0$ is not an eigenvalue of $T_{0,0}$$)$, and
\begin{align}
F_{0,0}(z)= (z-V_0)^{-1/2} \sin\big((z-V_0)^{1/2}(b-a)\big),\quad z\in\C.
\end{align}
Applying Corollary \ref{c4.3} with $m_0=0$ one finds for $n=1,2,3$ $($the expression for $n=4$ is significantly longer and hence is omitted here$)$,
\begin{align}
\no  &\zeta(1;T_{0,0})=\sum_{k=1}^\infty \left[\frac{k^2\pi^2}{(b-a)^2}+V_0\right]^{-1}=\text{\rm tr}_{L^2_r((a,b))}\big(T^{-1}_{0,0}\big)\\
\no  &\hspace{1.38cm}= \big[V_0^{1/2}(b-a)\coth\big(V_0^{1/2}(b-a)\big)-1\big]\big/(2V_0),  \\
\no  &\resizebox{\textwidth}{!}{$\zeta(2;T_{0,0})=\dfrac{V_0^{1/2}(b-a)\sinh\big(2V_0^{1/2}(b-a)\big)-2\cosh\big(2V_0^{1/2}(b-a)\big)+2V_0(b-a)^2+2}{8V_0^2\sinh^2\big(V_0^{1/2}(b-a)\big)},$}\\
\no  &\zeta(3;T_{0,0})=\big(64V_0^3\sinh^2\big(V_0^{1/2}(b-a)\big)\big)^{-1}\big[12V_0(b-a)^2- 16\cosh\big(2V_0^{1/2}(b-a)\big)\\
\no  &\hspace{1.38cm}\quad+16+V_0^{1/2}(b-a)\big(8a^2V_0-16abV_0+8b^2V_0-3\big)\coth\big(V_0^{1/2}(b-a)\big)\\
\no  &\hspace{1.38cm}\quad-3aV_0^{1/2}\cosh\big(3V_0^{1/2}(b-a)\big)\big(\sinh\big(V_0^{1/2}(b-a)\big)\big)^{-1}\\
&\hspace{1.38cm}\quad+3bV_0^{1/2}\cosh\big(3V_0^{1/2}(b-a)\big)\big(\sinh\big(V_0^{1/2}(b-a)\big)\big)^{-1}\big].\lb{5.49}
\end{align}
\end{example}

Taking the limit $V_0\downarrow0$ of \eqref{5.49} recovers the expressions in Example \ref{e5.1}.

\begin{remark}\lb{r5.7}
One can also verify the expressions found in Example \ref{e5.6} by means of the one-dimensional Epstein $\zeta$-function given by
\begin{align}
\zeta_E(s;m^2)=\sum_{k=-\infty}^\infty \big(k^2+m^2\big)^{-s}, \quad m^2\neq0, \; s > 1/2
\end{align}
(see, e.g., the classical sources \cite{Ep03}, \cite{Ep07}, \cite{Ka06}, and \cite[Sect. 1.1.3]{El12}, 
\cite[Sects.~1.2.2, 5.3.2]{EORBZ94}, \cite{Ki94}, \cite[Ch.~3, App.~A]{Ki02},  and the extensive list of references therein).
Now one finds that $\zeta(s;T_{0,0})$ in Example \ref{e5.6} can be written in the form 
\begin{align}\lb{5.51}
\begin{split}
\zeta(s;T_{0,0})&= \sum_{k=1}^\infty \left[\frac{k^2\pi^2}{(b-a)^2}+V_0\right]^{-s} 
= (b-a)^{2s}\pi^{-2s} \sum_{k=1}^\infty \left[k^2+m^2\right]^{-s}\\
&= 2^{-1} (b-a)^{2s} \pi^{-2s} \big[\zeta_E(s;m^2)-m^{-2s}\big], \quad s > 1/2, 
\end{split}
\end{align}
where
\begin{align}
m^2=(b-a)^2V_0 \pi^{-2}>0.    \lb{5.52} 
\end{align}
Then the following formula for the analytic continuation of $\zeta_E(s;m^2)$ in $s$ 
for $m\neq0,-1,-2,\ldots$ (see \cite[Sect. 4.1.1]{El12})
\begin{align}\lb{5.53}
\begin{split}
\zeta_E(s;m^2)=\pi^{1/2}\dfrac{\Gamma(s-\frac{1}{2})}{\Gamma(s)}m^{1-2s}+\dfrac{4\pi^s}{\Gamma(s)}m^{1/2-s}\sum_{n=1}^\infty n^{s-1/2}K_{s-1/2}(2\pi mn),\\
s\neq (1/2)-\ell,\ \ell\in\bbN_0,\ s \in\bbC,
\end{split} 
\end{align}
where $K_{\mu}(\dott)$ is the modified Bessel function of the second kind (see for example \cite[Chs. 9-10]{AS72}), can be used to explicitly verify the expressions found in Example \ref{e5.6}.

We verify the expressions for $\zeta(1;T_{0,0})$ and $\zeta(2;T_{0,0})$ next. From \eqref{5.53} one has, using the fact that $K_{1/2}(z)=\pi^{1/2}(2z)^{-1/2}e^{-z}$,
\begin{align}
\no  \zeta_E(1;m^2)&=\pi m^{-1}+4\pi m^{-1/2}\sum_{n=1}^\infty n^{1/2} \pi^{1/2}(4\pi mn)^{-1/2} e^{-2\pi mn}\\
\no  &=\pi m^{-1}+2\pi m^{-1}\sum_{n=1}^\infty e^{-2\pi mn} =\pi m^{-1}+2\pi m^{-1}\dfrac{1}{e^{2\pi m}-1}\\
&=\dfrac{\pi}{m}\coth(\pi m).   \lb{5.52a}
\end{align}
Thus, from \eqref{5.51} and \eqref{5.52} one obtains
\begin{align}
\no  \zeta(1;T_{0,0})&=\dfrac{(b-a)^{2}}{2\pi^{2}}\big(\zeta_E(1;m^2)-m^{-2}\big)=\dfrac{(b-a)^{2}}{2\pi^{2}}\bigg(\dfrac{\pi m\coth(\pi m)-1}{m^2}\bigg)\\
&=\big[V_0^{1/2}(b-a)\coth\big(V_0^{1/2}(b-a)\big)-1\big]\big/(2V_0),\lb{5.55}
\end{align}
in accordance with Example \ref{e5.6}.

Next we verify the expression for $\zeta(2;T_{0,0})$ by first noting that
\begin{align}
\dfrac{d}{dm}\big(\zeta_E(s;m^2)\big)=-2sm \zeta_E(s+1;m^2),
\end{align}
which implies the functional equation
\begin{align}
\zeta_E(s+1;m^2)=-\dfrac{1}{2sm}\dfrac{d}{dm}\big(\zeta_E(s;m^2)\big).  \lb{5.55a}
\end{align}
From \eqref{5.52a} and \eqref{5.55a} with $s=1$ one has
\begin{equation}
 \zeta_E(2;m^2) = - \dfrac{\pi}{2m}\dfrac{d}{dm}\bigg(\dfrac{\coth(\pi m)}{m}\bigg) 
=\dfrac{\pi\sinh(2\pi m)+2\pi^2 m}{4m^3\sinh^2(\pi m)}.
\end{equation}
Thus from \eqref{5.51} and \eqref{5.52} one obtains
\begin{align}
\no  &\zeta(2;T_{0,0})=\dfrac{(b-a)^{4}}{2\pi^{4}}\big(\zeta_E(2;m^2)-m^{-4}\big)\\
\no  &\hspace{1.38cm}=\dfrac{(b-a)^{4}}{2\pi^{4}}\bigg(\dfrac{\pi\sinh(2\pi m)+2\pi^2 m}{4m^3\sinh^2(\pi m)}-\dfrac{1}{m^4}\bigg)\\
&\resizebox{\textwidth}{!}{$\hspace{1.53cm}=\dfrac{V_0^{1/2}(b-a)\sinh\big(2V_0^{1/2}(b-a)\big)-2\cosh\big(2V_0^{1/2}(b-a)\big)+2V_0(b-a)^2+2}{8V_0^2\sinh^2\big(V_0^{1/2}(b-a)\big)},$}
\lb{5.58}
\end{align}
again in accordance with Example \ref{e5.6}. All other positive integer values can be found recursively by means of \eqref{5.52a} and the functional equation \eqref{5.55a}.\hfill$\diamond$
\end{remark}

Next, we turn to the case of a nonnegative piecewise constant potential (a potential well): 

\begin{example} \lb{e5.8} 
Let $c, d \in (a,b)$, $c < d$, $V_0 \in (0,\infty)$, consider 
\begin{align}\lb{5.60}
q(x)=\begin{cases}
0 & x\in (a,c),\\
V_0 & x\in (c,d),\\
0 & x\in (d,b),
\end{cases}  
\end{align}
and denote by $T_{0,0}$ the associated Schr\"odinger operator with Dirichlet boundary conditions at $a$ and $b$. Then, 
for $z \in \bbC$, 
\begin{align}
    \phi(z,x,a)&= z^{-1/2} \sin\big(z^{1/2} (x-a)\big),  \quad x \in (a,c),   \no \\ 
    \theta(z,x,a)&=\cos\big(z^{1/2} (x-a)\big),\quad x\in (a,c),   \no \\
    \phi(z,x,a)&= \cos\big(z^{1/2} (c-a)\big) (z-V_0)^{-1/2} \sin\big((z-V_0)^{1/2}(x-c)\big)   \no \\
    &\quad+z^{-1/2}\sin\big(z^{1/2} (c-a)\big)\cos\big((z-V_0)^{1/2}(x-c)\big), \quad x \in (c,d),   \no \\
    \theta(z,x,a)&= -z^{1/2} \sin\big(z^{1/2} (c-a)\big) (z-V_0)^{-1/2} \sin\big((z-V_0)^{1/2}(x-c)\big)   \no \\
    &\quad+\cos\big(z^{1/2} (c-a)\big)\cos\big((z-V_0)^{1/2}(x-c)\big),\quad x\in (c,d),  \no \\
 \phi(z,x,a)&= \bigg[\cos\big(z^{1/2} (c-a)\big)\cos\big((z-V_0)^{1/2}(d-c)\big)   \no \\
& \quad \;\; -(z-V_0)^{1/2} z^{-1/2} \sin\big(z^{1/2} (c-a)\big)\sin\big((z-V_0)^{1/2}(d-c)\big)\bigg]   \no \\
& \quad \times z^{-1/2} \sin\big(z^{1/2} (x-d)\big)    \lb{5.61} \\
&+\bigg[\cos\big(z^{1/2} (c-a)\big) (z-V_0)^{-1/2} \sin\big((z-V_0)^{1/2}(d-c)\big)   \no \\
&\quad \;\; + z^{-1/2} \sin\big(z^{1/2} (c-a)\big) \cos\big((z-V_0)^{1/2}(d-c)\big)\bigg]\cos\big(z^{1/2} (x-d)\big), \no \\
& \hspace*{8cm} x \in (d,b),   \no \\
\theta(z,x,a)&= -\bigg[z^{1/2} \sin\big(z^{1/2} (c-a)\big)\cos\big((z-V_0)^{1/2}(d-c)\big)   \no \\
&\qquad \; +(z-V_0)^{1/2}\cos\big(z^{1/2} (c-a)\big)\sin\big((z-V_0)^{1/2}(d-c)\big)\bigg]      \no \\
& \qquad \; \times z^{-1/2} \sin\big(z^{1/2} (x-d)\big)    \no \\
&\quad+\bigg[-z^{1/2} \sin\big(z^{1/2} (c-a)\big) (z-V_0)^{-1/2} \sin\big((z-V_0)^{1/2}(d-c)\big)   \no \\
&\qquad \; +\cos\big(z^{1/2} (c-a)\big)\cos\big((z-V_0)^{1/2}(d-c)\big)\bigg]\cos\big(z^{1/2} (x-d)\big),  \no \\
&\hspace{8cm} x\in (d,b).    \no
\end{align}

In particular, 
\begin{align}
\phi(z,b,a)=\sum_{m=0}^\infty z^m\phi_m(b),\quad z\in\C,
\end{align}
where
\begin{align}
\no  \phi_0(b) &= \left[\cosh\big(V_0^{1/2}(d-c)\big)+V_0^{1/2}(c-a)\sinh\big(V_0^{1/2}(d-c)\big)\right](b-d)\\
\no  & \quad +V_0^{-1/2} \sinh\big(V_0^{1/2}(d-c)\big)+(c-a)\cosh\big(V_0^{1/2}(d-c)\big),\\[1mm]
\phi_1(b) &= \big(6V_0^{3/2}\big)^{-1} \big\{3\big[\big(aV_0(c-d)-c^2V_0+cdV_0-1\big)\sinh\big(V_0^{1/2}(c-d)\big)\\
\no & \quad +V_0^{1/2}(c-d)\cosh\big(V_0^{1/2}(c-d)\big)\big] +V_0\big[\sinh\big(V_0^{1/2}(d-c)\big)(aV_0(b-d)\\
& \quad -bcV_0+cdV_0-3)+V_0^{1/2}(3a-b-3c+d)\cosh\big(V_0^{1/2}(d-c)\big)\big]  \no \\
& \qquad \times(b-d)^2\big[V_0^{1/2}\sinh\big(2V_0^{1/2}(d-c)\big)+\cosh\big(2V_0^{1/2}(d-c)\big)\big]  \no \\
& \quad +V_0^{3/2}(a-c)^3\big\},  \no  \\
\no & \text{etc.} 
\end{align}
By construction, $\phi(z,a,a)=0$, so eigenvalues are given by solving $\phi(z,b,a)=0$, or, equivalently, by solving
\begin{align}
&\tan\big(z^{1/2} (b-d)\big)    \\
&\resizebox{\textwidth}{!}{$\quad=\dfrac{-z\cos\big(z^{1/2} (c-a)\big)\sin\big((z-V_0)^{1/2}(d-c)\big)-\sqrt{z(z-V_0)}\sin\big(z^{1/2} (c-a)\big)\cos\big((z-V_0)^{1/2}(d-c)\big)}{\sqrt{z(z-V_0)}\cos\big(z^{1/2} (c-a)\big)\cos\big((z-V_0)^{1/2}(d-c)\big)-(z-V_0)\sin\big(z^{1/2} (c-a)\big)\sin\big((z-V_0)^{1/2}(d-c)\big)}.$}  \no 
\end{align}
From \eqref{2.16}, one has 
\begin{align}
\no  F_{0,0}(z)&= \bigg[\cos\big(z^{1/2} (c-a)\big)\cos\big((z-V_0)^{1/2}(d-c)\big)\\
\no  & \quad \;\; -(z-V_0)^{1/2}\dfrac{\sin\big(z^{1/2} (c-a)\big)}{z^{1/2} }\sin\big((z-V_0)^{1/2}(d-c)\big)\bigg]\dfrac{\sin\big(z^{1/2} (b-d)\big)}{z^{1/2} }\\
&\quad +\bigg[\cos\big(z^{1/2} (c-a)\big) (z-V_0)^{-1/2} \sin\big((z-V_0)^{1/2}(d-c)\big)\\
\no  &\qquad \;\; +\ z^{-1/2} \sin\big(z^{1/2} (c-a)\big) \cos\big((z-V_0)^{1/2}(d-c)\big)\bigg]\cos\big(z^{1/2} (b-d)\big),   \no \\
& \hspace*{10.35cm} z\in\C.   \no
\end{align}
Hence, applying Corollary \ref{c4.3} with $m_0=0$ one explicitly finds the sum of the inverse of these eigenvalues, namely
\begin{align} \lb{5.65}
&\zeta(1;T_{0,0})=\text{\rm tr}_{L^2_r((a,b))}\big(T^{-1}_{0,0}\big) = - \phi_1(b)/\phi_0(b)   \\
&\scalebox{.8}{$\quad=-\big\{6V_0\big[(V_0(c-a)(b-d)+1)\sinh\big(V_0^{1/2}(d-c)\big)-V_0^{1/2}(a-b-c+d)\cosh\big(V_0^{1/2}(d-c)\big)\big]\big\}^{-1}$}   \no \\
&\scalebox{.8}{$\quad\ \ \times \big\{3\big[\big(aV_0(c-d)-c^2V_0+cdV_0-1\big)\sinh\big(V_0^{1/2}(c-d)\big)+V_0^{1/2}(c-d)\cosh\big(V_0^{1/2}(c-d)\big)\big]$}  \no \\
&\scalebox{.8}{$\quad\ \ +V_0\big[\sinh\big(V_0^{1/2}(d-c)\big)(aV_0(b-d)-bcV_0+cdV_0-3)+V_0^{1/2}(3a-b-3c+d)\cosh\big(V_0^{1/2}(d-c)\big)\big]$}  \no \\
&\scalebox{.8}{$\quad\ \ \times(b-d)^2\big[V_0^{1/2}\sinh\big(2V_0^{1/2}(d-c)\big)+\cosh\big(2V_0^{1/2}(d-c)\big)\big]+V_0^{3/2}(a-c)^3\big\}.$}  \no
\end{align}
\end{example}

Taking the limits $c\downarrow a$ and $d\uparrow b$ of \eqref{5.65} recovers the expression in Example \ref{e5.6}. Furthermore, taking the limit $V_0\downarrow0$ recovers the same expression as in Example \ref{e5.1}. The expression for $n=2$ is significantly longer and hence it is omitted here.

\subsection{Example of a Negative Constant Potential} \lb{s5.3}
\hfill

Next, we derive spectral $\zeta$-function values for the case of a negative constant potential. This case is dealt with separately since the question as to whether $z=0$ is an eigenvalue of $T_{0,0}$ depends on the actual constant value of the potential.

\begin{example}\lb{e5.9}
Let $V_0 \in (0,\infty)$, consider $q(x) = - V_0$, $x\in(a,b)$, 
and denote by $T_{0,0}$ the associated Schr\"odinger operator with Dirichlet boundary conditions at $a$ and $b$. Then,
\begin{align}\lb{5.67}
\begin{split} 
&\phi(z,x,a)= (z+V_0)^{-1/2} \sin\big((z+V_0)^{1/2}(x-a)\big),      \\ 
& \theta(z,x,a)=\cos\big((z+V_0)^{1/2}(x-a)\big),\quad z\in\C .
\end{split} 
\end{align}
Furthermore, eigenvalues and eigenfunctions for $T_{0,0}$ with $q(x)=-V_0<0,\ x\in(a,b),$ are given by
\begin{align}
    \lambda_k=\dfrac{k^2\pi^2}{(b-a)^2}-V_0,\quad y_k(x)
    = (\lambda_k+V_0)^{-1/2} \sin\big((\lambda_k+V_0)^{1/2}(x-a)\big),\quad k\in\N,
\end{align}
where one notes that due to $q(x)=-V_0<0$, $z=0$ is an eigenvalue of $T_{0,0}$ for certain values of $V_0$. Specifically, if one has
\begin{align}
V_0= k^2\pi^2/(b-a)^2, \text{ for some }k\in\N,
\end{align}
then $z=0$ is a simple eigenvalue of $T_{0,0}$. Otherwise, $z=0$ is not an eigenvalue of $T_{0,0}$. 
Moreover,
\begin{align}
F_{0,0}(z)= (z+V_0)^{-1/2} \sin\big((z+V_0)^{1/2}(b-a)\big),\quad z\in\C.
\end{align}
Applying Corollary \ref{c4.3} with $m_0=0$ when $V_0\neq k^2\pi^2/(b-a)^2$, $k\in\N$, one finds for $n=1,2,3$ $($the expression for $n=4$ is significantly longer and hence is omitted here$)$,
\begin{align}
\no  &\zeta(1;T_{0,0})=\sum_{k=1}^\infty \left[\frac{k^2\pi^2}{(b-a)^2}-V_0\right]^{-1}=\text{\rm tr}_{L^2_r((a,b))}\big(T_{0,0}^{-1}\big)\\
\no  &\hspace{1.38cm}= \big[V_0^{1/2}(a-b)\cot\big(V_0^{1/2}(b-a)\big)+1\big]\big/(2V_0),\\
\no  &\resizebox{\textwidth}{!}{$\zeta(2;T_{0,0})=\dfrac{V_0^{1/2}(b-a)\sin\big(2V_0^{1/2}(b-a)\big)+2\cos\big(2V_0^{1/2}(b-a)\big)+2V_0(b-a)^2-2}{8V_0^2\sin^2\big(V_0^{1/2}(b-a)\big)},$}\\
\no  &\zeta(3;T_{0,0})=\big(64V_0^3\sin^2\big(V_0^{1/2}(b-a)\big)\big)^{-1}\big[-12V_0(b-a)^2-16\cos\big(2V_0^{1/2}(b-a)\big)\\
\no  &\hspace{1.38cm}\quad+16-V_0^{1/2}(b-a)\big(8a^2V_0-16abV_0+8b^2V_0-3\big)\cot\big(V_0^{1/2}(b-a)\big)\\
\no  &\hspace{1.38cm}\quad-3aV_0^{1/2}\cos\big(3V_0^{1/2}(b-a)\big)\big(\sin\big(V_0^{1/2}(b-a)\big)\big)^{-1}\\
&\hspace{1.38cm}\quad+3bV_0^{1/2}\cos\big(3V_0^{1/2}(b-a)\big)\big(\sin\big(V_0^{1/2}(b-a)\big)\big)^{-1}\big].
\lb{5.71}
\end{align}

When $V_0=k_0^2\pi^2/(b-a)^2$ for some $k_0\in\N$, applying Corollary \ref{c4.3} with $m_0=1$ one finds for $n=1,2$ $($the expressions for $n=3,4$ are significantly longer and hence are omitted here$)$,
\begin{align}
\no  \zeta(1;T_{0,0})&= \sum_{\underset{k\neq k_0}{k=1}}^\infty \left[\frac{k^2\pi^2}{(b-a)^2}-V_0\right]^{-1}=\frac{\pi^2}{(b-a)^2}\sum_{\underset{k\neq k_0}{k=1}}^\infty \big[k^2-k_0^2\big]^{-1}\\
\no  &= \dfrac{\big(V_0(b-a)^2-3\big)\sin\big(V_0^{1/2}(a-b)\big)+3V_0^{1/2}(a-b)\cos\big(V_0^{1/2}(a-b)\big)}{4V_0\big(\sin\big(V_0^{1/2}(b-a)\big)+V_0^{1/2}(a-b)\cos\big(V_0^{1/2}(b-a)\big)\big)},\\
\no  \zeta(2;T_{0,0})&= \dfrac{1}{24V_0^2\big(\sin\big(V_0^{1/2}(b-a)\big)+V_0^{1/2}(a-b)\cos\big(V_0^{1/2}(b-a)\big)\big)}\\
\no  & \quad \times \big\{2\big[3\big(5-2V_0(b-a)^2\big)\sin\big(V_0^{1/2}(a-b)\big)\\
\no  &\qquad-V_0^{1/2}(b-a)(V_0(b-a)^2-15)\cos\big(V_0^{1/2}(a-b)\big)\big]\\
\no  &\qquad+3\big(\sin\big(V_0^{1/2}(b-a)\big)\big)^{-1}\big[\big(V_0(b-a)^2-3)\sin\big(V_0^{1/2}(a-b)\big)\\
\no  &\qquad-3V_0^{1/2}(b-a)\cos\big(V_0^{1/2}(a-b)\big)\big]\\
&\qquad\quad\times\big[\sin\big(V_0^{1/2}(b-a)\big)-V_0^{1/2}(b-a)\cos\big(V_0^{1/2}(b-a)\big)\big]\big\}.
\end{align}
\end{example}

Taking the limit $V_0\downarrow0$ of \eqref{5.71} recovers the expressions in Example \ref{e5.1}.

\begin{remark} \lb{r5.10} 
In the case $z=0$ is not an eigenvalue, one can verify these results via the method outlined in Remark \ref{r5.7}. Namely, letting
\begin{align}
m^2=- (b-a)^2V_0\pi^{-2}<0
\end{align}
so that
\begin{align}
m= (i/\pi)(b-a)V_0^{1/2}
\end{align}
in \eqref{5.55} and \eqref{5.58}, one verifies the expressions for $n=1,2$ as before.\hfill$\diamond$
\end{remark}

\subsection{Example of a Linear Potential} \lb{s5.4}
\hfill

We finish with an example for calculating spectral $\zeta$-function values for the linear potential, 
$q(x)=x$, $x \in (a,b)$.

\begin{example}\lb{e5.11}
Consider $q(x) = x$, $x\in(a,b)$, 
and denote by $T_{0,0}$ the associated Schr\"odinger operator with Dirichlet boundary conditions at $a$ and $b$. Then, noting that $W(\Ai,\Bi)(x)=\pi^{-1}$ $($cf. \cite[Eq. 10.4.10]{AS72}$)$, one finds
\begin{align}\lb{5.76}
\phi(z,x,a)&=\pi [\Ai(a-z)\Bi(x-z)-\Bi(a-z)\Ai(x-z)],\\ 
\theta(z,x,a)&=-\pi [\Ai'(a-z)\Bi(x-z)-\Bi'(a-z)\Ai(x-z)],
\quad z\in\C,
\end{align}
where $\Ai(\dott)$ and $\Bi(\dott)$ represent the Airy functions of the first and second kind, respectively $($cf. \cite[Sect. 10.4]{AS72}$)$. In particular, substituting $z=0$ in \eqref{5.76} yields
\begin{align}
\phi_0(x)=\pi [\Ai(a)\Bi(x)-\Bi(a)\Ai(x)],\quad \theta_0(x)=-\pi [\Ai'(a)\Bi(x)-\Bi'(a)\Ai(x)],
\end{align}
and thus the Volterra Green's function becomes
\begin{align}
g(0,x,x')=\pi [\Ai(x)\Bi(x')-\Ai(x')\Bi(x)].
\end{align}
Hence,
\begin{align}
\phi(z,b,a)=\sum_{m=0}^\infty z^m\phi_m(b),\quad z\in\C,
\end{align}
where
\begin{align}
\no & \phi_0(b) = \pi [\Ai(a)\Bi(b)-\Bi(a)\Ai(b)],    \no \\
& \phi_1(b) = \pi^{2}\int_a^b dx_1\ [\Ai(b)\Bi(x_1)-\Ai(x_1)\Bi(b)][\Ai(a)\Bi(x_1)-\Bi(a)\Ai(x_1)]    \no \\
& \hspace*{.8cm} = \pi^2\big\{\Ai(a) \Ai(b) \big[\Bi'(a)^2-\Bi'(b)^2\big] + \Bi(a) \Bi(b)\big[\Ai'(a)^2-\Ai'(b)^2\big]\\
& \hspace*{1.2cm}  + [\Ai'(b)\Bi'(b)-\Ai'(a) \Bi'(a)][\Bi(a)\Ai(b)+\Ai(a)\Bi(b)]\big\}, \no \\
& \hspace*{1.2cm} \text{etc.}    \no 
\end{align}

Furthermore, one has by construction, $\phi(z,a,a)=0$, so eigenvalues are given by solving $\phi(z,b,a)=0$, or, equivalently, by solving $\Ai(a-z)\Bi(b-z)=\Bi(a-z)\Ai(b-z)$. In particular, the characteristic function is given by
\begin{align}
F_{0,0}(z)=\pi [\Ai(a-z)\Bi(b-z)-\Bi(a-z)\Ai(b-z)],\quad z\in\C.
\end{align}
If zero is not an eigenvalue, applying Corollary \ref{c4.3} with $m_0=0$ one does find the sum of the inverse of these eigenvalues, namely
\begin{align}
\no  \zeta(1;T_{0,0})&= \tr_{L_r^2((a,b))}\big(T_{0,0}^{-1}\big) = - \phi_1(b)/\phi_0(b)     \\
\no  &= \pi [\Bi(a)\Ai(b)-\Ai(a)\Bi(b)]^{-1}\big\{\Ai(a) \Ai(b) \big[\Bi'(a)^2-\Bi'(b)^2\big]\\
&\quad +\Bi(a) \Bi(b)\big[\Ai'(a)^2-\Ai'(b)^2\big]\\
\no  &\quad +[\Ai'(b)\Bi'(b)-\Ai'(a) \Bi'(a)][\Bi(a)\Ai(b)+\Ai(a)\Bi(b)]\big\}.
\end{align}
\end{example}


\medskip

\noindent 
{\bf Acknowledgments.}  We are indebted to Angelo Mingarelli for very helpful discussions. 

 
\end{document}